\newtheorem{thm}{Theorem}[section]
\newtheorem{lem}[thm]{Lemma}
\theoremstyle{definition}
\newcommand{\N}{\ensuremath{\mathbb N}}
\newcommand{\C}{\ensuremath{\mathbb{C}}}
\newcommand{\R}{\ensuremath{\mathbb{R}}}
\newcommand{\Prob}{\ensuremath{\mathbb{P}}}
\newcommand{\bo}{\ensuremath{\mathrm{O}}}
\newcommand{\Var}{\ensuremath{\mathrm{Var}}}
\newcommand{\Cov}{\ensuremath{\mathrm{Cov}}}
\newcommand{\Nat}{\ensuremath{\mathbb{N}}}
\newcommand{\Erw}[1]{\ensuremath{\mathbb{E}\!\left[#1\right]}}
\newcommand{\E}{\ensuremath{\mathbb{E}}}
\newcommand{\coleq}{\ensuremath{\mathop{:}\!\!=}}
\newsavebox{\smlmat}
\savebox{\smlmat}{$\left[\begin{smallmatrix}1 & 2 \\ 2 &1\end{smallmatrix} \right]$ }
\title{P\'olya urns via the contraction method}
\author{Margarete Knape and Ralph Neininger\\
Institute for Mathematics\\
J.W.~Goethe University\\
60054 Frankfurt a.M.\\
Germany\\ \\
Email: {\tt \{knape,neiningr\}@math.uni-frankfurt.de}}
\begin{document}

\maketitle

\begin{abstract}
  We propose an approach to analyze the asymptotic behavior of P\'olya
  urns based on the contraction method. For this, a new combinatorial
  discrete time embedding of the evolution of the
  urn into random rooted trees is developed. A decomposition of these trees
  leads to a system of recursive distributional equations which
  capture the distributions of the numbers of balls of each color.
  Ideas from the contraction method are used to study such systems of
  recursive distributional equations asymptotically. We apply our
  approach to a  couple of  concrete P\'olya urns that  lead to limit
  laws with normal limit distributions, with non-normal limit
  distributions and with asymptotic periodic distributional behavior.
\end{abstract}

\noindent
{\textbf{MSC2010:} 60C05, 60F05, 60J05, 68Q25.

\noindent
\textbf{Keywords:} P\'olya urn, P\'olya-Eggenberger urn, contraction
method, weak convergence, limit law, probability metric, recursive
distributional equation.

\section{Introduction}
In this paper, we develop an approach to prove limit theorems for P\'olya
urn models by the contraction method. We consider an urn with balls in
a finite number $m\ge 2$ of different colors, numbered by
$1,\ldots,m$. The evolution of a P\'olya urn is determined by an
$m\times m$ replacement matrix $R=(a_{ij})_{1\le i,j\le m}$ which is
given in advance together with an initial (time $0$) composition of
the urn with at least one ball. Time evolves in discrete steps. In
each step, one ball is drawn uniformly at random from the urn. If it
has color $i$ it is placed back into the urn together with $a_{ij}$
balls of color $j$ for all $j=1,\ldots,m$. The steps are iterated
independently. A classical problem is to identify the asymptotic
behavior of the numbers of balls of each color as the number $n$ of
steps tends to infinity. The literature on this problem, in particular
on limit theorems for the normalized numbers of balls of each color, is
vast. We refer to the monographs of Johnson and Kotz \cite{joko77} and
Mahmoud \cite{ma09} and the references and comments on the literature
in the papers of Janson \cite{Ja04}, Flajolet et al. \cite{flgape05}
and Pouyanne \cite{Pou08}.

A couple of approaches have been used to analyze the asymptotic
behavior of P\'olya urn models, most notably the method of moments,
discrete time martingale methods, embeddings into continuous time
multitype branching processes, and methods from analytic combinatorics
based on generating functions. All these methods use the ``forward''
dynamic of the urn process by exploiting that the distribution of the
composition at time $n$ given time $n-1$ is explicitly accessible.

In the present paper, we propose an approach based on a ``backward''
decomposition of the urn process. We construct a new embedding of the
evolution of the urn into an associated  combinatorial  random tree
structure growing in discrete time. Our associated tree can be
decomposed at its root (time $0$) such that the growth dynamics of the
subtrees of the  root  resemble the whole tree in distribution. More
precisely we have different types of distributions for the associated
tree, one type for each possible color of its root. The decomposition
of the associated tree into subtrees gives rise to a system of
distributional recurrences for the numbers of balls of each color. To
extract the asymptotic behavior from such systems we develop an
approach in the context of the contraction method.

The contraction method is well known in the probabilistic analysis of
algorithms. It was introduced by R\"osler \cite{Ro91} and first
developed systematically in Rachev and R\"uschendorf \cite{RaRu95}.
A rather general framework with numerous applications to the analysis
of recursive algorithms and random trees was given by Neininger and
R\"uschendorf \cite{NeRu04}. The contraction method has been used for
sequences of distributions of random variables (or random vectors or
stochastic processes) that satisfy an appropriate recurrence relation.
To the best of our knowledge it has not yet been used for systems of
such recurrence relations as they arise in the present paper, the only
exception being  Leckey et al. \cite{lenesz13} where tries are
analyzed under a Markov source model. A novel technical aspect of the present paper is that we extend the use of the contraction method to systems of recurrence relations systematically.

The aim of this paper is not to compete with other techniques with
respect to generality under which urn models can be analyzed. Instead
we discuss our approach at a couple of examples illustrating the
contraction framework in three frequently occurring asymptotic
regimes: normal limit laws, non-normal limit laws and regimes with
oscillating  distributional behavior.   We also discuss the case of
random entries in the replacement matrix. Our proofs are generic and
can easily be transferred to other urn models or be developed into
more general theorems when  asymptotic expansions of means
(respectively means and variances in the normal limit case) are
available, cf.~the types of expansions of the means in section
\ref{limiteq}.

A general assumption in the present paper is that the replacement
matrix is balanced, i.e., that we have $\sum_{j=1}^m a_{ij} =:K-1$ for
all $i=1,\ldots,m$, where $K\ge 2$ is a fixed integer. (The notation
$K$ is unfortunate since this integer is not random and mainly chosen
to have similarity in notation with earlier work on the contraction
method.) An  implication of the balance condition is that the  growths
of the subtrees of the associated tree processes can asymptotically
jointly be captured by Dirichlet distributions. This leads to
characterizations of  the limit distributions in all cases (normal,
non-normal and oscillatory behavior) by systems, cf.
(\ref{limitta})--(\ref{limittc}) below, of distributional fixed-point
equations where all coefficients are powers of components of a
Dirichlet distributed vector, see also the discussion in section
\ref{limiteq}.   The present
approach reveals that all three regimes are governed by  systems of distributional fixed-point equations of similar type.

The paper is organized as follows: in section \ref{appr}, we introduce
the associated trees into which the urn models are embedded and derive
the systems of distributional recurrences for the numbers of  balls of
a certain color from the  associated trees. In section \ref{limiteq},
we outline the types of systems of fixed-point equations that emerge
from the distributional recurrences after a proper normalization. To
make  these recurrences and fixed-point equations accessible to the
contraction method, in section \ref{secspaces}, we first introduce
spaces of probability distributions and appropriate cartesian product
spaces together with metrics on these product spaces. The metrics in
use are product versions of the minimal $L_p$ metrics and product
versions of the Zolotarev metrics. In section \ref{assofix}, we use
these spaces and metrics to show that our systems of distributional
fixed-point equations uniquely characterize vectors of probability
distributions  via a contraction property. These cover the types of
distributional fixed-point equations that appear in the  final section
\ref{conexp} where we discuss examples of limit laws for P\'olya urn
schemes within our approach. In  section \ref{conexp} also our
convergence proofs are worked out, again based on the product versions
of the  minimal $L_p$ and Zolotarev metrics. In section \ref{sec7} we compare our study of systems of recurrences with an alternative formulation based on multivariate recurrences and explain the advantages and necessity of our approach. 

For similar results see \cite{chmapo13} (announced after posting the present paper on {\tt arXiv.org}).\\

\noindent
{\bf Notation.} By $\stackrel{d}{\longrightarrow}$ convergence in
distribution is denoted. We denote the normal distribution on $\R$
with mean $\mu\in\R$ and variance $\sigma^2\ge 0$ by ${\cal
  N}(\mu,\sigma^2)$. In the case $\sigma^2=0$, this degenerates to the
Dirac measure in $\mu$.  Throughout the paper, the Bachmann-Landau
symbols are used in asymptotic statements. We denote by $\log(x)$ for $x>0$ the
natural logarithm of $x$ and the non-negative integers by $\N_{0}:=\{0,1,2,\dots\}$.\\

\noindent
{\bf Acknowledgements:} We thank two referees for their comments and careful reading. We also thank the e-Print archive {\tt arXiv.org} and
 Cornell University Library for making an electronic preprint of this work freely and publicly  available by January 16, 2013.

\section{A recursive description of P\'olya urns}\label{appr}
In this section, we explain our embedding of urn processes into
associated combinatorial  random tree structures growing in discrete
time. The distributional self-similarity within the subtrees of the
roots of these associated trees leads to systems of distributional
recurrences which constitute the core of our approach.  \\

\noindent
{\bf The P\'olya urn.}
To develop our approach, we first consider an urn model with two
colors, black and white, and a deterministic replacement matrix $R$. Below, an
extension of this approach to urns with more than two colors and
replacement matrices with random entries is discussed as well.
 To be definite, we use the replacement matrix
\begin{align}\label{rep2x2}
  R=\left[ \begin{array}{cc} a & b \\ c & d\end{array} \right] \quad \mbox{ with }
  a,d \in \N_0\cup\{-1\}\mbox{ and } b,c \in \N_0
\end{align}
with
\begin{align*}
  a+b=c+d=:K-1 \ge 1.
\end{align*}
The assumption that the sums of the entries in each row are
the same will become essential only from Lemma \ref{asysubsize} on. Now,  after drawing a black ball, this ball is placed back into the
urn together with $a$ new black balls and $b$ new white balls. If
drawing a white ball, it is placed back into the urn together with $c$
black balls and $d$ white balls. A diagonal entry $a=-1$ (or $d=-1$)
implies that a  drawn black (or white) ball is not placed back into
the urn while balls of the other color are still added to the urn. As
initial configuration, we consider both, one black ball or one white
ball. Other initial configurations can be dealt with as well, also
discussed below. We denote by $B_n^\mathrm{b}$ the number of black
balls after $n$ steps when initially starting with one black ball, by
$B_n^\mathrm{w}$ the number of black balls after $n$ steps when
initially starting with one white ball. Hence, we have
$B_0^\mathrm{b}=1$ and $B_0^\mathrm{w}=0$.\\

\noindent
{\bf The associated tree.}
We encode the urn process as follows by a discrete time evolution of a
random tree with nodes  colored black or white. This tree is called
{\em associated tree}. The initial  urn with one ball, say a black
one, is associated with a tree with one  root node of the same (black)
color. The ball in the urn is represented by this root node. Now
drawing the ball and placing it back into the urn together  with $a$
new black balls and $b$ new white balls is encoded in the associated
tree by adding $a+b+1=K$ children to the root node, $a+1$ of them
being black and $b$ being white.  The root node then  no longer
represents a ball  in the tree, whereas the $K$ new leaves of the tree
now represent the $K$ balls in the urn. Now, we iterate this
procedure: At any step, a ball is drawn from the urn. It is
represented by one of the leaves, say node $v$ in the tree. The urn
follows its dynamic. If the ball drawn is black, the (black) leaf  $v$
gets $K$ children, $a+1$ black ones and $b$ white ones. Similarly, if
the ball drawn is white, the (white) leaf $v$ gets $c$ black children
and $d+1$ white children. In both cases, $v$ no longer represents a
ball in the urn. The ball drawn and the new balls are represented by
the children of $v$. The correspondence between all other leaves of
the tree and the other balls in the urn remains unchanged. For an
example of an evolution of an urn and its associated tree see Figure~1.
\begin{figure}[htb] \label{fig1}
\noindent\rule[1ex]{\textwidth}{0.8pt}
\centering
\includegraphics{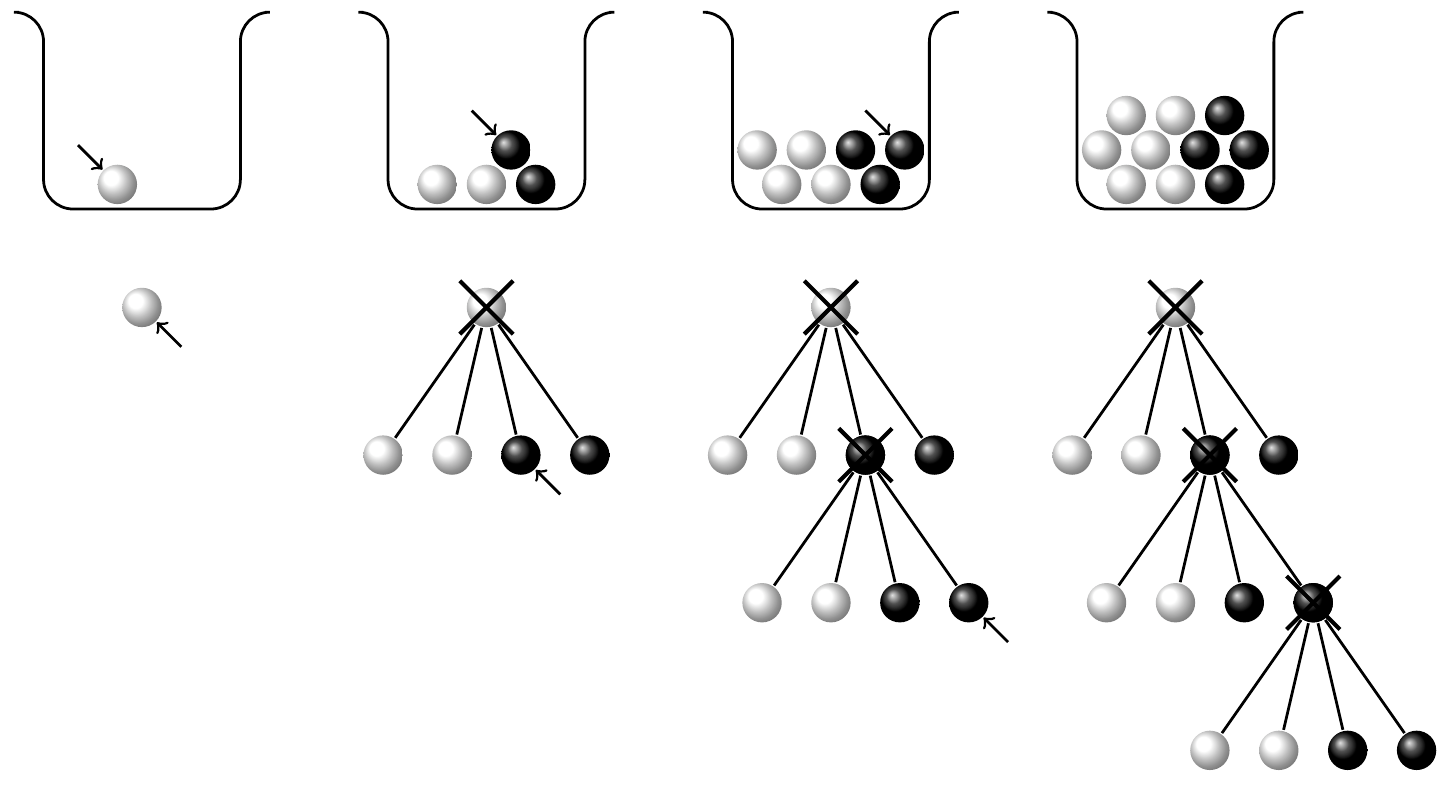}
\caption{A realization of the evolution of the P\'olya urn with
  replacement matrix~\usebox{\smlmat} and initially one white ball.
  The arrows indicate which ball is drawn (resp.~leaf is replaced) in
  each step. Below each urn its associated tree is shown. Leaf nodes
  correspond to the balls in the urn, non-leaf nodes (crossed out) do
  no longer correspond to balls in the urn. However, their color still
  matters for the recursive decomposition of the associated tree.}
\noindent\rule[1ex]{\textwidth}{0.8pt}
\end{figure}
Hence, at any time, the balls in the urn are represented by the leaves
of the associate tree, where the colors of balls and representing
leaves match. Each node of the tree is either a leaf or has $K$
children. We could as well simulate the urn process by only running
the evolution of the associated tree as follows: Start with one root
node of the color of the initial ball of the urn. At any step, choose
one of the leaves of the tree uniformly at random, inspect its color,
add $K$ children to the chosen leaf and color these children as
defined above. Then after $n$ steps, the tree has
$n\left(K-1\right)+1$ leaves. The number of black leaves is
distributed as $B_n^\mathrm{b}$ if the root node was black and
distributed as $B_n^\mathrm{w}$, if the root node was white.

Subsequently, it is important to note the following recursive structure
of the associated tree: For a fixed replacement matrix of the P\'olya
urn, we consider the two initial compositions of one black respectively
one white ball and their two associated trees. We call these the
$\mathrm{b}$-associated respectively $\mathrm{w}$-associated tree.
Consider one of these associated trees after $n\geq1$ steps.  It has
$n(K-1)+1$ leaves and each subtree rooted at a child of the associated
tree's root (we call them shortly only subtrees) has a random number
of leaves according to how often a leaf node has been chosen for
replacement in the subtree. We condition on the numbers of leaves of
the subtrees to be $i_r(K-1)+1$ with $i_r\in\N_0$ for $r=1,\ldots,K$.
Note that we have $\sum_{r=1}^K i_r=n-1$, the $-1$ resulting from the
fact that  in the first step of the evolution of the associated tree,
the subtrees are being generated, only afterwards they start growing.
From the evolution of the $\mathrm{b}$-associated tree, it is clear
that conditioned on the subtrees' numbers of leaves being
$i_r(K-1)+1$, the subtrees are stochastically independent and the
$r$-th subtree is distributed as an associated tree after $i_r$ steps.
Whether it has the distribution of the  $\mathrm{b}$- or the
$\mathrm{w}$-associated tree depends on the color of the subtree's
root node.

To summarize, we have that conditioned on their numbers of leaves, the
subtrees of associated trees are independent and distributed as
associated trees of corresponding size and type inherited from the
color of their root node. \\

\noindent {\bf System of recursive equations.} We set up recursive
equations for the distributions of the quantities $B_n^\mathrm{b}$ and
$B_n^\mathrm{w}$: For $B_n^\mathrm{b}$, we start the urn with one
black ball and get a $\mathrm{b}$-associated tree with a black root
node. Now, $B_n^\mathrm{b}$ is distributed as the number of black
leaves in the associated tree after $n$ steps which, for $n\geq1$, we
express as the sum of the numbers of black leaves of its subtrees. As
discussed above, conditionally on
$I^{(n)}=(I^{(n)}_1,\ldots,I^{(n)}_K)$, the vector of the numbers of
balls drawn in each subtree, these subtrees are independent and
distributed as $\mathrm{b}$-associated trees or
$\mathrm{w}$-associated trees of the corresponding size depending on
the color of their roots. In a $\mathrm{b}$-associated tree, the root
has $a+1$ black and $b=K-(a+1)$ white children. Hence, we obtain
\begin{align}\label{rde1}
  B_n^\mathrm{b} \stackrel{d}{=}
  \sum_{r=1}^{a+1} B^{\mathrm{b}, (r)}_{I^{(n)}_r}
  + \sum_{r=a+2}^{K} B^{\mathrm{w}, (r)}_{I^{(n)}_r}, \quad n\ge 1,
\end{align}
where $\stackrel{d}{=}$ denotes that left and right hand side have an
identical distribution, we have that $(B^{\mathrm{b}, (1)}_k)_{0\le k
  < n},\ldots, (B^{\mathrm{b}, (a+1)}_k)_{0\le k < n}$,
$(B^{\mathrm{w}, (a+2)}_k)_{0\le k < n},\ldots, (B^{\mathrm{w},
  (K)}_k)_{0\le k < n}$, $I^{(n)}$ are independent, the
$B^{\mathrm{b}, (r)}_k$ are distributed as $B^{\mathrm{b}}_k$, the
$B^{\mathrm{w}, (r)}_k$ are distributed as $B^{\mathrm{w}}_k$ for
$k=0,\ldots,n-1$ for the respective values of $r$.

Similarly, we obtain a recursive distributional equation for
$B_n^\mathrm{w}$. We have
\begin{align}\label{rde2}
B_n^\mathrm{w} \stackrel{d}{=}
\sum_{r=1}^{c} B^{\mathrm{b}, (r)}_{I^{(n)}_r}
+ \sum_{r=c+1}^{K} B^{\mathrm{w}, (r)}_{I^{(n)}_r}, \quad n\ge 1,
\end{align}
with conditions on independence and identical distributions as in
(\ref{rde1}). Note that with the initial value
$(B_0^\mathrm{b},B_0^\mathrm{w})=(1,0)$, the system of equations
(\ref{rde1})--(\ref{rde2}) defines the sequence of pairs of
distributions $({\cal L}(B_n^\mathrm{b}),{\cal
  L}(B_n^\mathrm{w}))_{n\ge 0}$.\\

\noindent
{\bf General number of colors.} The approach above for urns with two colors extends directly to
urns with an arbitrary number $m\ge 2$ of colors. We denote the replacement matrix by $R=(a_{ij})_{1\le i,j \le m}$ with
\begin{align*}
a_{ij} \in  \left\{ \begin{array}{cc}\N_0, &\mbox{for }i\neq j, \\ \N_0\cup \{-1\}, &\mbox{for }i=j,  \end{array} \right. \quad \mbox{ and } \quad \sum_{j=1}^m a_{ij} =:K-1\ge 1 \mbox{ for } i=1,\ldots,m.
\end{align*}
The colors (subsequently also called types) are now numbered $1,\ldots,m$ and we focus on the number of balls of type $1$  after $n$ steps. When starting with one ball of  type $j$ we denote by  $B_n^{[j]}$  the number of type $1$ balls after $n$ steps. To formulate a system of distributional recurrences generalizing (\ref{rde1}) and (\ref{rde2}) we further denote the intervals of integers
\begin{align}\label{intrep}
J_{ij}:= \left\{ \begin{array}{cl}
\left[1+ \sum_{k<i}a_{kj}, \sum_{k\le i} a_{kj}\right] \cap \N_0, & \mbox{for }i<j, \vspace{2mm} \\
\left[1+ \sum_{k<i}a_{kj}, 1+\sum_{k\le i} a_{kj}\right] \cap \N_0, & \mbox{for }i=j,\vspace{2mm}\\
\left[2+ \sum_{k<i}a_{kj}, 1+\sum_{k\le i} a_{kj}\right] \cap \N_0, & \mbox{for }i>j,
\end{array} \right.
\end{align}
with the convention $[x,y]=\emptyset$ if $x>y$. Then, we have
\begin{align}\label{rde3}
B_n^{[j]} \stackrel{d}{=} \sum_{i=1}^m \sum_{r\in J_{ij}} B^{[i], (r)}_{I^{(n)}_r}, \quad n\ge 1, \quad j\in \{1,\ldots,m\},
\end{align}
where, for each $j\in \{1,\ldots,m\}$ we have that  the
family
$$\left\{\left(B^{[i], (r)}_k\right)_{0\le k < n}\,\Big|\, r\in J_{ij}, i \in \{1,\ldots,m\}\right\} \cup \left\{I^{(n)}\right\}$$
is independent,  $B^{[i], (r)}_k$ is distributed as
$B^{[i]}_k$  for all $i \in \{1,\ldots,m\}$, $0\le k < n$
and $r\in J_{ij}$ and $I^{(n)}$ has the
distribution as above in Lemma  \ref{asysubsize}.\\

\noindent
{\bf Composition vectors.} For  urns with more than two colors one may study the numbers of balls of each color jointly. Even though the system (\ref{rde3}) gives only access to the marginals of this composition vector we could as well derive a system of recurrences for the composition vectors and develop our approach for the joint distribution of the composition vector. The work spaces $({\cal M}_s^{\R})^{\times d}$ and $({\cal M}_s^{\C})^{\times d}$ defined in section \ref{secspaces} below (there $d$ corresponds to the number of colors) then become $({\cal M}_s^{\R^{d-1}})^{\times d}$ and $({\cal M}_s^{\C^{d-1}})^{\times d}$. The Zolotarev metrics $\zeta_s$ and minimal $L_p$-metrics $\ell_p$ are defined on $\R^{d-1}$ and $\C^{d-1}$  as well and can be used to develop a similar limit theory for the composition vectors as presented here for their marginals. \\

\noindent
{\bf Random entries in the replacement matrix.} The case of a
replacement matrix with random entries such that each row almost
surely sums to a deterministic and fixed $K-1\ge 1$ can be covered by an extension of the system (\ref{rde3}). Instead of formulating such an extension explicitly, we discuss an example in section \ref{kersting}.\\

\noindent
{\bf Growth of subtrees.}
In our analysis, the asymptotic growth of the $K$  subtrees of the
associated tree is used.  We  denote by
$I^{(n)}=(I^{(n)}_1,\ldots,I^{(n)}_K)$ the vector of the numbers of
draws of leaves from each   subtree after $n\geq1$ draws in the full
associated tree. In other words, $I^{(n)}_r(K-1)+1$  is the number of
leaves of the $r$-th subtree after  $n\geq1$  steps. We have
$I^{(1)}=(0,\ldots,0)$,  and $I^{(2)}$ is a vector with all entries
being $0$ except for one coordinate which is $1$. To describe the
asymptotic growth of $I^{(n)}$, we need the Dirichlet distribution
$\mathrm{Dirichlet}((K-1)^{-1},\ldots,(K-1)^{-1})$: It is
the distribution of a random vector $(D_1,\ldots,D_K)$ with
$\sum_{r=1}^K D_r =1$ and such that $(D_1,\ldots,D_{K-1})$ has a
Lebesgue-density supported by the simplex ${\cal
  S}_K:=\bigl\{(x_1,\ldots,x_{K-1})\in [0,1]^{K-1}\,|\, \sum_{r=1}^{K-1}
x_r \le 1\bigr\}$ given for $x\in  {\cal S}_K$ by
\begin{align*}
  x=(x_1,\ldots,x_{K-1})\mapsto c_K
  \left(1-\sum_{r=1}^{K-1}x_{r}\right)^{\!\!\frac{2-K}{K-1}}
  \prod_{r=1}^{K-1}x_r^\frac{2-K}{K-1}
  ,\qquad c_K=\frac{\Gamma\bigl((K-1)^{-1}\bigr)^{1-K}}{K-1},
\end{align*}
where $\Gamma$ denotes Euler's Gamma function.
In particular, $D_1,\ldots,D_K$ are identically distributed with the
$\mathrm{beta}\bigl((K-1)^{-1},1\bigr)$ distribution, i.e., with
Lebesgue-density
\begin{align*}
x\mapsto\left(K-1\right)^{-1} x^\frac{2-K}{K-1}, \quad x \in [0,1].
\end{align*}
We have the following asymptotic behavior of $I^{(n)}$:
\begin{lem} \label{asysubsize}
Consider a P\'olya urn with constant row sum $K-1\ge 1$ and its
associated tree.  For the numbers of balls
$I^{(n)}=(I^{(n)}_1,\ldots,I^{(n)}_K)$ drawn in each subtree of the
associated tree when $n$ balls have been drawn in the whole associated
tree, we have, as $n\to\infty$,
\begin{align*}
  \left(\frac{I^{(n)}_1}{n},\ldots,\frac{I^{(n)}_K}{n}\right)
  \longrightarrow
  \left(D_1,\ldots,D_K\right)
\end{align*}
almost surely and in any $L_p$, where $(D_1,\ldots,D_K)$ has the
Dirichlet distribution
\begin{align*}
  {\cal L}(D_1,\ldots,D_K) =\mathrm{Dirichlet}\left(\frac{1}{K-1},\ldots,\frac{1}{K-1}\right).
\end{align*}
\end{lem}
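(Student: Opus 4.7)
The plan is to reduce the dynamics of the subtree sizes to a classical generalized P\'olya urn with $K$ colors, for which Dirichlet limits of the proportions are well known. After the first step, the root of the associated tree has been replaced by $K$ children, yielding $K$ subtrees each consisting of a single leaf; thus $I^{(1)}=(0,\ldots,0)$ corresponds to leaf-counts $L^{(1)}_r \coleq I^{(1)}_r(K-1)+1 = 1$ for $r=1,\ldots,K$. At each subsequent step, one of the $n(K-1)+1$ leaves of the full associated tree is chosen uniformly at random, increasing $I^{(n)}_r$ by one (and hence $L^{(n)}_r$ by $K-1$) for the unique index $r$ whose subtree contains the chosen leaf. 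Therefore $(L^{(n)}_1,\ldots,L^{(n)}_K)_{n\ge1}$ is exactly the composition vector of a $K$-color classical P\'olya urn with diagonal replacement matrix $(K-1)\cdot I_K$, initiated at $(1,\ldots,1)$ after step $1$.

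Next I would invoke the classical Athreya--Karlin theorem for such diagonal urns (see e.g.\ Johnson and Kotz \cite{joko77}): the proportions $L^{(n)}_r/(n(K-1)+1)$ converge almost surely, jointly in $r$, to a random vector $(D_1,\ldots,D_K)$ with Dirichlet distribution whose parameters are the initial counts divided by the increment $K-1$, i.e.\ $(1/(K-1),\ldots,1/(K-1))$. Since
\begin{align*}
  \frac{I^{(n)}_r}{n} = \frac{L^{(n)}_r-1}{n(K-1)} = \frac{n(K-1)+1}{n(K-1)}\cdot\frac{L^{(n)}_r}{n(K-1)+1} - \frac{1}{n(K-1)},
\end{align*}
the almost sure convergence $I^{(n)}_r/n \to D_r$ follows immediately, jointly for $r=1,\ldots,K$. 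The $L_p$ convergence for every $p\ge 1$ is a consequence of dominated convergence, because $0 \le I^{(n)}_r/n \le 1$.

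If a self-contained derivation is preferred over citing the classical result, the argument can be organized in two short steps. First, a direct computation shows that for each $r$ the sequence $M^{(n)}_r \coleq L^{(n)}_r/(n(K-1)+1)$ is a nonnegative martingale bounded by $1$, hence converges almost surely to a limit $D_r$; the joint limit $(D_1,\ldots,D_K)$ satisfies $\sum_r D_r = 1$ since the $L^{(n)}_r$ sum to $n(K-1)+1$. Second, the Dirichlet identification can be obtained from a moment computation: the mixed factorial moments $\Erw{\prod_{r=1}^K \prod_{j=0}^{k_r-1}\bigl(L^{(n)}_r+j(K-1)\bigr)}$ are tractable via the urn dynamics and, after normalization by $(n(K-1)+1)^{k_1+\cdots+k_K}$, converge to the Dirichlet moments $\prod_r \Gamma(k_r+1/(K-1))/\Gamma(1/(K-1)) \cdot \Gamma(K/(K-1))/\Gamma(K/(K-1)+\sum_r k_r)$; uniqueness of the distribution from its moments (the support is the compact simplex) then pins down the limit.

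The only step that requires genuine work is the identification of the joint limit law as Dirichlet; the martingale convergence and $L_p$ upgrades are routine given the a priori boundedness. I would opt for the citation-based route above, since the classical urn theorem applies verbatim and the self-contained moment calculation, while standard, would add length without new insight.
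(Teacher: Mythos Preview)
Your proposal is correct and follows essentially the same approach as the paper: both reduce the subtree leaf-counts to a classical $K$-color P\'olya--Eggenberger urn with diagonal replacement matrix $(K-1)\cdot \mathrm{Id}_K$ started from $(1,\ldots,1)$, then invoke the standard Dirichlet limit theorem (the paper cites Athreya \cite[Corollary~1]{Ath69} directly). Your added details---the explicit algebraic link between $I^{(n)}_r/n$ and the proportions, the dominated convergence argument for $L_p$, and the optional self-contained martingale/moment route---are all sound but go slightly beyond what the paper spells out.
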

\begin{proof}
The sequence $(I^{(n)}_1(K-1)+1,\ldots, I^{(n)}_K(K-1)+1)_{n\in\N_0}$
has an interpretation by another urn model, which we call the
subtree-induced urn: For this, we give additional labels to the
leaves of the associated tree. The set of possible labels is
$\{1,\ldots,K\}$ and we label a leaf $j$  if it belongs to the $j$-th
subtree of the root (any ordering of the subtrees of the root is
fine).  Hence, all  leaves of a subtree of the associated tree's root
get the same label, leaves of different subtrees get different labels.
Now, the subtree-induced urn has balls of colors $1,\ldots,K$. At any
time, the number of balls of each color is identical with the numbers
of leaves with the corresponding label. Hence, the dynamic of the
subtree-induced urn is that of a P\'olya urn with initially $K$
balls, one of each color.   Whenever a ball is drawn, it is placed
back into the urn together with $K-1$ balls of the same color. In
other words, the replacement matrix for the dynamic of the
subtree-induced urn is a $K\times K$ diagonal matrix with all diagonal
entries equal to $K-1$. After $n$ steps, we have  $I^{(n)}_r(K-1)+1$
balls of color $r$. The dynamic of the subtree-induced urn as a
$K$-color P\'olya-Eggenberger urn  is
well-known, cf.~Athreya \cite[Corollary~1]{Ath69}, we have for $n\to\infty$
\begin{align*}
  \left(\frac{I^{(n)}_1(K-1)+1}{n(K-1)+1},\ldots,\frac{I^{(n)}_K(K-1)+1}{n(K-1)+1}\right)
  \longrightarrow (D_1,\ldots,D_K)
\end{align*}
almost surely and in $L_{p}$ for any $p\geq1$, where $(D_1,\ldots,D_K)$ has a
$\mathrm{Dirichlet}((K-1)^{-1},\ldots,(K-1)^{-1})$ distribution. This
implies the assertion.
\end{proof}
Subsequently we only consider balanced urns such that we have the asymptotic behaviour of $I^{(n)}/n$ in Lemma \ref{asysubsize} available. The assumption of balance does only enter our subsequent analysis via Lemma \ref{asysubsize}. It seems feasible to apply our approach also to unbalanced urns that  have an associated tree
such that $I^{(n)}/n$ converges to a non-degenerate limit vector $V=(V_1,\ldots,V_K)$ of random probabilities, i.e.~of random $V_1,\ldots,V_K\ge 0$ such that $\sum_{r=1}^K V_r=1$ almost surely and $\Prob(\max_{1\le r\le K} V_r <1)>0$. It seems that the contraction argument may even allow that the distribution of $V$ depends on the color of the ball the urn is started with. We leave these issues for future research.

\section{Systems of limit equations} \label{limiteq}
In this section we outline how systems of the form (\ref{rde3}) are used subsequently. Based on the order of means and variances the $B^{[j]}_n$ are normalized and recurrences for the normalized random variables are considered. From this, with $n\to \infty$,  we derive systems of recursive distributional equations, see (\ref{limitta}), (\ref{fixtb}) and (\ref{limittc}). According to the general idea of the contraction method we then show first that these systems characterize distributions, see section \ref{assofix}, and second that the normalized random variables converge in distribution towards these distributions, see section \ref{conexp}. In the periodic case (c) we do not have convergence but the solution of the system (\ref{limittc}) allows to describe the asymptotic periodic behavior.

Crucial are the  expansions of the means
\begin{align*}
\mu^{[j]}_n:=\E\left[B_n^{[j]}\right], \quad j=1,\ldots,m,
\end{align*}
which are intimately related to the spectral decomposition of
the replacement matrix.
We only consider cases where these means grow linearly. Note however,
that even balanced urns can have quite different growth orders. An
example is the replacement matrix $\left[\begin{smallmatrix}4 & 0 \\ 3
    &1\end{smallmatrix} \right]$, see Kotz et al. \cite{komaro} for
this example or Janson \cite{Ja06T} for a comprehensive account of
urns with triangular replacement matrix.\\

\noindent
{\bf Type (a).} Assume  that we have expansions of the form, as $n\to\infty$,
\begin{align*}
\mu^{[j]}_n=c_\mu n + d_j n^\lambda + o(n^\lambda), \quad j=1,\ldots,m,
\end{align*}
with a constant $c_\mu>0$ independent of $j$, with constants
$d_j\in\R$ and an exponent $1/2<\lambda<1$. We call this scenario of
{\bf type (a)}. This suggests that the variances are of the order
$n^{2\lambda}$ and  a proper scaling is
\begin{align}\label{scala}
X^{[j]}_n:= \frac{B_n^{[j]} -\mu^{[j]}_n}{n^\lambda}, \quad n\ge 1,\quad j=1,\ldots,m.
\end{align}
Deriving from (\ref{rde3}) a system of recurrences for the $X^{[j]}_n$
and letting formally $n\to\infty$ (this is done explicitly in the
examples in section \ref{conexp}), we obtain the system of fixed-point
equations
\begin{align}\label{limitta}
X^{[j]}\stackrel{d}{=} \sum_{i=1}^m \sum_{r\in J_{ij}} D_r^\lambda X^{[i], (r)} + b^{[j]}, \quad j=1,\ldots,m,
\end{align}
where the $X^{[i], (r)}$ and $(D_1,\ldots,D_K)$ are independent, the
$X^{[i], (r)}$ are distributed as $X^{[i]}$, $(D_1,\ldots,D_K)$ is
distributed as in Lemma \ref{asysubsize} and the $b^{[j]}$ are
functions of $(D_1,\ldots,D_K)$. It turns out that such a system
subject to centered $X^{[j]}$ with finite second moments has a unique
solution on the level of distributions (Theorem \ref{fp1}). This
identifies the weak
limits of the $X^{[j]}_n$. Examples are in sections \ref{ex1sec} and \ref{kersting}. One can as well obtain the same system (\ref{limitta}) with $b^{[j]}=0$ for all $j$ by only centering the $B_n^{[j]}$ by $c_\mu n$  instead of the exact mean. Then, system  (\ref{limitta}) has to be solved subject to finite second moments and appropriate means. Moreover, the system allows to calculate higher order moments of the solution. From the second and third moments one can typically see that the solution is not a vector of normal distributions.

Expansions of the form
\begin{align*}
\mu^{[j]}_n=c_\mu n + d_j n^\lambda \log^\nu(n) + o(n^\lambda\log^\nu(n)), \quad j=1,\ldots,m,
\end{align*}
with $\nu\ge 1$ also appear, see Janson \cite{Ja04} or the table on page 279 of Pouyanne \cite{Pou05} for a classification. Such additional  factors $\log^\nu(n)$, slowly varying at infinity, give rise to the same limit system (\ref{limitta}) and hence do not affect the limit distributions. These cases can be covered similarly to the examples in section \ref{conexp}. We omit the details; see however Hwang and Neininger \cite{hwne02} for  the occurrence and analysis of
similar slowly varying factors. \\

\noindent
{\bf Type (b).} Assume  that we have expansions of the form, as $n\to\infty$,
\begin{align*}
\mu^{[j]}_n=c_\mu n +  o(\sqrt{n}), \quad j=1,\ldots,m,
\end{align*}
with a constant $c_\mu>0$ independent of $j$. We call this scenario of {\bf type (b)}. This suggests that the variances are of  linear order  and  a proper scaling is
\begin{align}\label{scalb}
X^{[j]}_n:= \frac{B_n^{[j]} -\mu^{[j]}_n}{\sqrt{\Var(B_n^{[j]})}}, \quad n\ge 1,\quad j=1,\ldots,m
\end{align}
(or $\sqrt{\Var(B_n^{[j]})}$ replaced by $\sqrt{n}$).
The corresponding system of fixed-point equations in the limit is
\begin{align}\label{fixtb}
X^{[j]}\stackrel{d}{=} \sum_{i=1}^m \sum_{r\in J_{ij}} \sqrt{D_r} X^{[i], (r)}, \quad j=1,\ldots,m,
\end{align}
with conditions as in (\ref{limitta}). Under appropriate assumptions on moments we find that the only solution is all $X^{[j]}$ being standard normally distributed (Theorem \ref{fp2}). This leads to asymptotic normality of the $X^{[j]}_n$. Examples are given in sections \ref{ex1sec} and \ref{kersting}. The  case
\begin{align*}
\mu^{[j]}_n=c_\mu n +  \Theta(\sqrt{n}), \quad j=1,\ldots,m,
\end{align*}
leads to the same system of fixed-point equations (\ref{fixtb}). However, here the variances typically are of order $n \log^\delta(n)$ with a positive $\delta$.\\

\noindent
{\bf Type (c).} Assume  that we have expansions of the form, as $n\to\infty$,
\begin{align*}
\mu^{[j]}_n=c_\mu n +  \Re\left(\kappa_j n^{\mathrm{i} \mu}\right) n^\lambda + o(n^\lambda), \quad j=1,\ldots,m,
\end{align*}
with a constant $c_\mu>0$ independent of $j$, $1/2<\lambda<1$,
constants $\kappa_j\in \C$ and $\mu \in \R\setminus\{0\}$. (By
$\mathrm{i}$ the imaginary unit is denoted.) We call this scenario of
{\bf type (c)}. This suggests oscillating variances of the order
$n^{2\lambda}$. The oscillatory behavior of mean and variance can
typically not be removed by proper scaling to obtain convergence
towards a limit distribution. Using the scaling
\begin{align} \label{scalc}
X^{[j]}_n:= \frac{B_n^{[j]} -c_\mu n}{n^\lambda}, \quad n\ge 1,\quad j=1,\ldots,m.
\end{align}
it turns out that the oscillating behavior of the  $X^{[j]}_n$ can be captured by the
system of fixed-point equations
\begin{align}\label{limittc}
X^{[j]}\stackrel{d}{=} \sum_{i=1}^m \sum_{r\in J_{ij}} D_r^\omega X^{[i], (r)}, \quad j=1,\ldots,m,
\end{align}
with conditions as in (\ref{limitta}) and $\omega:=\lambda + \mathrm{i}\mu$.
Under appropriate moment assumptions this has a unique solution within distributions
on $\C$ (Theorem \ref{fp3}). An example of a corresponding distributional
approximation is given in section  \ref{excyc}.

As in type (a) we may have additional factors $\log^\nu(n)$, i.e.
\begin{align*}
\mu^{[j]}_n=c_\mu n +  \Re\left(\kappa_j n^{\mathrm{i} \mu}\right) n^\lambda \log^\nu(n)+ o(n^\lambda\log^\nu(n)), \quad j=1,\ldots,m.
\end{align*}
The  comments as for type (a) cases above apply here as well.\\

Note that the approach of  embedding urn models into continuous time multitype branching processes, see  \cite{atka68, Ja04}, also leads to characterizations of the limit distributions as in
(\ref{limitta}) and (\ref{limittc}). However, the  form of the fixed-point equations is different, see the system in equation  (3.5) in Janson \cite{Ja04}. Properties of such fixed-points have been studied in Chauvin et al. \cite{chposa11,chlipo12,{chlipo12b}}.

\section{Spaces of distributions and metrics} \label{secspaces}
In this section we define cartesian products of spaces of probability distributions and metrics on these products. These metric spaces will be used below to first characterize limit distributions of urn models (section \ref{assofix}) and then  prove convergence in distribution of
  the scaled numbers of balls of a color (section \ref{conexp}). \\

\noindent
{\bf Spaces.}
We denote by ${\cal M}^\R$ the space of all probability distributions on $\R$ with the Borel $\sigma$-field. Moreover, we consider the subspaces
\begin{align*}
{\cal M}^\R_s&:=\left\{{\cal L}(X)\in {\cal M}^\R  \,\Big|\, \E[|X|^s]<\infty\right\}, \quad s>0,\\
{\cal M}^\R_s(\mu)&:=\left\{{\cal L}(X)\in {\cal M}^\R_s \,\Big|\, \E[X]=\mu \right\}, \quad s\ge 1,\mu\in\R \\
{\cal M}^\R_s(\mu,\sigma^2)&:=\left\{{\cal L}(X)\in {\cal M}^\R_s(\mu) \,\Big|\, \Var(X)=\sigma^2 \right\}, \quad s\ge 2,\mu\in\R, \sigma\ge 0.
\end{align*}
We need the $d$-fold cartesian products, $d\in\N$, of these spaces denoted by
\begin{align}\label{defspace}
\left({\cal M}^\R_s\right)^{\times d}:= {\cal M}^\R_s \times \cdots \times {\cal M}^\R_s,
\end{align}
and analogously $({\cal M}^\R_s(\mu))^{\times d}$ and $({\cal M}^\R_s(\mu,\sigma^2))^{\times d}$.

We also need probability distributions on the complex plane $\C$. By ${\cal M}^\C$ the space of all probability distributions on $\C$ with the Borel $\sigma$-field is denoted. Moreover, for $\gamma\in \C$ we use the subspaces and  product space
\begin{align*}
{\cal M}^\C_s&:=\left\{{\cal L}(X)\in {\cal M}^\C  \,\Big|\, \E[|X|^s]<\infty\right\}, \quad s>0,\\
{\cal M}^\C_2(\gamma)&:=\left\{{\cal L}(X)\in {\cal M}^\C_2 \,\Big|\,  \E[X]=\gamma\right\},\\
\left({\cal M}^\C_2(\gamma)\right)^{\times d}&:= {\cal M}^\C_2(\gamma) \times \cdots \times {\cal M}^\C_2(\gamma).
\end{align*}

To cover the different behavior of the urns two types of metrics  are constructed, extensions of the Zolotarev metrics $\zeta_s$ and the minimal $L_p$-metric $\ell_p$ to the product spaces defined above.  \\

\noindent
{\bf Zolotarev metric.}
 The Zolotarev metric  has been introduced and studied in  Zolotarev \cite{zo76,zo77}.
The contraction method based on the Zolotarev metric was systematically developed  in \cite{NeRu04} and, for issues that go beyond what is needed in this paper, in \cite{KaJa12} and \cite{NeSu12}. We only need
the following properties:
For distributions
${\cal L}(X)$, ${\cal L}(Y)\in {\cal M}^\R$  the Zolotarev distance $\zeta_s$, $s>0$, is defined by
\begin{equation}
\label{eq:3.6} \zeta_s(X,Y) := \zeta_s({\cal L}(X),{\cal L}(Y)):=\sup_{f\in {\cal F}_s}|\E[f(X) -
f(Y)]|
\end{equation}
where $s=m+\alpha$ with $0<\alpha\le 1$,
$m\in\N_0$, and
\begin{equation}
{\cal F}_s:=\{f\in
C^m(\R,\R):|f^{(m)}(x)-f^{(m)}(y)|\le
|x-y|^\alpha\},
\end{equation}
 the space of $m$ times
continuously differentiable functions from
$\R$ to $\R$ such that the $m$-th
derivative is H\"older continuous of order
$\alpha$ with H\"older-constant $1$.

We have that $\zeta_s(X,Y)<\infty$, if all
moments of orders $1,\ldots,m$ of $X$ and $Y$ are
equal and if the $s$-th absolute moments of $X$ and
$Y$ are finite.  Since later on  the cases $1<s\le 3$ are
used, we have two basic cases: First, for $1<s\le 2$ we have $\zeta_s(X,Y)<\infty$  for          ${\cal L}(X)$, ${\cal L}(Y)\in {\cal M}^\R_s(\mu)$ for any $\mu\in\R$. Second, for $2<s\le 3$ we have $\zeta_s(X,Y)<\infty$  for  ${\cal L}(X)$, ${\cal L}(Y)\in {\cal M}^\R_s(\mu,\sigma^2)$ for any $\mu\in\R$ and $\sigma\ge 0$. Moreover, the pairs $({\cal M}^\R_s(\mu), \zeta_s)$ for $1<s\le 2$ and $({\cal M}^\R_s(\mu,\sigma^2), \zeta_s)$ for $2<s\le 3$ are complete metric spaces; for the completeness see \cite[Theorem 5.1]{DrJaNe08}.

 Convergence
in $\zeta_s$ implies weak convergence on $\R$. Furthermore,  $\zeta_s$ is $(s,+)$ ideal, i.e., we have
\begin{align}\label{ideal}
\zeta_s(X+Z,Y+Z)\le\zeta_s(X,Y), \quad  \zeta_s(cX,cY) = c^s \zeta_s(X,Y)
\end{align}
for all  $Z$ being independent of $(X,Y)$ and all $c>0$. Note, that this implies  for $X_1,\ldots,X_n$ independent and  $Y_1,\ldots,Y_n$ independent such that the respective $\zeta_s$ distances are finite that
\begin{align}\label{sumabsch}
\zeta_s\left(\sum_{i=1}^n X_i, \sum_{i=1}^n Y_i\right)\le \sum_{i=1}^n \zeta_s(X_i,Y_i).
\end{align}
On the product spaces $({\cal M}^\R_s(\mu))^{\times d}$ for $1<s\le 2$ and $({\cal M}^\R_s(\mu,\sigma^2))^{\times d}$  for $2<s\le 3$ our first main tool is
\begin{align*}
\zeta_s^\vee((\nu_1,\ldots,\nu_d), (\mu_1,\ldots,\mu_d)):= \max_{1\le j\le d} \zeta_s(\nu_j,\mu_j),
\end{align*}
where $(\nu_1,\ldots,\nu_d), (\mu_1,\ldots,\mu_d) \in {\cal M}^\R_s(\mu))^{\times d}$ and  $\in ({\cal M}^\R_s(\mu,\sigma^2))^{\times d}$ respectively.
Note that $\zeta_s^\vee$ is a complete metric on the respective product spaces and induces the product topology. \\

\noindent
{\bf Minimal $\mathbf{L_p}$-metric $\mathbf{\ell_p}$.} First for probability metrics on the real line the  minimal $L_p$-metric $\ell_p$, $1\le p<\infty$ is defined by
\begin{align*}
\ell_p(\nu,\varrho):=\inf\{\|V-W\|_p\,|\, {\cal L}(V)=\nu, {\cal L}(W)=\varrho\},\quad \nu,\varrho \in {\cal M}^\R_p,
\end{align*}
where $\|V-W\|_p:=(\E[|V-W|^p])^{1/p}$ is the usual $L_p$-norm. The spaces $({\cal M}_p, \ell_p)$  and $({\cal M}_p(\mu),\ell_p)$ for $1\le p<\infty$ are complete metric spaces, see \cite{bifr}. The infimum in the definition of $\ell_p$ is a minimum. Random variables $V'$, $W'$ with distributions $\nu$ and $\varrho$ respectively such that $\ell_p(\nu,\varrho)=\|V'-W'\|_p$ are called {\em optimal couplings}. They do exist for all $\nu,\varrho \in {\cal M}^\R_1$. We use the notation $\ell_p(X,Y):=\ell_p({\cal L}(X),{\cal L}(Y))$ for random variables $X$ and $Y$.
Subsequently also the following inequality between the $\ell_p$ and $\zeta_s$ metrics is used:
\begin{align}\label{zetaellest}
\zeta_s(X,Y)\le \left( \left(\E\left[|X|^s\right]\right)^{1-1/s}+\left(\E\left[|Y|^s\right]\right)^{1-1/s}\right) \ell_s(X,Y), \quad 1< s\le 3,
\end{align}
where, for $1<s\le 2$, we need ${\cal L}(X),  {\cal L}(Y) \in {\cal M}^\R_s(\mu)$ for some $\mu\in\R$ and, for  $2<s\le 3$, we need ${\cal L}(X),  {\cal L}(Y) \in {\cal M}^\R_s(\mu,\sigma^2)$ for some $\mu\in\R$ and $\sigma\ge 0$, see \cite[Lemma 5.7]{DrJaNe08}.

On the product space $({\cal M}_2^\R(0))^{\times d}$ we define
\begin{align*}
\ell_2^\vee((\nu_1,\ldots,\nu_{d}),(\varrho_1,\ldots,\varrho_{d})):=
\max_{1\le j\le d} \ell_2(\nu_j, \varrho_j),
\end{align*}
where $(\nu_1,\ldots,\nu_d), (\mu_1,\ldots,\mu_d) \in ({\cal M}^\R_2(0))^{\times d}$. Note that
$({\cal M}^\R_2(0))^{\times d}, \ell_2^\vee)$ is a complete metric space as well.

Second,  on the complex plane the  minimal $L_p$-metric $\ell_p$ is defined  similarly by
\begin{align*}
\ell_p(\nu,\varrho):=\inf\{\|V-W\|_p\,|\, {\cal L}(V)=\nu, {\cal L}(W)=\varrho\},\quad \nu,\varrho \in {\cal M}^\C_p,
\end{align*}
with the analogous definition of the $L_p$-norm. The respective metric spaces are complete as in the real case and optimal couplings exist as well. On the product space $({\cal M}_2^\C(0))^{\times d}$ we use
\begin{align*}
\ell_2^\vee((\nu_1,\ldots,\nu_{d}),(\varrho_1,\ldots,\varrho_{d})):=
\max_{1\le j\le d} \ell_2(\nu_j, \varrho_j),
\end{align*}
where $(\nu_1,\ldots,\nu_d), (\mu_1,\ldots,\mu_d) \in ({\cal M}^\C_2(0))^{\times d}$. Note that
$({\cal M}^\C_2(0))^{\times d}, \ell_2^\vee)$ is a complete metric space as well. \\

\noindent
{\bf Preview on the use of spaces and metrics.}
The guidance on which space and metric to use in which asymptotic regime of  P\'olya urns is as follows. We come back to the  three types (a)--(c) of urns from the previous section:
\begin{itemize}
\item[(a)] Urns that after scaling lead to convergence  to a non-normal limit distribution. Typically
 such a  convergence holds almost surely, however we only discuss convergence in distribution.
\item[(b)] Urns that after scaling lead to convergence  to a normal limit. Such a convergence typically does not hold almost surely, but at least in distribution.
\item[(c)] Urns that even after a proper scaling do not lead to convergence. Instead there is an asymptotic oscillatory behavior of the distributions. Such  oscillatory behavior can even be captured almost surely,  we discuss a (weak) description for distributions.
\end{itemize}
The cases of {\bf type (a)} can be dealt with on the space $({\cal M}^\R_2(\mu))^{\times d}$ with appropriate $\mu\in\R$ and $d\in\N$, where, by centering, one can always achieve the choice $\mu=0$. One can either use the metric $\zeta_2^\vee$ or $\ell_2^\vee$ which lead to similar results, although based on different details in the  proofs. We will only present the use of $\zeta_2^\vee$, since we then can easily extend the argument also to the type (b) cases by switching from $\zeta_2^\vee$ to $\zeta_3^\vee$. This leads to a  more concise presentation. However, the $\ell_2^\vee$ metric appears to be equally convenient to apply in type (a) cases to us.

The cases of {\bf type (b)} can  be dealt with on the space $({\cal M}^\R_s(\mu,\sigma^2))^{\times d}$ with $2<s\le 3$ and appropriate $\mu\in\R$, $\sigma>0$ and $d\in\N$. By normalization, one can always achieve the choices $\mu=0$ and $\sigma=1$. Since in the context of urns third absolute moments in type (b) cases typically do exist, one can use $s=3$ and the metric $\zeta_3^\vee$. We do not know how to use the $\ell_p^\vee$ metrics in type (b) cases.

The cases of {\bf type (c)}  can  be dealt with on the space $({\cal M}^\C_2(\gamma))^{\times d}$  with appropriate $\gamma\in\R$ and $d\in\N$. The metric used subsequently in  type (c) cases is the complex version of $\ell_2^\vee$. In our example below we will however use ${\cal M}^\C_2(\gamma_1) \times \cdots \times {\cal M}^\C_2(\gamma_d)$ with $\gamma_1,\ldots,\gamma_d\in \C$ to be able to work with a more natural scaling of the random variables, the metric still being $\ell_2^\vee$. We think that also $\zeta_2^\vee$ can be used in type (c) cases but did not check the details since the application of $\ell_2^\vee$ is straightforward.

\section{Associated fixed point equations}\label{assofix}
We fix $d,d'\in \N$, a $d\times d'$ matrix $(A_{ir})$ of random
variables and a vector $(b_1,\ldots,b_d)$ of random variables. Either
all of these random variables are real or all of them are complex. Furthermore, we are given a $d\times d'$ matrix $(\pi(i,r))$ with all entries $\pi(i,r)\in\{1,\ldots,d\}$. First, we consider the case, where all $A_{ir}$ and all $b_i$ are real. We associate a map
\begin{align}
T: \left({\cal M}^{\R}\right)^{\times d}
&\to \left({\cal M}^{\R}\right)^{\times d} \nonumber\\
(\mu_1,\ldots,\mu_{d}) &\mapsto (T_1(\mu_1,\ldots,\mu_{d}),\ldots, T_{d}(\mu_1,\ldots,\mu_{d})) \label{defT1} \\
T_i(\mu_1,\ldots,\mu_{d}) &:= {\cal L}\left(\sum_{r=1}^{d'} A_{ir} Z_{ir} + b_i\right)
\label{defT2}
\end{align}
with $(A_{i1},\ldots,A_{id'},b_i)$, $Z_{i1},\ldots,Z_{id'}$ independent, and $Z_{ir}$ distributed as $\mu_{\pi(i,r)}$ for $r=1,\ldots,d'$ and all components $i=1,\ldots,d$.

In the case, where the $A_{ir}$  and $b_i$ are complex random variables, we define
a map $T'$ similar to $T$:
\begin{align}
T': \left({\cal M}^{\C}\right)^{\times d}
&\to \left({\cal M}^{\C}\right)^{\times d} \label{defT'}\\
(\mu_1,\ldots,\mu_{d}) &\mapsto (T'_1(\mu_1,\ldots,\mu_{d}),\ldots, T'_{d}(\mu_1,\ldots,\mu_{d}))  \nonumber
\end{align}
with $T'_i(\mu_1,\ldots,\mu_{d})$ defined as for $T_i$ in (\ref{defT2}).

For the three regimes discussed in the preview within section \ref{secspaces} we use the following three theorems (Theorem \ref{fp1} for type (a), Theorem \ref{fp2} for type (b), and Theorem \ref{fp3} for type (c)) on existence of fixed-points of $T$ and $T'$.

\begin{thm} \label{fp1} Assume that in the definition of $T$ in (\ref{defT1}) and (\ref{defT2}) the $A_{ir}$ and $b_i$ are square integrable real random variables
with  $\E[b_i]=0$ for all $1\le i\le d$ and $1\le r\le d'$ and
\begin{align} \label{contcond1}
\max_{1\le i\le d} \sum_{r=1}^{d'} \E\left[A_{ir}^2\right]&<1.
\end{align}
Then the restriction of $T$ to $({\cal M}^{\R}_2(0))^{\times d}$ has a unique fixed-point.
\end{thm}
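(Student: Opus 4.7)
The plan is to apply Banach's fixed-point theorem on the complete product space $\bigl(({\cal M}^\R_2(0))^{\times d},\zeta_2^\vee\bigr)$, whose completeness was recalled in Section~\ref{secspaces}.

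First I would verify that $T$ preserves the space. Given $(\mu_1,\ldots,\mu_d)\in({\cal M}^\R_2(0))^{\times d}$ and $Z_{ir}\sim\mu_{\pi(i,r)}$ independent of $(A_{ir},b_i)_r$, independence yields $\E[A_{ir}Z_{ir}]=\E[A_{ir}]\E[Z_{ir}]=0$ and $\E[(A_{ir}Z_{ir})^{2}]=\E[A_{ir}^{2}]\E[Z_{ir}^{2}]<\infty$; together with $\E[b_i]=0$ and square integrability of $b_i$, this shows $T_i(\mu_1,\ldots,\mu_d)\in{\cal M}^\R_2(0)$.

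The heart of the argument is the contraction estimate. Fix $\mu,\nu\in({\cal M}^\R_2(0))^{\times d}$ and realise two families of independent random variables $(Z_{ir})_r$, $(Z'_{ir})_r$ with $Z_{ir}\sim\mu_{\pi(i,r)}$ and $Z'_{ir}\sim\nu_{\pi(i,r)}$, both jointly independent of $(A_{ir},b_i)_r$. For fixed scalars $(a_1,\ldots,a_{d'},c)$, the translation invariance and scaling identity in (\ref{ideal}) together with the sum bound (\ref{sumabsch}) give
\begin{align*}
\zeta_2\Bigl(\sum_{r=1}^{d'} a_r Z_{ir}+c,\;\sum_{r=1}^{d'} a_r Z'_{ir}+c\Bigr)\le \sum_{r=1}^{d'} a_r^{2}\,\zeta_2(\mu_{\pi(i,r)},\nu_{\pi(i,r)}).
\end{align*}
For any $f\in{\cal F}_2$, conditioning on $(A_{ir},b_i)_r$ and using that the $Z_{ir},Z'_{ir}$ are independent of this $\sigma$-algebra, one bounds $|\E[f(\sum_r A_{ir}Z_{ir}+b_i)-f(\sum_r A_{ir}Z'_{ir}+b_i)]|$ by the expectation of the conditional $\zeta_2$-distance above; taking the supremum over $f$ and then the maximum over $i$ produces
\begin{align*}
\zeta_2^\vee(T(\mu),T(\nu))\le \Bigl(\max_{1\le i\le d}\sum_{r=1}^{d'}\E[A_{ir}^{2}]\Bigr)\zeta_2^\vee(\mu,\nu),
\end{align*}
which is a strict contraction by (\ref{contcond1}). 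Banach's fixed-point theorem then delivers the unique fixed point.

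The only delicate step is the conditioning argument. Because the $A_{ir}$ are random, the scaling identity $\zeta_2(aX,aY)=a^{2}\zeta_2(X,Y)$ from (\ref{ideal}) cannot be invoked on the products $A_{ir}Z_{ir}$ directly; it is the independence of $(Z_{ir})$ from $(A_{ir},b_i)$ that lets one freeze the scalars $a_r,c$, apply ideality to the conditional problem, and then integrate out the random coefficients. The remaining ingredients (preservation of the space, completeness of $\zeta_2^\vee$ on the product, and the sum bound for independent summands) are routine consequences of the material collected in Section~\ref{secspaces}.
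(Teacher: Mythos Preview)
Your proposal is correct and follows essentially the same route as the paper: verify that $T$ maps $({\cal M}^\R_2(0))^{\times d}$ into itself, condition on the random coefficients $(A_{ir},b_i)$ to reduce to deterministic scalars, apply the $(2,+)$-ideality of $\zeta_2$ together with the sum bound (\ref{sumabsch}), integrate out, take the maximum over $i$, and conclude via Banach's fixed-point theorem on the complete space $(({\cal M}^\R_2(0))^{\times d},\zeta_2^\vee)$. Your explicit remark on why the conditioning step is needed (randomness of the $A_{ir}$ precludes direct use of ideality) mirrors exactly the paper's derivation (\ref{fp1e1})--(\ref{fp1e3}).
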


\begin{thm} \label{fp2}Assume that in the definition of $T$ in (\ref{defT1}) and (\ref{defT2}) for some $\varepsilon>0$ the $A_{ir}$ are $L_{2+\varepsilon}$-integrable real random variables
and $b_i=0$ for all $1\le i\le d$ and $1\le r\le d'$, that almost surely
\begin{align}\label{normcoe}
 \sum_{r=1}^{d'} A_{ir}^2=1 \quad \mbox{ for all } i=1,\ldots,d
\end{align}
and
\begin{align}\label{techcon}
 \min_{1\le i\le d} \Prob\left(\max_{1\le r\le d'} |A_{ir}| <1\right) >0.
\end{align}
Then, for all $\sigma^2\ge 0$, the restriction of $T$ to $({\cal M}^{\R}_{2+\varepsilon}(0,\sigma^2))^{\times d}$
has the unique fixed-point $({\cal N}(0,\sigma^2),\ldots,{\cal N}(0,\sigma^2))$.
\end{thm}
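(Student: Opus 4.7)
\textbf{Proof plan for Theorem \ref{fp2}.} The plan is to apply a Banach fixed-point argument on the product space $({\cal M}^{\R}_{s}(0,\sigma^2))^{\times d}$ equipped with $\zeta_s^\vee$, where we choose $s:=\min(2+\varepsilon,3)\in(2,3]$ so that the Zolotarev metric is defined, matches first and second moments as required, and the $s$-th absolute moments of $A_{ir}$ exist. First I would verify that $T$ maps the space into itself: for $(\mu_1,\ldots,\mu_d)\in({\cal M}^{\R}_{s}(0,\sigma^2))^{\times d}$, the mean of the $i$-th component is $\sum_r\E[A_{ir}]\E[Z_{ir}]=0$, the variance is $\sigma^2\sum_r\E[A_{ir}^2]=\sigma^2\,\E\bigl[\sum_rA_{ir}^2\bigr]=\sigma^2$ by (\ref{normcoe}), and the $s$-th moment is finite by Minkowski and independence. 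Second, I would check that $(\cal N(0,\sigma^2),\ldots,\cal N(0,\sigma^2))$ is a fixed point: conditional on $A_{i\cdot}$, the sum $\sum_rA_{ir}Z_{ir}$ with $Z_{ir}\sim{\cal N}(0,\sigma^2)$ independent is again ${\cal N}(0,\sigma^2\sum_rA_{ir}^2)={\cal N}(0,\sigma^2)$ by (\ref{normcoe}), so the unconditional law is the same normal.

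The core of the argument is the contraction estimate. Using the $(s,+)$-ideality (\ref{ideal}) of $\zeta_s$ and conditioning on the coefficient vector $A_{i\cdot}$, one obtains as in \cite{NeRu04} the bound
\begin{align*}
\zeta_s(T_i(\mu_\ast),T_i(\nu_\ast))
\le\sum_{r=1}^{d'}\E\bigl[|A_{ir}|^s\bigr]\,\zeta_s\bigl(\mu_{\pi(i,r)},\nu_{\pi(i,r)}\bigr)
\le\Bigl(\sum_{r=1}^{d'}\E[|A_{ir}|^s]\Bigr)\,\zeta_s^\vee(\mu_\ast,\nu_\ast),
\end{align*}
so that $\zeta_s^\vee(T(\mu_\ast),T(\nu_\ast))\le\kappa\,\zeta_s^\vee(\mu_\ast,\nu_\ast)$ with $\kappa:=\max_{1\le i\le d}\sum_{r=1}^{d'}\E[|A_{ir}|^s]$. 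The main obstacle, and the point where the hypothesis (\ref{techcon}) is essential, is to show $\kappa<1$: here the $L_2$-contraction (\ref{contcond1}) is no longer available since (\ref{normcoe}) forces $\sum_r\E[A_{ir}^2]=1$. The key observation is that (\ref{normcoe}) implies $|A_{ir}|\le1$ almost surely for every $r$, hence $|A_{ir}|^s\le A_{ir}^2$ for all $r$ since $s>2$. On the event $\{\max_r|A_{ir}|<1\}$ guaranteed by (\ref{techcon}) to have positive probability, at least one $A_{ir}$ is nonzero (because $\sum_rA_{ir}^2=1$) and strictly less than $1$ in modulus, giving the strict inequality $\sum_r|A_{ir}|^s<\sum_rA_{ir}^2$ on this event. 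Integrating, $\sum_r\E[|A_{ir}|^s]<\E\bigl[\sum_rA_{ir}^2\bigr]=1$ for every $i$, hence $\kappa<1$.

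Finally, since both fixed points lie in ${\cal M}^{\R}_{s}(0,\sigma^2)$, $\zeta_s^\vee$ evaluated between them is finite (matching moments of orders $\le 2$ and finite $s$-th moment); the strict contraction then forces $\zeta_s^\vee((\mu_i)_i,({\cal N}(0,\sigma^2))_i)=0$, establishing uniqueness of the normal fixed point. Completeness of $({\cal M}^{\R}_{s}(0,\sigma^2))^{\times d}$ under $\zeta_s^\vee$ (inherited coordinatewise from \cite[Theorem 5.1]{DrJaNe08}) is only needed if one also wants existence via Banach's theorem; here existence is already given explicitly by the normal vector, so uniqueness is the only remaining ingredient.
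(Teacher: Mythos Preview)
Your proof is correct and follows essentially the same route as the paper: choose $s=(2+\varepsilon)\wedge3$, verify that $T$ preserves the space, use conditioning and the $(s,+)$-ideality of $\zeta_s$ to obtain the Lipschitz bound $\zeta_s^\vee(T(\mu_\ast),T(\nu_\ast))\le\bigl(\max_i\sum_r\E[|A_{ir}|^s]\bigr)\zeta_s^\vee(\mu_\ast,\nu_\ast)$, and check the normal vector is a fixed point via the convolution property of Gaussians. Your argument for why $\sum_r\E[|A_{ir}|^s]<1$ is in fact more explicit than the paper's, which simply asserts that (\ref{normcoe}) and (\ref{techcon}) together with $s>2$ imply this strict inequality without spelling out the mechanism ($|A_{ir}|\le1$ a.s.\ from (\ref{normcoe}), hence $|A_{ir}|^s\le A_{ir}^2$, with strict inequality somewhere on a set of positive probability).
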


\begin{thm} \label{fp3}Assume that in the definition of $T'$ in
  (\ref{defT'}) the $A_{ir}$ and $b_i$ are square integrable complex
  random variables for all $1\le i\le d$ and $1\le r\le d'$ and that
  for $\gamma_1,\ldots,\gamma_d\in\C$ we have
  \begin{align}\label{cccent}
 \E[b_i] + \sum_{r=1}^{d'} \gamma_{\pi(i,r)}\E[A_{ir}]&=\gamma_i, \quad i=1,\ldots,d.
\end{align}
If moreover
\begin{align}\label{cccont}
\max_{1\le i\le d} \sum_{r=1}^{d'} \E\left[|A_{ir}|^2\right]<1
\end{align}
then the restriction of $T'$ to ${\cal M}^{\C}_2(\gamma_1)\times \cdots \times {\cal M}^{\C}_2(\gamma_d) $ has a unique fixed-point.
\end{thm}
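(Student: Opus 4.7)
The plan is to equip the product space ${\cal M}^{\C}_2(\gamma_1)\times\cdots\times{\cal M}^{\C}_2(\gamma_d)$ with the complex version of the metric $\ell_2^\vee$, verify that $T'$ maps this space into itself, prove it is a strict contraction with contraction constant $\eta:=\max_{i}\sum_r \E[|A_{ir}|^2]<1$, and then invoke Banach's fixed point theorem (using the completeness statement from section \ref{secspaces}).

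First I would check that $T'$ preserves the space. Given $(\mu_1,\ldots,\mu_d)\in {\cal M}^{\C}_2(\gamma_1)\times\cdots\times{\cal M}^{\C}_2(\gamma_d)$, square-integrability of $T'_i(\mu_1,\ldots,\mu_d)$ follows from $\E[|A_{ir}|^2]<\infty$, $\E[|Z_{ir}|^2]<\infty$, $\E[|b_i|^2]<\infty$ together with independence of $A_{ir}$ and $Z_{ir}$ and the Cauchy--Schwarz inequality. For the mean, independence yields
\begin{equation*}
\E\!\left[\sum_{r=1}^{d'} A_{ir} Z_{ir}+b_i\right] = \sum_{r=1}^{d'} \E[A_{ir}]\,\gamma_{\pi(i,r)} + \E[b_i] = \gamma_i,
\end{equation*}
by the hypothesis (\ref{cccent}). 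Hence $T'_i(\mu_1,\ldots,\mu_d)\in {\cal M}^{\C}_2(\gamma_i)$ for each $i$.

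For the contraction estimate, fix two vectors $(\mu_1,\ldots,\mu_d)$ and $(\nu_1,\ldots,\nu_d)$ in the space, and for each pair $(i,r)$ take $(Z_{ir},Z'_{ir})$ to be an optimal $\ell_2$-coupling of $\mu_{\pi(i,r)}$ and $\nu_{\pi(i,r)}$, with the couplings taken independent across $r$ and also independent of $(A_{i1},\ldots,A_{id'},b_i)$. Then $V:=\sum_r A_{ir} Z_{ir}+b_i$ and $W:=\sum_r A_{ir} Z'_{ir}+b_i$ realize the marginals $T'_i(\mu_\cdot)$ and $T'_i(\nu_\cdot)$, so writing $W_{ir}:=Z_{ir}-Z'_{ir}$,
\begin{equation*}
\ell_2\bigl(T'_i(\mu_\cdot),T'_i(\nu_\cdot)\bigr)^2 \le \E\bigl[|V-W|^2\bigr] = \sum_{r,s=1}^{d'} \E[A_{ir}\overline{A_{is}}]\,\E[W_{ir}\overline{W_{is}}].
\end{equation*}
The key observation, and the place where the hypothesis (\ref{cccent}) is really used, is that both $\mu_{\pi(i,r)}$ and $\nu_{\pi(i,r)}$ lie in ${\cal M}^{\C}_2(\gamma_{\pi(i,r)})$, so $\E[W_{ir}]=0$; combined with independence of $(Z_{ir},Z'_{ir})$ across $r$, this makes the off-diagonal terms ($r\neq s$) vanish. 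The diagonal terms give $\sum_r \E[|A_{ir}|^2]\,\ell_2(\mu_{\pi(i,r)},\nu_{\pi(i,r)})^2$, so
\begin{equation*}
\ell_2\bigl(T'_i(\mu_\cdot),T'_i(\nu_\cdot)\bigr)^2 \le \Bigl(\sum_{r=1}^{d'}\E[|A_{ir}|^2]\Bigr)\bigl(\ell_2^\vee((\mu_j),(\nu_j))\bigr)^2.
\end{equation*}
Taking the maximum over $i\in\{1,\ldots,d\}$ on the left and using (\ref{cccont}) on the right yields $\ell_2^\vee(T'(\mu_\cdot),T'(\nu_\cdot))\le \sqrt{\eta}\,\ell_2^\vee((\mu_j),(\nu_j))$ with $\sqrt{\eta}<1$. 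Since the product space is complete, Banach's fixed point theorem supplies the unique fixed point.

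The main obstacle is less a difficulty than a subtlety: the cancellation of the off-diagonal cross terms only works because both input vectors share the same mean vector $(\gamma_1,\ldots,\gamma_d)$ at every coordinate, which is precisely why the restriction to ${\cal M}^{\C}_2(\gamma_1)\times\cdots\times{\cal M}^{\C}_2(\gamma_d)$ (rather than all of $({\cal M}^{\C}_2)^{\times d}$) appears in the statement and why (\ref{cccent}) is imposed to ensure invariance.
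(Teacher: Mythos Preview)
Your proposal is correct and follows essentially the same route as the paper: invariance of ${\cal M}^{\C}_2(\gamma_1)\times\cdots\times{\cal M}^{\C}_2(\gamma_d)$ via (\ref{cccent}), then the $\ell_2^\vee$ contraction estimate using optimal $\ell_2$-couplings $(Z_{ir},Z'_{ir})$ taken independent across $r$ and independent of the coefficients, with the off-diagonal terms vanishing because the differences $Z_{ir}-Z'_{ir}$ are centered, and finally Banach's fixed-point theorem. The only cosmetic difference is that you factor the cross terms as $\E[A_{ir}\overline{A_{is}}]\,\E[W_{ir}\overline{W_{is}}]$ before discarding them, whereas the paper keeps $A_{ir}(Z_{ir}-Z'_{ir})$ together and argues directly that the centered independent factors kill the $r\neq s$ part.
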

Note that a special case of Theorem \ref{fp1} was used in the proof of  \cite[Theorem 3.9 (iii)]{Ja04} with a similar proof technique as in our proof of Theorem \ref{fp3}.

The rest of this section contains the proofs of Theorems \ref{fp1} -- \ref{fp3}.

\noindent
\begin{proof} {\em (Theorem \ref{fp1}).}
First note that for $(\mu_1,\ldots,\mu_d)\in ({\cal M}^{\R}_2(0))^{\times d}$,
by independence in definition (\ref{defT2}) and $\E[b_i]=0$ we have
$T_i(\mu_1,\ldots,\mu_d) \in {\cal M}^{\R}_2(0)$ for $i=1,\ldots,d$.
Hence, the restriction of $T$ to $({\cal M}^{\R}_2(0))^{\times d}$ maps
into $({\cal M}^{\R}_2(0))^{\times d}$.

Next, we show that the restriction of  $T$ to $({\cal M}^{\R}_2(0))^{\times d}$
is a (strict) contraction with respect to the metric $\zeta_2^\vee$: For
$(\mu_1,\ldots,\mu_d), (\nu_1,\ldots,\nu_d)\in ({\cal M}^{\R}_2(0))^{\times d}$
we first fix $i\in\{1,\ldots,d\}$. Let  $Z_{i1},\ldots,Z_{id'}$ and $Z'_{i1},\ldots,Z'_{id'}$ be real random variables
such that   $Z_{ir}$ is distributed as $\mu_{\pi(i,r)}$ and  $Z'_{ir}$ is distributed
as $\nu_{\pi(i,r)}$. Moreover, assume that
both families $\{(A_{i1},\ldots,A_{id'},b_i)$, $Z_{i1},\ldots,Z_{id'}\}$
and $\{(A_{i1},\ldots,A_{id'},b_i)$, $Z'_{i1},\ldots,Z'_{id'}\}$ are independent.
Then we have
\begin{align}\label{darim}
T_i(\mu_1,\ldots,\mu_d) = {\cal L}\left(\sum_{r=1}^{d'} A_{ir}Z_{ir} +b_i\right), \quad
T_i(\nu_1,\ldots,\nu_d) = {\cal L}\left(\sum_{r=1}^{d'} A_{ir}Z'_{ir} +b_i\right).
\end{align}
Conditioning on $(A_{i1},\ldots,A_{id'},b_i)$ and denoting this vector's distribution by $\Upsilon$ we obtain
\begin{align}\lefteqn{
    \zeta_2(T_i(\mu_1,\ldots,\mu_d),T_i(\nu_1,\ldots,\nu_d))} \nonumber\\
  &= \sup_{f\in{\cal F}_2}\biggl|\int{
    \Erw{
      f\Bigl(\sum_{r=1}^{d'}\alpha_r Z_{ir}+\beta\Bigr)
      -f\Bigl(\sum_{r=1}^{d'}\alpha_r Z'_{ir}+\beta\Bigr)}
  }\,d\Upsilon(\alpha_1,\ldots,\alpha_{d'},\beta)\biggr|\nonumber\\
  &\leq \int{ \sup_{f\in{\cal F}_2}
    \Erw{\biggl|
      f\biggl(\sum_{r=1}^{d'}\alpha_r Z_{ir}+\beta\biggr)
      -f\biggl(\sum_{r=1}^{d'}\alpha_r Z'_{ir}+\beta\biggr)
      \biggr|}
  }\,d\Upsilon(\alpha_1,\ldots,\alpha_{d'},\beta)\nonumber\\
   &=\int \zeta_2\left(\sum_{r=1}^{d'}\alpha_r Z_{ir} +\beta,\sum_{r=1}^{d'}\alpha_r Z'_{ir} +\beta\right) \,d\Upsilon(\alpha_1,\ldots,\alpha_{d'},\beta) \label{fp1e1}
\end{align}
Since $\zeta_2$ is $(2,+)$-ideal, we obtain from (\ref{ideal}) that
$\zeta_2(\sum \alpha_r Z_{ir} +\beta,\sum \alpha_r Z'_{ir} +\beta)\le
\sum \alpha_r^2 \zeta_2(Z_{ir},Z'_{ir})$. Hence, we can further
estimate
\begin{align}
\lefteqn{
\zeta_2(T_i(\mu_1,\ldots,\mu_d),T_i(\nu_1,\ldots,\nu_d))}\nonumber\\
&\le \int \sum_{r=1}^{d'} \alpha_r^2 \zeta_2(Z_{ir},Z'_{ir})\,d\Upsilon(\alpha_1,\ldots,\alpha_{d'},\beta)\nonumber\\
&= \int \sum_{r=1}^{d'} \alpha_r^2 \zeta_2(\mu_{\pi(i,r)},\nu_{\pi(i,r)})
\,d\Upsilon(\alpha_1,\ldots,\alpha_{d'},\beta)\nonumber\\
&\le \left(\sum_{r=1}^{d'} \E\left[A_{ir}^2\right]\right) \zeta_2^\vee((\mu_1,\ldots,\mu_d),(\nu_1,\ldots,\nu_d)). \label{fp1e2}
\end{align}
Now, taking the maximum over $i$ yields
\begin{align}
  \zeta_2^\vee(T(\mu_1,\ldots,\mu_d),T(\nu_1,\ldots,\nu_d)) \le
  \left(\max_{1\le i\le d}\sum_{r=1}^{d'}
    \E\left[A_{ir}^2\right]\right)
  \zeta_2^\vee((\mu_1,\ldots,\mu_d),(\nu_1,\ldots,\nu_d)).\label{fp1e3}
\end{align}
Hence, condition (\ref{contcond1}) implies that the restriction of $T$
to $({\cal M}^{\R}_2(0))^{\times d}$ is a contraction. Since the
metric $\zeta_2^\vee$ is complete, Banach's fixed-point theorem
implies the assertion.
\end{proof}

\noindent
\begin{proof} {\em (Theorem \ref{fp2}).}
This proof is similar to the previous proof of Theorem \ref{fp1}. Let $\varepsilon>0$ be as in Theorem \ref{fp2} and $\sigma>0$ be arbitrary.  First note that for $(\mu_1,\ldots,\mu_d)\in ({\cal M}^{\R}_{2+\varepsilon}(0,\sigma^2))^{\times d}$,
by independence in definition (\ref{defT2}), condition (\ref{normcoe}), and $b_i=0$ we have
$T_i(\mu_1,\ldots,\mu_d) \in {\cal M}^{\R}_{2+\varepsilon}(0,\sigma^2)$ for $i=1,\ldots,d$.
Hence, the restriction of $T$ to $({\cal M}^{\R}_{2+\varepsilon}(0,\sigma^2))^{\times d}$ maps
into $({\cal M}^{\R}_{2+\varepsilon}(0,\sigma^2))^{\times d}$.

We set $s:=(2+\varepsilon) \wedge 3$.
For
$(\mu_1,\ldots,\mu_d), (\nu_1,\ldots,\nu_d)\in ({\cal M}^{\R}_{2+\varepsilon}(0,\sigma^2))^{\times d}$  we choose $Z_{i1},\ldots,Z_{id'}$ and $Z'_{i1},\ldots,Z'_{id'}$ as in the proof of Theorem \ref{fp1} such that we have (\ref{darim}). Note that with our choice of $s$ we have $\zeta_s(T_i(\mu_1,\ldots,\mu_d),T_i(\nu_1,\ldots,\nu_d))<\infty$. With an estimate analogous to (\ref{fp1e1}) -- (\ref{fp1e3}), using now that $\zeta_s$ is $(s,+)$-ideal, we obtain
\begin{align*}
\zeta_s^\vee(T(\mu_1,\ldots,\mu_d),T(\nu_1,\ldots,\nu_d))
\le \left(\max_{1\le i\le d}\sum_{r=1}^{d'} \E\left[|A_{ir}|^s\right]\right) \zeta_s^\vee((\mu_1,\ldots,\mu_d),(\nu_1,\ldots,\nu_d)).
\end{align*}
Note that $s>2$ and the conditions (\ref{normcoe}) and (\ref{techcon}) imply that $\sum_{r=1}^{d'} \E[|A_{ir}|^s]<1$ for all $i=1,\ldots,d$.
Hence,  the restriction of  $T$ to $({\cal M}^{\R}_{2+\varepsilon}(0,\sigma^2))^{\times d}$
is a contraction and the completeness of $\zeta_s^\vee$ implies the existence of a unique fixed-point. With the convolution property ${\cal N}(0,\sigma_1^2)\ast {\cal N}(0,\sigma_2^2)={\cal N}(0,\sigma_1^2+\sigma_2^2)$ for $\sigma_1,\sigma_2\ge 0$ one can directly check that
$({\cal N}(0,\sigma^2),\ldots,{\cal N}(0,\sigma^2))$ is a fixed-point of $T$ in $({\cal M}^{\R}_{2+\varepsilon}(0,\sigma^2))^{\times d}$.
\end{proof}

\begin{proof}{\em (Theorem \ref{fp3}).}
Let $\gamma_1,\ldots,\gamma_d$ be as in Theorem \ref{fp3} and abbreviate ${\cal P}:={\cal M}^{\C}_2(\gamma_1)\times \cdots \times {\cal M}^{\C}_2(\gamma_d)$.
First note that for $(\mu_1,\ldots,\mu_d)\in {\cal P}$ from independence in  the definition of $T'_i(\mu_1,\ldots,\mu_{d})$ and the finite second moments of the $A_{ir}$ and $b_i$ we obtain $T'_i(\mu_1,\ldots,\mu_{d})\in {\cal M}^\C_2$ for all $i=1,\ldots,d$. For a random variable $W$ with distribution $T'_i(\mu_1,\ldots,\mu_{d})$ we have
\begin{align*}
\E[W]=\sum_{r=1}^{d'} \E[A_{ir}]\gamma_{\pi(i,r)}+\E[b_i] =\gamma_i
\end{align*}
by condition (\ref{cccent}). Hence, the restriction of $T'$ to ${\cal P}$ maps into ${\cal P}$.

Next, we show that the restriction of  $T'$ to ${\cal P}$
is a  contraction with respect to the metric $\ell_2^\vee$: For
$(\mu_1,\ldots,\mu_d), (\nu_1,\ldots,\nu_d)\in {\cal P}$
we first fix $i\in\{1,\ldots,d\}$. Let  $(Z_{ir},Z'_{ir})$ be an optimal coupling of $\mu_{\pi(i,r)}$ and  $\nu_{\pi(i,r)}$ for $r=1,\ldots,d'$ such that   $(Z_{i1},Z'_{i1}),\ldots,(Z_{id'},Z'_{id'}),(A_{i1},\ldots,A_{id'},b_i)$ are independent.
Then we have
\begin{align}\label{darim2}
T'_i(\mu_1,\ldots,\mu_d) = {\cal L}\left(\sum_{r=1}^{d'} A_{ir}Z_{ir} +b_i\right), \quad
T'_i(\nu_1,\ldots,\nu_d) = {\cal L}\left(\sum_{r=1}^{d'} A_{ir}Z'_{ir} +b_i\right).
\end{align}
Denoting by $\overline{\gamma}$ the complex conjugate of $\gamma\in\C$ we obtain
\begin{align}
\lefteqn{\ell_2^2(T'_i(\mu_1,\ldots,\mu_{d}),T'_i(\nu_1,\ldots,\nu_{d}))}\nonumber\\
&\le \E\left[\left|\sum_{r=1}^{d'}  A_{ir}  (Z_{ir}-Z'_{ir})\right|^2\right] \nonumber\\
&= \E\left[ \sum_{r=1}^{d'}  |A_{ir}|^2  |Z_{ir}-Z'_{ir}|^2    \right]+  \E\left[\sum_{r\neq t}  A_{ir} (Z_{ir}-Z'_{ir})   \overline{A_{it} (Z_{it}-Z'_{it})}\right]\nonumber\\
&=\sum_{r=1}^{d'}  \E\left[|A_{ir}|^2\right] \ell_2^2(\mu_{\pi(i,r)}, \nu_{\pi(i,r)}) \label{251212}\\
&\le \left(\sum_{r=1}^{d'}  \E\left[|A_{ir}|^2\right]\right)  \left(\ell_2^\vee((\mu_1,\ldots,\mu_{d}),(\nu_1,\ldots,\nu_{d}))\right)^2.\nonumber
\end{align}
For equality (\ref{251212}) we firstly use that $Z_{ir}-Z'_{ir}$ and $Z_{it}-Z'_{it}$ are independent, centered factors, so that the expectation of the sum over $r\neq t$ is $0$ and secondly that $(Z_{ir},Z'_{ir})$ are optimal couplings of $(\mu_{\pi(i,r)}, \nu_{\pi(i,r)})$ such that $\E[|Z_{ir}-Z'_{ir}|^2]=\ell_2^2(\mu_{\pi(i,r)}, \nu_{\pi(i,r)})$.

Now, taking the maximum over $i$ yields
\begin{align*}
\ell_2^\vee(T'(\mu_1,\ldots,\mu_d),T'(\nu_1,\ldots,\nu_d))
\le \left(\max_{1\le i\le d}\sum_{r=1}^{d'} \E\left[|A_{ir}|^2\right]\right)^{1/2} \ell_2^\vee((\mu_1,\ldots,\mu_d),(\nu_1,\ldots,\nu_d)).
\end{align*}
Hence, condition (\ref{cccont}) implies that the restriction of  $T'$ to ${\cal P}$
is a contraction. Since  the metric $\ell_2^\vee$ is complete, Banach's fixed-point
theorem implies the assertion.
\end{proof}

\section{Convergence and examples}\label{conexp}
In this section a couple of concrete P\'olya urns are considered and convergence of the normalized
numbers of balls of a color is shown within the product metrics defined in section \ref{secspaces}. The proofs are generic such that they can easily be transferred to other urns  of the types (a)--(c) in section \ref{limiteq}. We always show limit laws for the initial compositions of the urn with one ball of (arbitrary) color. Limit laws for other initial compositions can be obtained from these by appropriate convolution with coefficients which are powers of components of an independent Dirichlet distributed vector. We leave the details to the reader.

\subsection{$\mathbf{2 \times 2}$ deterministic replacement urns}  \label{ex1sec}
A discussion of urns with a general balanced $2\times 2$ replacement matrix as in (\ref{rep2x2}) is given in Bagchi and Pal \cite{BagPal85}. Subsequently, we assume the conditions in (\ref{rep2x2}) and, as in \cite{BagPal85}, that  $bc>0$.
As shown in \cite{BagPal85} asymptotic normal behavior occurs for these urns when $a-c\le (a+b)/2$
(type (b) in section \ref{secspaces}), whereas $a-c> (a+b)/2$ leads to limit laws with non-normal limit distributions (type (a) in section \ref{secspaces}). In this section we show how to derive these results by our contraction approach.  With $B_n^\mathrm{b}$ and $B_n^\mathrm{w}$ as in the beginning of section \ref{appr} we denote expectations by $\mu_\mathrm{b}(n)$ and $\mu_\mathrm{w}(n)$.
These values can be derived exactly, see \cite{BagPal85},
\begin{align}
\mu_\mathrm{b}(n)&=
\frac{c(a+b)}{b+c} n
    +\frac{b\,\Gamma\left(\frac{1}{a+b}\right)}
        {(b+c)\Gamma\left(\frac{1+a-c}{a+b}\right)}
      \frac{\Gamma\left(n+\frac{1+a-c}{a+b}\right)}{\Gamma\left(n+\frac1{a+b}\right)}
    +\frac{c}{b+c},   \label{bagpalfor1}\\
\mu_\mathrm{w}(n)&=\frac{c(a+b)}{b+c} n
    -\frac{c\,\Gamma\left(\frac{1}{a+b}\right)}
        {(b+c)\Gamma\left(\frac{1+a-c}{a+b}\right)}
      \frac{\Gamma\left(n+\frac{1+a-c}{a+b}\right)}{\Gamma\left(n+\frac1{a+b}\right)}
    +\frac{c}{b+c}.\label{bagpalfor2}
\end{align}
{\bf Non-normal limit case.}
We first discuss the non-normal case $a-c> (a+b)/2$.  Note that with $\lambda:=(a-c)/(a+b)$ and excluding the case $bc=0$, we have $1/2 < \lambda < 1$ and, as $n\to\infty$,
\begin{align}\label{expexp13}
\mu_\mathrm{b}(n)=
c_\mathrm{b} n
    +d_\mathrm{b} n^\lambda + o(n^\lambda),\quad
\mu_\mathrm{w}(n)=c_\mathrm{w} n
   +d_\mathrm{w} n^\lambda + o(n^\lambda)
\end{align}
with
\begin{align}\label{consts}
 c_\mathrm{b}=c_\mathrm{w}=\frac{c(a+b)}{b+c},\quad    d_\mathrm{b}=  \frac{b\,\Gamma\left(\frac{1}{a+b}\right)}
        {(b+c)\Gamma\left(\frac{1+a-c}{a+b}\right)},\quad
d_\mathrm{w}=-\frac{c\,\Gamma\left(\frac{1}{a+b}\right)}
        {(b+c)\Gamma\left(\frac{1+a-c}{a+b}\right)}.
\end{align}
We use the normalizations $X_0:=Y_0:=0$ and, cf.~\eqref{scala},
\begin{align}\label{normcase11}
X_n:=\frac{B_n^\mathrm{b}-\mu_\mathrm{b}(n)}{n^\lambda},\quad
Y_n:=\frac{B_n^\mathrm{w}-\mu_\mathrm{w}(n)}{n^\lambda}, \quad n\ge 1.
\end{align}
Note that we do not have to identify the order of the variance in advance. It turns out that
 it is sufficient to use the order of the error terms $d_\mathrm{b} n^\lambda$ and $d_\mathrm{w} n^\lambda$ in the expansions (\ref{expexp13}). From the system (\ref{rde1})--(\ref{rde2}) we obtain for the scaled quantities $X_n$, $Y_n$ the system, for $n\ge 1$,
\begin{align}\label{rde1mod}
X_n &\stackrel{d}{=} \sum_{r=1}^{a+1} \left(\frac{I^{(n)}_r}{n}\right)^\lambda X^{(r)}_{I^{(n)}_r} + \sum_{r=a+2}^{K} \left(\frac{I^{(n)}_r}{n}\right)^\lambda Y^{(r)}_{I^{(n)}_r} + b_\mathrm{b}(n), \\
Y_n &\stackrel{d}{=} \sum_{r=1}^{c} \left(\frac{I^{(n)}_r}{n}\right)^\lambda
 X^{(r)}_{I^{(n)}_r} +  \sum_{r=c+1}^{K}
\left(\frac{I^{(n)}_r}{n}\right)^\lambda Y^{(r)}_{I^{(n)}_r} + b_\mathrm{w}(n),\label{rde1modb}
\end{align}
with
 \begin{align}\label{defbbbw}
b_\mathrm{b}(n)&=d_\mathrm{b}\left(-1+\sum_{r=1}^{a+1} \left(\frac{I^{(n)}_r}{n}\right)^\lambda \right)+ d_\mathrm{w}\sum_{r=a+2}^{K} \left(\frac{I^{(n)}_r}{n}\right)^\lambda+o(1),\\
b_\mathrm{w}(n)&=d_\mathrm{b} \sum_{r=1}^{c} \left(\frac{I^{(n)}_r}{n}\right)^\lambda
 + d_\mathrm{w}\left(-1+ \sum_{r=c+1}^{K}  \left(\frac{I^{(n)}_r}{n}\right)^\lambda \right)+o(1),\label{defbbbwb}
\end{align}
with conditions on independence between the $X^{(r)}_j$,$Y^{(r)}_j$ and $I^{(n)}$ and identical distributions of the $X^{(r)}_j$ and $Y^{(r)}_j$ analogously to (\ref{rde1}) and (\ref{rde2}). The $o(1)$ terms in (\ref{defbbbw}) and (\ref{defbbbwb}) are deterministic functions of $I^{(n)}$. In view of Lemma \ref{asysubsize} this suggests for limits $X$ and $Y$ of $X_n$ and $Y_n$ respectively
\begin{align}\label{rde1limit1}
X&\stackrel{d}{=} \sum_{r=1}^{a+1}D_r^\lambda X^{(r)} + \sum_{r=a+2}^{K} D_r^\lambda Y^{(r)} + b_\mathrm{b}, \\
Y &\stackrel{d}{=} \sum_{r=1}^{c}D_r^\lambda
 X^{(r)} +  \sum_{r=c+1}^{K}
D_r^\lambda Y^{(r)} + b_\mathrm{w},\label{rde1limit2}
\end{align}
with
 \begin{align*}
b_\mathrm{b}&=d_\mathrm{b}\left(-1+\sum_{r=1}^{a+1}D_r^\lambda \right)+ d_\mathrm{w}\sum_{r=a+2}^{K} D_r^\lambda,\\
b_\mathrm{w}&=d_\mathrm{b} \sum_{r=1}^{c}D_r^\lambda
 + d_\mathrm{w}\left(-1+ \sum_{r=c+1}^{K}  D_r^\lambda \right),
\end{align*}
where $(D_1,\ldots,D_K)$, $X^{(1)},\ldots,X^{(K)}$, $Y^{(1)},\ldots,Y^{(K)}$ are independent, and the $X^{(r)}$ are distributed as $X$, the $Y^{(r)}$ are distributed as $Y$ and  $(D_1,\ldots,D_K)$ is as in Lemma \ref{asysubsize}.
 Note that the moments $\E[D_r^\lambda]$ and the form of $d_\mathrm{b}$ and $d_\mathrm{w}$ in (\ref{consts}) imply $\E[b_\mathrm{b}]=\E[b_\mathrm{w}]=0$.
 From $\lambda>1/2$ and $\sum_{r=1}^K D_r=1$ we obtain
 \begin{align*}
\sum_{r=1}^{K} \E\left[D_r^{2\lambda}\right] <1.
\end{align*}
Hence, Theorem \ref{fp1} applies to the map associated to the system (\ref{rde1limit1})--(\ref{rde1limit2}) and implies that there exists a unique solution $({\cal L}(\Lambda_\mathrm{b}),{\cal L}(\Lambda_\mathrm{w}))$ in the space ${\cal M}^\R_2(0)\times {\cal M}^\R_2(0)$ to (\ref{rde1limit1})--(\ref{rde1limit2}). The following convergence proof resembles ideas from Neininger and R\"uschendorf \cite{NeRu04}.
\begin{thm}\label{22nnc}
Consider the P\'olya urn with replacement matrix (\ref{rep2x2}) with $a-c> (a+b)/2$ and $bc>0$ and the normalized numbers $X_n$ and $Y_n$  of black balls as in (\ref{normcase11}). Furthermore let  $({\cal L}(\Lambda_\mathrm{b}),{\cal L}(\Lambda_\mathrm{w}))$ denote the in ${\cal M}^\R_2(0)\times {\cal M}^\R_2(0)$ unique solution of (\ref{rde1limit1})--(\ref{rde1limit2}).
Then, as $n\to \infty$,
\begin{align*}
\zeta_2^\vee\left( (X_n,Y_n), (\Lambda_\mathrm{b},\Lambda_\mathrm{w})\right)  \to 0.
\end{align*}
In particular,  as $n\to \infty$,
\begin{align} \label{limperib}
X_n \stackrel{d}{\longrightarrow} \Lambda_\mathrm{b}, \quad  Y_n \stackrel{d}{\longrightarrow} \Lambda_\mathrm{w}.
\end{align}
\end{thm}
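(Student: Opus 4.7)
The plan is to carry out a contraction argument in the complete metric space $(({\cal M}^\R_2(0))^{\times 2}, \zeta_2^\vee)$, patterned on the proof of Theorem \ref{fp1}. Write $R(n):=\zeta_2^\vee((X_n,Y_n),(\Lambda_\mathrm{b},\Lambda_\mathrm{w}))$. A preliminary step is to check that $X_n, Y_n$ have mean zero and finite second moment (so that $\zeta_2$ is finite), together with a uniform a priori bound $\sup_{n\ge 0} (\E[X_n^2]+\E[Y_n^2])<\infty$. The mean-zero property is immediate from the centering in (\ref{normcase11}); the uniform bound I would derive by induction from (\ref{rde1mod})--(\ref{rde1modb}) using Lemma \ref{asysubsize} and the strict inequality $\sum_{r=1}^K\E[D_r^{2\lambda}]<1$. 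This preparatory moment estimate is the main technical nuisance I expect, since it also requires controlling the $L_2$-norm of the noise terms $b_\mathrm{b}(n),b_\mathrm{w}(n)$ uniformly in $n$ (where the hidden $o(1)$ in (\ref{defbbbw})--(\ref{defbbbwb}) must be handled carefully).

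Next I would introduce an accompanying sequence. With $(D_1,\ldots,D_K)$ as in Lemma \ref{asysubsize} and independent copies $\Lambda_\mathrm{b}^{(1)},\ldots,\Lambda_\mathrm{b}^{(K)}$, $\Lambda_\mathrm{w}^{(1)},\ldots,\Lambda_\mathrm{w}^{(K)}$ of $\Lambda_\mathrm{b}$ respectively $\Lambda_\mathrm{w}$, set
\[
\hat X_n := \sum_{r=1}^{a+1} (I_r^{(n)}/n)^\lambda \Lambda_\mathrm{b}^{(r)} + \sum_{r=a+2}^{K} (I_r^{(n)}/n)^\lambda \Lambda_\mathrm{w}^{(r)} + b_\mathrm{b}(n),
\]
and define $\hat Y_n$ analogously, taking all $\Lambda$-variables independent of $I^{(n)}$. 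The triangle inequality gives $\zeta_2(X_n,\Lambda_\mathrm{b}) \le \zeta_2(X_n,\hat X_n) + \zeta_2(\hat X_n,\Lambda_\mathrm{b})$, and similarly for $Y_n$.

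For the first term, conditioning on $I^{(n)}$ and invoking the $(2,+)$-ideality (\ref{ideal}) and the independent-sum bound (\ref{sumabsch}), exactly as in (\ref{fp1e1})--(\ref{fp1e3}), yields
\[
\zeta_2(X_n,\hat X_n) \le \sum_{r=1}^{K} \E\!\left[(I_r^{(n)}/n)^{2\lambda}\, R(I_r^{(n)})\right],
\]
after majorizing each conditional $\zeta_2(X^{(r)}_{I_r^{(n)}},\Lambda_\mathrm{b}^{(r)})$ or $\zeta_2(Y^{(r)}_{I_r^{(n)}},\Lambda_\mathrm{w}^{(r)})$ by $R(I_r^{(n)})$; the analogous bound holds for $\zeta_2(Y_n,\hat Y_n)$. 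For the second term, both $\hat X_n$ and $\Lambda_\mathrm{b}$ are sums of the same structural form differing only in their coefficient vectors ($((I_r^{(n)}/n)^\lambda)_r$ versus $(D_r^\lambda)_r$) and in the noise ($b_\mathrm{b}(n)$ versus $b_\mathrm{b}$). Bounding $\zeta_2$ by $\ell_2$ via (\ref{zetaellest}), coupling both coefficient vectors on a common probability space using the $L_2$-convergence in Lemma \ref{asysubsize}, and using the finite second moments of $\Lambda_\mathrm{b},\Lambda_\mathrm{w}$ together with the a priori bound on the $X_n,Y_n$, one concludes $\varepsilon_n := \zeta_2^\vee((\hat X_n,\hat Y_n),(\Lambda_\mathrm{b},\Lambda_\mathrm{w}))\to 0$.

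Combining these estimates produces the recursive inequality $R(n) \le \E\!\left[\sum_{r=1}^{K}(I_r^{(n)}/n)^{2\lambda} R(I_r^{(n)})\right] + \varepsilon_n$ with $\varepsilon_n\to 0$. To extract $R(n)\to 0$ I would fix $\delta$ with $\sum_{r=1}^K\E[D_r^{2\lambda}]<\delta<1$ and small $\eta>0$, and split the expectation according to whether each $I_r^{(n)}\le n^{1-\eta}$: in the first regime the uniform bound $R(\cdot)\le C$ from the a priori moment estimate, combined with the concentration of $I^{(n)}/n$ furnished by Lemma \ref{asysubsize}, contributes $o(1)$, while in the second regime one replaces $R(I_r^{(n)})$ by $\sup_{m\ge n^{1-\eta}}R(m)$ and uses that the coefficient sum is asymptotically $\sum_r\E[D_r^{2\lambda}]<\delta$. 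A standard induction over $\sup_{m\le n} R(m)$ then closes the argument; convergence in $\zeta_2^\vee$ implies the weak convergence (\ref{limperib}). As emphasized, the principal hurdle is the uniform second-moment bound on $(X_n,Y_n)$ together with the control of the deterministic $o(1)$ terms absorbed into $b_\mathrm{b}(n),b_\mathrm{w}(n)$.
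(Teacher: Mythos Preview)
Your proposal is correct and follows essentially the same architecture as the paper's proof: accompanying sequence, triangle inequality, conditioning on $I^{(n)}$ plus $(2,+)$-ideality for the first piece, $\ell_2$-coupling via (\ref{zetaellest}) for the second, and a recursive inequality on $R(n)$.

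The one substantive difference is that the paper never establishes your a priori uniform bound $\sup_n(\E[X_n^2]+\E[Y_n^2])<\infty$, because it is not needed. For the $\varepsilon_n\to 0$ step, inequality (\ref{zetaellest}) requires only $\sup_n\|\hat X_n\|_2<\infty$ and $\|\Lambda_\mathrm{b}\|_2<\infty$, not a bound on $\|X_n\|_2$; the former is immediate from $|I_r^{(n)}/n|\le 1$ and the finite second moments of the $\Lambda$'s. For the closing argument, the paper observes that the recursive inequality itself already gives $R(n)\le(\vartheta+o(1))\max_{0\le k<n}R(k)+o(1)$ with $\vartheta:=\sum_r\E[D_r^{2\lambda}]<1$, which forces $(R(n))_{n\ge 0}$ to be bounded; a straightforward $\limsup$ argument (split the expectation at a fixed threshold $n_0$ rather than at $n^{1-\eta}$) then yields $\limsup R(n)\le\vartheta\limsup R(n)$ and hence $R(n)\to 0$. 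This sidesteps what you flag as the principal hurdle.
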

\begin{proof}
We first define, for $n\ge 1$, the accompanying sequences
\begin{align} \label{limperid}
Q^\mathrm{b}_n &:=  \sum_{r=1}^{a+1} \left(\frac{I^{(n)}_r}{n}\right)^\lambda \Lambda^{(r)}_\mathrm{b} + \sum_{r=a+2}^{K} \left(\frac{I^{(n)}_r}{n}\right)^\lambda \Lambda^{(r)}_\mathrm{w} + b_\mathrm{b}(n), \\
Q^\mathrm{w}_n &:=\sum_{r=1}^{c} \left(\frac{I^{(n)}_r}{n}\right)^\lambda
 \Lambda^{(r)}_\mathrm{b} +  \sum_{r=c+1}^{K}
\left(\frac{I^{(n)}_r}{n}\right)^\lambda \Lambda^{(r)}_\mathrm{w} + b_\mathrm{w}(n),
\end{align}
with $b_\mathrm{b}(n)$ and $b_\mathrm{w}(n)$ as in (\ref{defbbbw}) and  the $\Lambda^{(r)}_\mathrm{b}$,   $\Lambda^{(r)}_\mathrm{b}$ and  $I^{(n)}$ being independent, where the $\Lambda^{(r)}_\mathrm{b}$ are distributed as $\Lambda_\mathrm{b}$ and the
$\Lambda^{(r)}_\mathrm{w}$ are distributed as $\Lambda_\mathrm{w}$ for the respective values of $r$. Note that $Q^\mathrm{b}_n$ and $Q^\mathrm{w}_n$ are centered with finite second moment since ${\cal L}(\Lambda_\mathrm{b}),{\cal L}(\Lambda_\mathrm{b}) \in {\cal M}^\R_2(0)$. Hence, $\zeta_2$ distances between $X_n, Y_n, Q^\mathrm{b}_n, Q^\mathrm{w}_n, \Lambda_\mathrm{b}$ and $\Lambda_\mathrm{w}$ are finite. To bound
$$\Delta(n):=\zeta_2^\vee((X_n,Y_n), (\Lambda_\mathrm{b},\Lambda_\mathrm{w}))$$
we look at the distances
$$\Delta_\mathrm{b}(n):=\zeta_2(X_n, \Lambda_\mathrm{b}), \quad \Delta_\mathrm{w}(n):=\zeta_2(Y_n, \Lambda_\mathrm{w}).$$
We start with the estimate
\begin{align} \label{triest}
\zeta_2(X_n, \Lambda_\mathrm{b}) \le \zeta_2(X_n, Q_n^\mathrm{b}) + \zeta_2(Q_n^\mathrm{b}, \Lambda_\mathrm{b}).
\end{align}
We first show for the second summand in the latter display that $\zeta_2(Q_n^\mathrm{b}, \Lambda_\mathrm{b}) \to 0$ as $n\to\infty$: With  inequality (\ref{zetaellest}) we have
\begin{align*}
 \zeta_2(Q_n^\mathrm{b}, \Lambda_\mathrm{b})\le (\|Q_n^\mathrm{b}\|_2 + \|\Lambda_\mathrm{b}\|_2) \ell_2(Q_n^\mathrm{b}, \Lambda_\mathrm{b}).
\end{align*}
Moreover, $\|\Lambda_\mathrm{b}\|_2<\infty$ since ${\cal L}(\Lambda_\mathrm{b})\in {\cal M}^\R_2$ and, by definition of $Q_n^\mathrm{b}$ and with $|I^{(n)}_r/n|\le 1$ we have that $\|Q_n^\mathrm{b}\|_2$ is uniformly bounded in $n$. Hence, it is sufficient to show $\ell_2(Q_n^\mathrm{b}, \Lambda_\mathrm{b}) \to 0$. We have, using the independence properties in (\ref{limperid}) and (\ref{rde1limit1}), that
\begin{align*}
\lefteqn{\ell_2(Q_n^\mathrm{b}, \Lambda_\mathrm{b})}\\
& \le \sum_{r=1}^{a+1} \left\| \left(\frac{I^{(n)}_r}{n}\right)^\lambda - D_r^\lambda \right\|_2 \|\Lambda^{(r)}_\mathrm{b}\|_2 + \sum_{r=a+2}^{K}
 \left\| \left(\frac{I^{(n)}_r}{n}\right)^\lambda - D_r^\lambda \right\|_2 \|\Lambda^{(r)}_\mathrm{w}\|_2 + \|b_\mathrm{b}(n) - b_\mathrm{b}\|_2.
\end{align*}
Lemma \ref{asysubsize} implies that $\|(I^{(n)}_r/n)^\lambda - D_r^\lambda\|_2 \to 0$ as $n\to\infty$, which as well implies $\|b_\mathrm{b}(n) - b_\mathrm{b}\|_2 \to 0$. Hence,  we obtain $\ell_2(Q_n^\mathrm{b}, \Lambda_\mathrm{b}) \to 0$ and $\zeta_2(Q_n^\mathrm{b}, \Lambda_\mathrm{b}) \to 0$.

Next, we bound the first summand  $\zeta_2(X_n, Q_n^\mathrm{b})$ in (\ref{triest}). We condition on $I^{(n)}$. Note that conditionally on $I^{(n)}$ we have that $b_\mathrm{b}(n)$ is deterministic, which, for integration, we denote by $\beta=\beta(I^{(n)})$. Denoting the distribution of $I^{(n)}$ by $\Upsilon_n$ and $\mathbf{i}:=(i_1,\ldots,i_K)$ this yields
\begin{align}
\zeta_2(X_n, Q_n^\mathrm{b}) &\le \int \zeta_2\left( \sum_{r=1}^{a+1} \left(\frac{i_r}{n}\right)^\lambda X^{(r)}_{i_r} + \sum_{r=a+2}^{K} \left(\frac{i_r}{n}\right)^\lambda Y^{(r)}_{i_r} +\beta,    \right.  \nonumber \\
&\quad \quad\quad \quad \left. \sum_{r=1}^{a+1} \left(\frac{i_r}{n}\right)^\lambda \Lambda^{(r)}_\mathrm{b} + \sum_{r=a+2}^{K} \left(\frac{i_r}{n}\right)^\lambda \Lambda^{(r)}_\mathrm{w} + \beta\right) \,d\Upsilon_n(\mathbf{i})\nonumber\\
&\le  \int \left(\sum_{r=1}^{a+1} \left(\frac{i_r}{n}\right)^{2\lambda} \zeta_2(X^{(r)}_{i_r},\Lambda^{(r)}_\mathrm{b}) + \sum_{r=a+2}^{K} \left(\frac{i_r}{n}\right)^{2\lambda} \zeta_2(Y^{(r)}_{i_r},\Lambda^{(r)}_\mathrm{w})\right) d\Upsilon_n(\mathbf{i})  \label{81131}\\
&=\sum_{r=1}^{a+1} \E\left[\left(\frac{I^{(n)}_r}{n}\right)^{2\lambda}\Delta_\mathrm{b}(I^{(n)}_r)\right] + \sum_{r=a+2}^{K} \E\left[\left(\frac{I^{(n)}_r}{n}\right)^{2\lambda}\Delta_\mathrm{w}(I^{(n)}_r)\right]\nonumber\\
&\le \sum_{r=1}^{K}\E\left[\left(\frac{I^{(n)}_r}{n}\right)^{2\lambda}\Delta(I^{(n)}_r)\right],\nonumber
\end{align}
where, for (\ref{81131}) we use that $\zeta_2$ is $(2,+)$ ideal, as well as (\ref{sumabsch}).
Altogether, the estimate started in (\ref{triest}) yields
\begin{align*}
\Delta_\mathrm{b}(n) \le  \sum_{r=1}^{K}\E\left[\left(\frac{I^{(n)}_r}{n}\right)^{2\lambda}\Delta(I^{(n)}_r)\right]+ o(1).
\end{align*}
With the same argument we obtain the same upper bound for $\Delta_\mathrm{w}(n)$. Thus, using also that $I^{(n)}_1,\ldots,I^{(n)}_K$ are identically distributed we have
\begin{align}\label{rec_fia}
\Delta(n) \le K\E\left[\left(\frac{I^{(n)}_1}{n}\right)^{2\lambda}\Delta(I^{(n)}_1)\right] + o(1).
\end{align}
Now, a standard argument implies $\Delta(n)\to 0$ as follows: First from (\ref{rec_fia}) we obtain with $I^{(n)}_1/n\to D_1$ in $L_2$ and, by $\lambda>1/2$, with $\vartheta:= K \E[D_1^{2\lambda}]<1$  that
\begin{align*}
\Delta(n) &\le  K\E\left[\left(\frac{I^{(n)}_1}{n}\right)^{2\lambda}\right]\max_{0\le k\le n-1} \Delta(k) + o(1)\\
&\le  (\vartheta+o(1))\max_{0\le k\le n-1} \Delta(k)+o(1).
\end{align*}
Since $\vartheta<1$ this implies that the sequence $(\Delta(n))_{n\ge 0}$ is bounded. We denote $\eta:=\sup_{n\ge 0} \Delta(n)$ and $\xi:=\limsup_{n\to\infty} \Delta(n)$. For any $\varepsilon> 0$ there exists an $n_0 \ge 0$ such that $\Delta(n) \le \xi+\varepsilon$  for all
$n \ge n_0$. Hence, from (\ref{rec_fia}) we obtain
\begin{align*}
\Delta(n) \le K\E\left[{\bf 1}_{\left\{I^{(n)}_1<n_0\right\}} \left(\frac{I^{(n)}_1}{n}\right)^{2\lambda} \right]\eta+ K\E\left[{\bf 1}_{\left\{I^{(n)}_1\ge n_0\right\}} \left(\frac{I^{(n)}_1}{n}\right)^{2\lambda} \right](\xi+\varepsilon) + o(1).
\end{align*}
With $n\to\infty$ this implies
\begin{align*}
\xi \le  \vartheta(\xi+\varepsilon).
\end{align*}
Since $ \vartheta<1$ and $\varepsilon>0$ is arbitrary this implies $\xi=0$. Hence, we have
$\zeta_2^\vee\left( (X_n,Y_n), (\Lambda_\mathrm{b},\Lambda_\mathrm{w})\right)  \to 0$ as $n\to \infty$. Since convergence in $\zeta_2$ implies weak convergence, this implies (\ref{limperib}) as well.
\end{proof}

\noindent
{\bf The normal limit case.} Now, we discuss the normal limit case $a-c \le (a+b)/2$, where we first consider $a-c < (a+b)/2$. (The remaining case $a-c =(a+b)/2$ is similar with more involved expansions for the first two moments.) The formulae (\ref{bagpalfor1}), (\ref{bagpalfor2}) now imply
 \begin{align}\label{expexp13b}
\mu_\mathrm{b}(n)=
c_\mathrm{b} n
    + o(\sqrt{n}),\quad
\mu_\mathrm{w}(n)=c_\mathrm{w} n
    + o(\sqrt{n})
\end{align}
with $c_\mathrm{b}$ and $c_\mathrm{w}$ as in (\ref{consts}).
As it is usual in the use of the contraction method for proving normal limit laws based on the metric $\zeta_3$ we need also an expansion of the variance. We denote the variances of
 $B_n^\mathrm{b}$ and $B_n^\mathrm{w}$ by  $\sigma^2_\mathrm{b}(n)$ and $\sigma^2_\mathrm{w}(n)$. Additionally to   $bc=0$ we  exclude the case $a=c$. (In this case there is a trivial non-random evolution of the urn).  From \cite{BagPal85} we have as $n\to\infty$:
\begin{align}\label{async22}
\sigma^2_\mathrm{b}(n) = f_\mathrm{b} n + o(n), \quad \sigma^2_\mathrm{w}(n) = f_\mathrm{w} n + o(n),
\end{align}
with
\begin{align*}
f_\mathrm{b}=f_\mathrm{w}=
\frac{(a+b)bc(a-c)^2}{(a+b-2(a-c))(b+c)^2}>0.
\end{align*}
We use the normalizations $X_0:=Y_0:=X_1:=Y_1:=0$ and, cf.~\eqref{scalb},
\begin{align}\label{normcase11b}
X_n:=\frac{B_n^\mathrm{b}-\mu_\mathrm{b}(n)}{\sigma_\mathrm{b}(n)},\quad
Y_n:=\frac{B_n^\mathrm{w}-\mu_\mathrm{w}(n)}{\sigma_\mathrm{w}(n)}, \quad n\ge 2.
\end{align}
From the system (\ref{rde1})--(\ref{rde2}) we obtain for the scaled
quantities $X_n$, $Y_n$ the system, for $n\ge 1$,
\begin{align}\label{rde1modnew}
  X_n &\stackrel{d}{=} \sum_{r=1}^{a+1}
  \frac{\sigma_\mathrm{b}(I^{(n)}_r)}{\sigma_\mathrm{b}(n)}
  X^{(r)}_{I^{(n)}_r} + \sum_{r=a+2}^{K}
  \frac{\sigma_\mathrm{w}(I^{(n)}_r)}{\sigma_\mathrm{b}(n)}
  Y^{(r)}_{I^{(n)}_r} + e_\mathrm{b}(n),
  \\
  Y_n &\stackrel{d}{=} \sum_{r=1}^{c}
  \frac{\sigma_\mathrm{b}(I^{(n)}_r)}{\sigma_\mathrm{w}(n)}
  X^{(r)}_{I^{(n)}_r} + \sum_{r=c+1}^{K}
  \frac{\sigma_\mathrm{w}(I^{(n)}_r)}{\sigma_\mathrm{w}(n)}
  Y^{(r)}_{I^{(n)}_r} + e_\mathrm{w}(n),\label{rde1modnewb}
\end{align}
with conditions on independence and identical distributions analogously to (\ref{rde1}) and (\ref{rde2}) respectively (\ref{rde1mod}) and (\ref{rde1modb}). We have  $\|e_\mathrm{b}(n)\|_\infty, \|e_\mathrm{w}(n)\|_\infty\to 0$ since the leading linear terms in the expansions (\ref{expexp13b}) cancel out and the error terms $o(\sqrt{n})$ are asymptotically eliminated by the scaling of order $1/\sqrt{n}$.
 In view of Lemma \ref{asysubsize} this suggests for limits $X$ and $Y$ of $X_n$ and $Y_n$ respectively
\begin{align}\label{rde1limit1b}
X&\stackrel{d}{=} \sum_{r=1}^{a+1} \sqrt{D_r} X^{(r)} + \sum_{r=a+2}^{K} \sqrt{D_r} Y^{(r)}, \\
Y &\stackrel{d}{=} \sum_{r=1}^{c}\sqrt{D_r}
 X^{(r)} +  \sum_{r=c+1}^{K}
\sqrt{D_r} Y^{(r)},\label{rde1limit2b}
\end{align}
where $(D_1,\ldots,D_K)$, $X^{(1)},\ldots,X^{(K)}$, $Y^{(1)},\ldots,Y^{(K)}$ are independent, and the $X^{(r)}$ are distributed as $X$ and the $Y^{(r)}$ are distributed as $Y$.
 To the map associated to the system (\ref{rde1limit1b})--(\ref{rde1limit2b}) we can apply Theorem \ref{fp2}. The conditions (\ref{normcoe}) and (\ref{techcon}) are trivially satisfied. Hence $({\cal N}(0,1),{\cal N}(0,1))$ is the unique fixed-point of the associated map  in the space ${\cal M}^\R_3(0,1)\times {\cal M}^\R_3(0,1)$.
\begin{thm}\label{lim22nor}
Consider the P\'olya urn with replacement matrix (\ref{rep2x2}) with $a-c< (a+b)/2$ and $bc>0$ and the normalized numbers $X_n$ and $Y_n$  of black balls as in (\ref{normcase11b}).
Then, as $n\to \infty$,
\begin{align*}
\zeta_3^\vee\left( (X_n,Y_n), ({\cal N}(0,1),{\cal N}(0,1))\right)  \to 0.
\end{align*}
In particular,  as $n\to \infty$,
\begin{align*}
X_n \stackrel{d}{\longrightarrow} {\cal N}(0,1), \quad  Y_n \stackrel{d}{\longrightarrow} {\cal N}(0,1).
\end{align*}
\end{thm}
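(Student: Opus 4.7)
The plan is to adapt the proof of Theorem~\ref{22nnc} to the $\zeta_3^\vee$-metric, with the target now being the unique fixed-point $({\cal N}(0,1),{\cal N}(0,1))\in ({\cal M}^\R_3(0,1))^{\times 2}$ of the map associated to (\ref{rde1limit1b})--(\ref{rde1limit2b}) provided by Theorem~\ref{fp2}. First I would introduce accompanying sequences
\begin{align*}
Q_n^\mathrm{b} &:= \sum_{r=1}^{a+1} \frac{\sigma_\mathrm{b}(I^{(n)}_r)}{\sigma_\mathrm{b}(n)}\, N^{(r)}_\mathrm{b} + \sum_{r=a+2}^{K} \frac{\sigma_\mathrm{w}(I^{(n)}_r)}{\sigma_\mathrm{b}(n)}\, N^{(r)}_\mathrm{w} + e_\mathrm{b}(n),
\end{align*}
and $Q_n^\mathrm{w}$ analogously, where the $N^{(r)}_\mathrm{b}, N^{(r)}_\mathrm{w}\sim{\cal N}(0,1)$ are i.i.d.\ and independent of $I^{(n)}$. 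Writing $\Delta_\mathrm{b}(n):=\zeta_3(X_n,{\cal N}(0,1))$, $\Delta_\mathrm{w}(n):=\zeta_3(Y_n,{\cal N}(0,1))$ and $\Delta(n):=\zeta_3^\vee((X_n,Y_n),({\cal N}(0,1),{\cal N}(0,1)))$, the triangle inequality yields $\Delta_\mathrm{b}(n) \le \zeta_3(X_n, Q_n^\mathrm{b}) + \zeta_3(Q_n^\mathrm{b}, {\cal N}(0,1))$, and likewise for $Y_n$.

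For the first summand I would condition on $I^{(n)}$: since for $k\ge 2$ both $X^{(r)}_k$ and $N^{(r)}_\mathrm{b}$ belong to ${\cal M}^\R_3(0,1)$, the conditional $\zeta_3$ distances are finite, and the $(3,+)$-ideal property together with the subadditivity (\ref{sumabsch}) gives an estimate exactly parallel to (\ref{81131})--(\ref{rec_fia}), namely
\begin{align*}
\zeta_3(X_n, Q_n^\mathrm{b}) \le \sum_{r=1}^{a+1} \E\!\left[\Bigl(\tfrac{\sigma_\mathrm{b}(I^{(n)}_r)}{\sigma_\mathrm{b}(n)}\Bigr)^{3}\Delta_\mathrm{b}(I_r^{(n)})\right] + \sum_{r=a+2}^{K} \E\!\left[\Bigl(\tfrac{\sigma_\mathrm{w}(I^{(n)}_r)}{\sigma_\mathrm{b}(n)}\Bigr)^{3}\Delta_\mathrm{w}(I_r^{(n)})\right],
\end{align*}
with the finitely many cases $I_r^{(n)}\in\{0,1\}$ absorbed into an $o(1)$ error via the boundedness of the coefficients. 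For the second summand I would exploit that, conditionally on $I^{(n)}$, the random variable $Q_n^\mathrm{b}$ is Gaussian with mean $e_\mathrm{b}(n)$ and variance $V_n := \sum_{r=1}^{a+1}(\sigma_\mathrm{b}(I^{(n)}_r)/\sigma_\mathrm{b}(n))^2+\sum_{r=a+2}^{K}(\sigma_\mathrm{w}(I^{(n)}_r)/\sigma_\mathrm{b}(n))^2$. Lemma~\ref{asysubsize} combined with (\ref{async22}) and $f_\mathrm{b}=f_\mathrm{w}$ gives $V_n\to 1$ in every $L_p$ and $\|e_\mathrm{b}(n)\|_\infty\to 0$; coupling $Q_n^\mathrm{b}=e_\mathrm{b}(n)+\sqrt{V_n}\,N$ to a common $N\sim{\cal N}(0,1)$ and, if necessary, first re-standardizing $Q_n^\mathrm{b}$ to restore exact matching of the first two moments before invoking (\ref{zetaellest}), one concludes $\zeta_3(Q_n^\mathrm{b},{\cal N}(0,1))\to 0$, and analogously for $Q_n^\mathrm{w}$.

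Taking maxima over $j\in\{\mathrm{b},\mathrm{w}\}$, the coefficient asymptotics $(\sigma_\mathrm{b}(I^{(n)}_r)/\sigma_\mathrm{b}(n))^3,\,(\sigma_\mathrm{w}(I^{(n)}_r)/\sigma_\mathrm{b}(n))^3 \to D_r^{3/2}$ in $L_p$, together with the explicit computation $K\,\E[D_1^{3/2}] = 2K/(3K-1) < 1$ for every $K\ge 2$, deliver a recurrence
\begin{align*}
\Delta(n) \le \left(\frac{2K}{3K-1}+o(1)\right)\max_{0\le k<n}\Delta(k) + o(1).
\end{align*}
From this point the boundedness step followed by the $\limsup$-argument in the final paragraph of the proof of Theorem~\ref{22nnc} carry over verbatim and yield $\Delta(n)\to 0$, whence the weak convergence claim follows. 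The main technical obstacle I anticipate is the second summand above: $\zeta_3$ is finite only between laws with matching first two moments, so the small mismatches in $V_n$ and in $\E[Q_n^\mathrm{b}]$ must first be absorbed through a moment-matching correction of the accompanying sequence before (\ref{zetaellest}) applies; everything else is a direct transcription of the non-normal argument from Theorem~\ref{22nnc}, with $\zeta_2^\vee$ replaced by $\zeta_3^\vee$ and the contraction factor $K\E[D_1^{2\lambda}]$ replaced by $K\E[D_1^{3/2}]$.
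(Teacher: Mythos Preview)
Your approach is essentially the paper's. The one point of divergence is the ``technical obstacle'' you flag at the end: in fact it does not arise. The paper writes the accompanying sequence with an explicit indicator ${\bf 1}_{\{I^{(n)}_r\ge 2\}}$ in each summand, but since $B_0^\mathrm{b},B_1^\mathrm{b},B_0^\mathrm{w},B_1^\mathrm{w}$ are all deterministic one has $\sigma_\mathrm{b}(0)=\sigma_\mathrm{b}(1)=\sigma_\mathrm{w}(0)=\sigma_\mathrm{w}(1)=0$, so your $Q_n^\mathrm{b}$ and the paper's coincide. The crucial observation is then that the accompanying sequence has \emph{exactly} the same first two moments as $X_n$: both $X_n$ (via the right-hand side of (\ref{rde1modnew})) and $Q_n^\mathrm{b}$ are built from the same random coefficients $\sigma_\cdot(I^{(n)}_r)/\sigma_\mathrm{b}(n)$ and the same additive term $e_\mathrm{b}(n)$, and wherever a coefficient is nonzero the attached variable ($X^{(r)}_k$, $Y^{(r)}_k$, or $N_r$) has mean $0$ and variance $1$. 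Hence $\E[Q_n^\mathrm{b}]=0$ and $\Var(Q_n^\mathrm{b})=1$ for all $n\ge 2$, no re-standardization is needed, and all $\zeta_3$ distances in your triangle inequality are finite from the outset. With this in hand, the rest of your sketch --- the $(3,+)$-ideal estimate, the asymptotic contraction factor $\sum_{r=1}^K\E[D_r^{3/2}]<1$, and the boundedness/$\limsup$ argument --- matches the paper line by line.
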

\begin{proof}
The proof of this Theorem can be done along the lines of the proof of Theorem \ref{22nnc}. However, more care has to be taken in the definition of the quantities corresponding to
$Q^\mathrm{b}_n$ and $Q^\mathrm{w}_n$ in (\ref{limperid}) in order to assure finiteness of the $\zeta_3$ distances. A possible choice is, for $n\ge 2$,
\begin{align} \label{limperi2}
  \tilde{Q}^\mathrm{b}_n &:=
  \sum_{r=1}^{a+1}{\bf 1}_{\left\{I^{(n)}_r\ge 2\right\}}
  \frac{\sigma_\mathrm{b}(I^{(n)}_r)}{\sigma_\mathrm{b}(n)} N_r
  + \sum_{r=a+2}^{K}{\bf 1}_{\left\{I^{(n)}_r\ge 2\right\}}
  \frac{\sigma_\mathrm{w}(I^{(n)}_r)}{\sigma_\mathrm{b}(n)} N_r
  + e_\mathrm{b}(n),
  \\
  \tilde{Q}^\mathrm{w}_n&\stackrel{d}{=}
  \sum_{r=1}^{c}{\bf 1}_{\left\{I^{(n)}_r\ge 2\right\}}
  \frac{\sigma_\mathrm{b}(I^{(n)}_r)}{\sigma_\mathrm{w}(n)} N_r
  + \sum_{r=c+1}^{K} {\bf 1}_{\left\{I^{(n)}_r\ge 2\right\}}
  \frac{\sigma_\mathrm{w}(I^{(n)}_r)}{\sigma_\mathrm{w}(n)} N_r
  +  e_\mathrm{w}(n),
\end{align}
with $e_\mathrm{b}(n)$ and $e_\mathrm{w}(n)$ as in (\ref{rde1modnew})--(\ref{rde1modnewb}) and  $N_1,\ldots,N_K$, $I^{(n)},$  independent, where the $N_r$ are standard normally
distributed for $r=1,\ldots,K$. A comparison on the definition of $\tilde{Q}^\mathrm{b}_n$ and
$\tilde{Q}^\mathrm{w}_n$ with the right hand sides of (\ref{rde1modnew}) and  (\ref{rde1modnewb}) and the scaling (\ref{normcase11b}) yields that we have
$\E[\tilde{Q}^\mathrm{b}_n]=\E[\tilde{Q}^\mathrm{w}_n]=0$ and $\Var(\tilde{Q}^\mathrm{b}_n)=\Var(\tilde{Q}^\mathrm{w}_n)=1$ for all $n\ge 2$. Obviously, we also have $\|\tilde{Q}^\mathrm{b}_n\|_3, \|\tilde{Q}^\mathrm{w}_n\|_3<\infty$.
Hence, $\zeta_3$ distances between $X_n$, $Y_n$, $\tilde{Q}^\mathrm{b}_n$, $\tilde{Q}^\mathrm{w}_n$, and ${\cal N}(0,1)$ are finite for all $n\ge 2$. With
\begin{align*}
\tilde{\Delta}(n)&:=\zeta_3^\vee((X_n,Y_n), ({\cal N}(0,1),{\cal N}(0,1))), \\
\tilde{\Delta}_\mathrm{b}(n)&:=\zeta_3(X_n, {\cal N}(0,1)), \\
\tilde{\Delta}_\mathrm{w}(n)&:=\zeta_3(Y_n, {\cal N}(0,1))
\end{align*}
we also start with
\begin{align*}
\zeta_3(X_n, {\cal N}(0,1)) \le \zeta_3(X_n, \tilde{Q}_n^\mathrm{b}) + \zeta_3(\tilde{Q}_n^\mathrm{b}, {\cal N}(0,1)).
\end{align*}
Analogously to the proof of Theorem \ref{22nnc} we obtain  $\zeta_3(\tilde{Q}_n^\mathrm{b}, {\cal N}(0,1))\to 0$ as $n\to\infty$.

The bound for  $\zeta_3(X_n, \tilde{Q}_n^\mathrm{b})$ is also analogous to the proof of Theorem \ref{22nnc}, where now is used that $\zeta_3$ is $(3,+)$ ideal instead of $(2,+)$ ideal. This yields
\begin{align*}
  \zeta_3(X_n, \tilde{Q}_n^\mathrm{b})
  &\le \sum_{r=1}^{a+1}\E\left[
    \left(\frac{\sigma_\mathrm{b}(I^{(n)}_r)}{\sigma_\mathrm{b}(n)}\right)^{\!\!3}
    \tilde{\Delta}(I^{(n)}_r)\right]
  + \sum_{r=a+2}^{K}\E\left[
    \left(\frac{\sigma_\mathrm{w}(I^{(n)}_r)}{\sigma_\mathrm{b}(n)}\right)^{\!\!3}
    \tilde{\Delta}(I^{(n)}_r)\right].
\end{align*}
Then we argue as in the previous proof to obtain analogous to (\ref{rec_fia})
\begin{align*}
  \tilde{\Delta}(n)\le
 \sum_{r=1}^{a+1}\E\left[
   \left(\frac{\sigma_\mathrm{b}(I^{(n)}_r)}{\sigma_\mathrm{b}(n)}\right)^{\!\!3}
   \tilde{\Delta}\bigl(I^{(n)}_r\bigr)\right]
 + \sum_{r=a+2}^{K}\E\left[
   \left(\frac{\sigma_\mathrm{w}(I^{(n)}_r)}{\sigma_\mathrm{b}(n)}\right)^{\!\!3}
   \tilde{\Delta}\bigl(I^{(n)}_r\bigr)\right]
 +o(1).
\end{align*}
From this estimate we can deduce $\tilde{\Delta}(n) \to 0$ as for $\Delta(n)$ in the proof of Theorem \ref{22nnc}, where we need to use that from the expansions (\ref{async22}) and Lemma \ref{asysubsize} we obtain, as $n\to\infty$, that
\begin{align}\label{8113y}
  \sum_{r=1}^{a+1}\E\left[\left(
      \frac{\sigma_\mathrm{b}\bigl(I^{(n)}_r\bigr)}{\sigma_\mathrm{b}(n)}
    \right)^{\!\!3}\right]
  + \sum_{r=a+2}^{K}\E\left[\left(
      \frac{\sigma_\mathrm{w}\bigl(I^{(n)}_r\bigr)}{\sigma_\mathrm{b}(n)}
    \right)^{3}\right]
  \to \sum_{r=1}^{K} \E\left[D_r^{3/2}\right] <1.
\end{align}
\end{proof}
\noindent
{\bf Remarks.} {\bf (1)} Note that the proof of Theorem \ref{lim22nor} cannot be done in the $\zeta_2^\vee$ metric since the term corresponding to (\ref{8113y}) then is
 \begin{align*}
\sum_{r=1}^{a+1}\E\left[\left(\frac{\sigma_\mathrm{b}(I^{(n)}_r)}{\sigma_\mathrm{b}(n)}\right)^{2}\right] + \sum_{r=a+2}^{K}\E\left[\left(\frac{\sigma_\mathrm{w}(I^{(n)}_r)}{\sigma_\mathrm{b}(n)}\right)^{2}\right] \to \sum_{r=1}^{K} \E\left[D_r\right] =1,
\end{align*}
where a limit $<1$ is required to obtain $\tilde{\Delta}(n) \to 0$. This is the reason, why we use $\zeta_3^\vee$. It is possible to use $\zeta_s^\vee$ for any $2<s\le 3$ leading to the limit $\sum_{r=1}^K \E[D_r^s] <1$.\\
{\bf (2)} The case $a-c =(a+b)/2$ differs in the error terms in (\ref{expexp13b}) which then become $\bo(\sqrt{n})$. Since the variances in (\ref{async22}) get additional logarithmic factors we still obtain the system (\ref{rde1limit1b})--(\ref{rde1limit2b}) and our proof technique can be applied as well.\\
{\bf (3)} The condition $bc>0$ cannot be dropped. In the case $bc=0$ the urn  model is not irreducible in the terminology of Janson \cite{Ja04} and is known to behave quite differently. A  comprehensive study of the case $bc=0$ is given in Janson \cite{Ja06T}, see also Janson \cite{Ja10}.  In our approach $bc=0$
would  lead to degenerate systems of limit equations that do not
identify  limit laws.\\
{\bf (4)} The condition $f_{b}=f_{w}$ is necessary for our  proof to work.

\subsection{An urn with random replacements}\label{kersting}
As an example for  random entries in the replacement matrix $R$ we consider a simple model with two colors, black and white. In each step when drawing a black ball, a coin is independently tossed to decide whether the black ball is placed back together with another black ball or together with another white ball. The probability for success (a second black ball) is denoted by $0<\alpha<1$.
Similarly, if a white ball is drawn a coin with probability $0<\beta<1$ is tossed to decide whether a second white ball or a black ball is placed back together with the white ball. We denote the replacement matrix by
\begin{align}\label{reprand2x2}
  R=\left[ \begin{array}{cc} F_{\alpha} & 1-F_{\alpha} \\ 1-F_{\beta} & F_{\beta}\end{array} \right]
\end{align}
where $F_\alpha$ and $F_\beta$ denote Bernoulli random variables being $1$ with probabilities $\alpha$ and $\beta$ respectively, otherwise $0$. This urn model has been introduced in the context of clinical trials and been studied together with generalizations in \cite{wedu78,wesmkiya90,smro95,sm96,maro97,bahu99,bahuzh02,Ja04}.

The row sums of $R$ in (\ref{reprand2x2}) are both almost surely equal to one, hence the urn is balanced.
Again, the number of black balls after $n$ draws starting with an initial composition with one black ball is denoted by $B^\mathrm{b}_n$, when starting with a white ball by $B^\mathrm{w}_n$.
According to our approach in section \ref{appr} we obtain the recursive equation
\begin{align}\label{rder1}
  B_n^{\mathrm{b}} \stackrel{d}{=} B^{\mathrm{b}, (1)}_{I_n} + F_{\alpha}B^{\mathrm{b}, (2)}_{J_n} + \left(1-F_{\alpha}\right) B^{\mathrm{w}}_{J_n}, \quad n\ge 1,
\end{align}
where $(B^{\mathrm{b}, (1)}_k)_{0\le k < n}, (B^{\mathrm{b},
  (2)}_k)_{0\le k < n}$,  $(B^{\mathrm{w}}_k)_{0\le k < n}$, $F_\alpha$ and
$I_n$ are independent, and $B^{\mathrm{b}, (1)}_k$ and
$B^{\mathrm{b}, (2)}_k$ are distributed as $B^{\mathrm{b}}_k$ for
$k=0,\ldots,n-1$, and $I_n$ is uniformly distributed on $\{0,\dots,n-1\}$ while $J_n:=n-1-I_n$. (The uniform distribution of $I_n$ follows from the uniform distribution of the number of balls in the $\left[\begin{smallmatrix}1 & 0 \\ 0 &1\end{smallmatrix} \right]$-P\'olya urn.)
Similarly, we obtain for $B_n^\mathrm{w}$ that
\begin{align}\label{rder2}
B_n^\mathrm{w} \stackrel{d}{=}  B^{\mathrm{w}, (1)}_{I_n} +F_{\beta}  B^{\mathrm{w}, (2)}_{J_n} + \left(1-F_{\beta}\right) B^{\mathrm{b}}_{J_n} , \quad n\ge 1,
\end{align}
with conditions on independence and identical distributions similar to (\ref{rder1}). Together with the initial value $(B_0^\mathrm{b},B_0^\mathrm{w})=(1,0)$ the system of equations (\ref{rder1})--(\ref{rder2}) again defines the sequence of pairs of distributions $({\cal L}(B_n^\mathrm{b}),{\cal L}(B_n^\mathrm{w}))_{n\ge 0}$.
As a special case of Lemma \ref{asysubsize} we have
\begin{align}\label{coefcon}
(I_n,J_n)\to (U,1-U) \quad, (n\to\infty),
\end{align}
almost surely  where $U$ is uniformly distributed on $[0,1]$.
Furthermore, we denote for $n\ge 0$
\begin{align}\label{meanclinic}
\mu_\mathrm{b}(n):=\E [ B^\mathrm{b}_n  ], \quad
\mu_{\mathrm{w}}(n):=
\E [ B^\mathrm{w}_n ].
\end{align}
These means have been studied before. We have the following exact formulae:
\begin{lem}\label{ewker}
For $\mu_\mathrm{b}(n)$ and $\mu_\mathrm{w}(n)$ as in (\ref{meanclinic}) with $0<\alpha,\beta<1$ we have
\begin{align}
  \label{erwrandb}
  \mu_{\mathrm{b}}\left(n\right)
  &=\frac{1-\beta}{2-\alpha-\beta}\, n
  +\frac{1-\alpha}{2-\alpha-\beta}
      \frac{\Gamma(n+\alpha+\beta)}{\Gamma(\alpha+\beta)\Gamma(n+1)}
  +\frac{1-\beta}{2-\alpha-\beta},\\
  \label{erwrandw}
\mu_{\mathrm{w}}\left(n\right)
&=\frac{1-\beta}{2-\alpha-\beta}\, n
  -\frac{1-\beta}{2-\alpha-\beta}
      \frac{\Gamma(n+\alpha+\beta)}{\Gamma(\alpha+\beta)\Gamma(n+1)}
  +\frac{1-\beta}{2-\alpha-\beta}.
\end{align}
\end{lem}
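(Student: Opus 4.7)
The plan is to establish both identities by deriving a common first-order linear recurrence for the means from the forward dynamics of the urn, and then verifying the proposed closed forms by induction. The backward equations (\ref{rder1})--(\ref{rder2}) could also be used, but they yield integral (summation) recurrences that are harder to solve; the forward viewpoint is cleaner and is available because the total number of balls grows deterministically by one at each step.

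Step 1 (recurrence). After $n$ draws, the urn contains exactly $n+1$ balls. Conditionally on $B_n^\mathrm{b}$, the $(n+1)$-st draw picks a black ball with probability $B_n^\mathrm{b}/(n+1)$, in which case the expected number of additional black balls is $\alpha$, and picks a white ball with probability $(n+1-B_n^\mathrm{b})/(n+1)$, in which case the expected number of additional black balls is $1-\beta$. Taking expectations yields
\begin{align*}
\mu_\mathrm{b}(n+1) = \frac{n+\alpha+\beta}{n+1}\,\mu_\mathrm{b}(n) + (1-\beta), \quad n\ge 0,
\end{align*}
and the identical recurrence holds for $\mu_\mathrm{w}$, the only difference being the initial value $\mu_\mathrm{w}(0)=0$ versus $\mu_\mathrm{b}(0)=1$.

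Step 2 (verification). Set $C=(1-\beta)/(2-\alpha-\beta)$, $D=(1-\alpha)/(2-\alpha-\beta)$, and $f(n)=\Gamma(n+\alpha+\beta)/(\Gamma(\alpha+\beta)\Gamma(n+1))$, so that the proposed formulae read $\mu_\mathrm{b}(n)=Cn+Df(n)+C$ and $\mu_\mathrm{w}(n)=Cn-Cf(n)+C$. At $n=0$ one has $f(0)=1$, giving $\mu_\mathrm{b}(0)=D+C=1$ and $\mu_\mathrm{w}(0)=-C+C=0$, which matches. For the induction step use the ratio $f(n+1)/f(n)=(n+\alpha+\beta)/(n+1)$. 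Plugging $\mu_\mathrm{b}(n)=C(n+1)+Df(n)$ into the recurrence and multiplying by $(n+\alpha+\beta)/(n+1)$ produces the term $Df(n+1)$ plus $C(n+\alpha+\beta)$; adding $(1-\beta)$ and using the identity $C(\alpha+\beta)+(1-\beta)=2C$, which holds by the definition of $C$, reduces this to $C(n+1)+C+Df(n+1)=\mu_\mathrm{b}(n+1)$. The argument for $\mu_\mathrm{w}$ is identical with $D$ replaced by $-C$, and the same identity $C(\alpha+\beta)+(1-\beta)=2C$ closes the induction.

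There is no real obstacle here; the only point demanding a little care is the algebraic identity $C(\alpha+\beta)+(1-\beta)=2C$, which is exactly what forces the choice of the constants $C$ and $D$ in the ansatz. Alternatively, one may arrive at the same formulae constructively by solving the recurrence: the homogeneous solution of $u(n+1)=\frac{n+\alpha+\beta}{n+1}u(n)$ is $f(n)$ (up to a constant), a linear ansatz $u(n)=Cn+C$ gives a particular solution, and the initial conditions then pin down the coefficient of $f(n)$ to be $D$ for $\mu_\mathrm{b}$ and $-C$ for $\mu_\mathrm{w}$.
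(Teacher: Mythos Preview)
Your proof is correct. The forward one-step recurrence you derive,
\[
\mu_\mathrm{b}(n+1)=\frac{n+\alpha+\beta}{n+1}\,\mu_\mathrm{b}(n)+(1-\beta),
\]
is valid, the Gamma ratio identity $f(n+1)/f(n)=(n+\alpha+\beta)/(n+1)$ is right, and the algebraic check $C(\alpha+\beta)+(1-\beta)=2C$ indeed closes the induction for both initial conditions.

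The paper does not give a detailed proof; it only remarks that one proceeds ``by matrix diagonalization \ldots\ along the lines of the proof of Lemma~\ref{lemewcyc}.'' That route starts from the backward system (\ref{rder1})--(\ref{rder2}), takes expectations, telescopes the resulting Ces\`aro sums (as in the proof of Lemma~\ref{lemewcyc}) to obtain a vector recurrence of the form $\E[B_n]=(\mathrm{Id}_2+\tfrac{1}{n}M)\E[B_{n-1}]$ for a $2\times 2$ matrix $M$ with eigenvalues $1$ and $\alpha+\beta-1$, and then reads off the closed form from the eigen-decomposition. Your forward approach bypasses this: because the expected increment depends on the current composition only through $B_n^\mathrm{b}$, the recurrence for $\mu_\mathrm{b}$ is already scalar and no diagonalization is needed. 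The two arguments are equivalent in content---your scalar recurrence is precisely what the matrix recurrence becomes after projecting onto the second eigenvector---but your organization is more direct for this particular $2\times 2$ example, while the paper's scheme generalizes uniformly to the $m$-color setting of Lemma~\ref{lemewcyc}.
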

\begin{proof}
A proof is based on  matrix diagonalization and can  elementary be  done along the lines of the proof of Lemma \ref{lemewcyc} below.
\end{proof}
As in the example from section \ref{ex1sec} we have two different types of limit laws, with normal limit for $\alpha+\beta\le 3/2$ and with non-normal limit for $\alpha+\beta>3/2$.\\

\noindent
{\bf The non-normal limit case.} We  assume that $\lambda:=\alpha+\beta-1>1/2$.
From Lemma \ref{ewker}  we obtain the asymptotic expressions, as $n\to\infty$,
\begin{align*}
\mu_\mathrm{b}(n)&=
c'_\mathrm{b} n
    +d'_\mathrm{b} n^\lambda + o(n^\lambda),\\
\mu_\mathrm{w}(n)&=c'_\mathrm{w} n
   +d'_\mathrm{w} n^\lambda + o(n^\lambda)
\end{align*}
with  constants
\begin{align}\label{randcon}
 c'_\mathrm{b}=c'_\mathrm{w}=\frac{1-\beta}{1-\lambda},\quad
d'_\mathrm{b}=  \frac{1-\alpha}{\left(1-\lambda\right)\Gamma(\lambda+1)},\quad
d'_\mathrm{w}=-\frac{1-\beta}{\left(1-\lambda\right)\Gamma(\lambda+1)}.
\end{align}
We use the normalizations $X_0:=Y_0:=0$ and, cf.~\eqref{scala},
\begin{align}
 \label{eq:2}
 X_{n}:=\frac{B_{n}^\mathrm{b}-\mu_\mathrm{b}(n)}{n^\lambda},
 \quad
 Y_{n}:=\frac{B_{n}^\mathrm{w}-\mu_\mathrm{w}(n)}{n^\lambda},
 \quad n\ge 1.
\end{align}
As in the non-normal case of the example in section \ref{ex1sec} it is sufficient to use the order of the error term of the mean for the scaling. From  (\ref{rder1})--(\ref{rder2}) we obtain for $n\ge 1$,
\begin{align}\label{rderscal}
  X_n &\stackrel{d}{=} \left(\frac{I_n}{n}\right)^\lambda X^{(1)}_{I_n}
+ F_{\alpha}\left(\frac{J_n}{n}\right)^\lambda X^{(2)}_{J_n}
+ \left(1-F_{\alpha}\right) \left(\frac{J_n}{n}\right)^\lambda Y_{J_n} + b'_\mathrm{b}(n), \\
  Y_n &\stackrel{d}{=}   \left(\frac{I_n}{n}\right)^\lambda Y^{(1)}_{I_n} +F_{\beta}\left(\frac{J_n}{n}\right)^\lambda  Y^{(2)}_{J_n} + \left(1-F_{\beta}\right)\left(\frac{J_n}{n}\right)^\lambda X_{J_n}+ b'_\mathrm{w}(n),
\end{align}
with
\begin{align*}
b'_\mathrm{b}(n)  &= d'_\mathrm{b} \left(\left(\frac{I_n}{n}\right)^\lambda + F_\alpha \left(\frac{J_n}{n}\right)^\lambda -1 \right) +d'_\mathrm{w}(1-F_\alpha)  \left(\frac{J_n}{n}\right)^\lambda+o(1),\\
b'_\mathrm{w}(n)  &= d'_\mathrm{w} \left(\left(\frac{I_n}{n}\right)^\lambda + F_\beta \left(\frac{J_n}{n}\right)^\lambda -1 \right) +d'_\mathrm{b}(1-F_\beta)  \left(\frac{J_n}{n}\right)^\lambda+o(1),
\end{align*}
with conditions on independence and identical distributions analogously to (\ref{rder1})--(\ref{rder2}). In view of (\ref{coefcon}) this suggests for limits $X$ and $Y$ of $X_n$ and $Y_n$ that
\begin{align}\label{limeq12}
  X &\stackrel{d}{=} U^\lambda X^{(1)}
+ F_\alpha (1-U)^\lambda X^{(2)}
+ \left(1-F_\alpha\right) (1-U)^\lambda Y^{(1)} + b'_\mathrm{b}, \\
  Y &\stackrel{d}{=}  U^\lambda Y^{(1)} +F_\beta(1-U)^\lambda  Y^{(2)} + \left(1-F_\beta \right)(1-U)^\lambda X^{(1)}+ b'_\mathrm{w}, \label{limeq12b}
\end{align}
with
\begin{align*}
b'_\mathrm{b}  &= d'_\mathrm{b} \left(U^\lambda + F_\alpha (1-U)^\lambda -1 \right) +d'_\mathrm{w}(1-F_\alpha)  (1-U)^\lambda,\\
b'_\mathrm{w}  &= d'_\mathrm{w} \left(U^\lambda + F_\beta (1-U)^\lambda -1 \right) +d'_\mathrm{b}(1-F_\beta)  (1-U)^\lambda,
\end{align*}
where $X^{(1)}$, $X^{(2)}$, $Y^{(1)}$, $Y^{(2)}$ and $U$ are independent and $X^{(1)}$, $X^{(2)}$ are distributed as $X$ and $Y^{(1)}$, $Y^{(2)}$ are distributed as $Y$.

To check that Theorem \ref{fp1} can be applied to the map associated to the system
(\ref{limeq12})--(\ref{limeq12b}) first note  that the  form of $d'_\mathrm{b}$ and $d'_\mathrm{w}$ in (\ref{randcon}) implies $\E[b'_\mathrm{b}]=\E[b'_\mathrm{w}]=0$. To check condition (\ref{contcond1}) note that we have
\begin{align*}
 \E\left[U^{2\lambda}\right]
+ \E\left[F_\alpha (1-U)^{2\lambda}\right]
+ \E\left[(1-F_\alpha) (1-U)^{2\lambda}\right] =\frac{2}{2\lambda+1}<1,
\end{align*}
since $\lambda>1/2$.
Analogously, we have $\E[U^{2\lambda}]
+ \E[F_\beta (1-U)^{2\lambda}]
+ \E[(1-F_\beta) (1-U)^{2\lambda}] =2/(2\lambda+1)<1$. Together, this verifies condition (\ref{contcond1}). Hence Theorem \ref{fp1} can be applied and yields a unique fixed-point $({\cal L}(\Lambda'_\mathrm{b}), {\cal L}(\Lambda'_\mathrm{w}))$ in ${\cal M}^\R_2(0)\times {\cal M}^\R_2(0)$ to (\ref{limeq12})--(\ref{limeq12b}).
\begin{thm}\label{22ran}
Consider the P\'olya urn with random replacement matrix (\ref{reprand2x2}) with $\alpha,\beta\in(0,1)$ and $\alpha+\beta > 3/2$ and the normalized numbers $X_n$ and $Y_n$  of black balls as in (\ref{eq:2}). Furthermore let  $({\cal L}(\Lambda'_\mathrm{b}),{\cal L}(\Lambda'_\mathrm{w}))$ denote the in ${\cal M}^\R_2(0)\times {\cal M}^\R_2(0)$ unique solution of (\ref{limeq12})--(\ref{limeq12b}).
Then, as $n\to \infty$,
\begin{align*}
X_n \stackrel{d}{\longrightarrow} \Lambda'_\mathrm{b}, \quad  Y_n \stackrel{d}{\longrightarrow} \Lambda'_\mathrm{w}.
\end{align*}
\end{thm}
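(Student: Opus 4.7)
The plan is to mirror the proof of Theorem \ref{22nnc} essentially line by line, working on the product space $({\cal M}^\R_2(0))^{\times 2}$ equipped with $\zeta_2^\vee$ and using the scaled system (\ref{rderscal}) as the recursive driver. First I would define accompanying sequences
\begin{align*}
  \hat{Q}_n^\mathrm{b} &:= \left(\tfrac{I_n}{n}\right)^{\lambda}\Lambda_\mathrm{b}^{\prime,(1)}
  + F_\alpha\left(\tfrac{J_n}{n}\right)^{\lambda}\Lambda_\mathrm{b}^{\prime,(2)}
  + (1-F_\alpha)\left(\tfrac{J_n}{n}\right)^{\lambda}\Lambda_\mathrm{w}^{\prime,(1)}
  + b'_\mathrm{b}(n), \\
  \hat{Q}_n^\mathrm{w} &:= \left(\tfrac{I_n}{n}\right)^{\lambda}\Lambda_\mathrm{w}^{\prime,(2)}
  + F_\beta\left(\tfrac{J_n}{n}\right)^{\lambda}\Lambda_\mathrm{w}^{\prime,(3)}
  + (1-F_\beta)\left(\tfrac{J_n}{n}\right)^{\lambda}\Lambda_\mathrm{b}^{\prime,(3)}
  + b'_\mathrm{w}(n),
\end{align*}
where all primed superscripts stand for independent copies of the respective limit variables, independent of $(I_n,J_n,F_\alpha,F_\beta)$. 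Since $\Lambda'_\mathrm{b},\Lambda'_\mathrm{w}\in{\cal M}^\R_2(0)$ and the coefficients lie in $[0,1]$, these are centered random variables with uniformly bounded second moments, so all $\zeta_2$-distances below are finite.

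Writing $\Delta_\mathrm{b}(n):=\zeta_2(X_n,\Lambda'_\mathrm{b})$, $\Delta_\mathrm{w}(n):=\zeta_2(Y_n,\Lambda'_\mathrm{w})$ and $\Delta(n):=\max\{\Delta_\mathrm{b}(n),\Delta_\mathrm{w}(n)\}$, I would split
$\Delta_\mathrm{b}(n)\le\zeta_2(X_n,\hat{Q}_n^\mathrm{b})+\zeta_2(\hat{Q}_n^\mathrm{b},\Lambda'_\mathrm{b})$ and analogously for the white index. The second summand tends to $0$ by the inequality $\zeta_2\le(\|\cdot\|_2+\|\cdot\|_2)\,\ell_2$ from (\ref{zetaellest}) combined with the $L_2$-convergence $(I_n/n,J_n/n)\to(U,1-U)$ supplied by Lemma \ref{asysubsize} (in its special case (\ref{coefcon})) and the fact that $(\hat{Q}_n^\mathrm{b})$ is a continuous $L_2$-functional of its coefficients, which also forces $\|b'_\mathrm{b}(n)-b'_\mathrm{b}\|_2\to 0$.

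For the first summand I would condition on $(F_\alpha,I_n,J_n)$, under which $b'_\mathrm{b}(n)$ is deterministic, and apply the $(2,+)$-ideality of $\zeta_2$ together with (\ref{sumabsch}); because $F_\alpha\in\{0,1\}$ implies $F_\alpha^2=F_\alpha$ and $(1-F_\alpha)^2=1-F_\alpha$, and because for any $\omega$ exactly one of $F_\alpha(\omega),\,1-F_\alpha(\omega)$ equals $1$, the bound collapses to
\begin{align*}
  \zeta_2(X_n,\hat{Q}_n^\mathrm{b})\le
  \E\!\left[\left(\tfrac{I_n}{n}\right)^{\!2\lambda}\!\Delta_\mathrm{b}(I_n)\right]
  +\E\!\left[\left(\tfrac{J_n}{n}\right)^{\!2\lambda}\!\bigl(F_\alpha\Delta_\mathrm{b}(J_n)+(1-F_\alpha)\Delta_\mathrm{w}(J_n)\bigr)\right]
  \le \E\!\left[\left(\tfrac{I_n}{n}\right)^{\!2\lambda}\!\Delta(I_n)\right]+\E\!\left[\left(\tfrac{J_n}{n}\right)^{\!2\lambda}\!\Delta(J_n)\right],
\end{align*}
and the same bound (using the $\beta$-coin) applies to $\zeta_2(Y_n,\hat{Q}_n^\mathrm{w})$. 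Hence
$\Delta(n)\le \E[(I_n/n)^{2\lambda}\Delta(I_n)]+\E[(J_n/n)^{2\lambda}\Delta(J_n)]+o(1)$, with asymptotic contraction constant
$\vartheta:=\E[U^{2\lambda}]+\E[(1-U)^{2\lambda}]=2/(2\lambda+1)<1$ by the assumption $\lambda>1/2$.

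The final step is the standard $\limsup$-argument already executed for $\Delta(n)$ in the proof of Theorem \ref{22nnc}: the displayed recursion yields $\Delta(n)\le(\vartheta+o(1))\max_{0\le k<n}\Delta(k)+o(1)$, which bounds $(\Delta(n))$, and then splitting the expectations at a large threshold $n_0$ where $\Delta(k)\le\xi+\varepsilon$ forces $\limsup\Delta(n)\le\vartheta(\xi+\varepsilon)$ and hence $\xi=0$. Convergence in $\zeta_2^\vee$ gives the claimed weak convergence. The only real delicacy I expect is the bookkeeping in the conditioning step, where one must carry both coordinates of the coin $F_\alpha$ (respectively $F_\beta$) through the ideality estimate so as to exploit $F_\alpha+(1-F_\alpha)=1$ and stay with the same contraction factor $2/(2\lambda+1)$; beyond that the argument is a direct transcription of the deterministic $2\times 2$ non-normal case.
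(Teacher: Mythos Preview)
Your proposal is correct and is exactly the approach the paper takes: the paper's entire proof reads ``Analogously to the proof of Theorem \ref{22nnc},'' and what you have written is precisely that analogue, with the only new bookkeeping being the conditioning on the Bernoulli coin $F_\alpha$ (resp.\ $F_\beta$) and the use of $F_\alpha^2=F_\alpha$ to recover the same contraction factor $2/(2\lambda+1)$.
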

\begin{proof} Analogously to the proof of Theorem \ref{22nnc}. \end{proof}

\noindent
{\bf The normal limit case.} Now, we discuss the normal limit case  $\lambda:=\alpha+\beta-1\le 1/2$. We first assume  $\lambda:=\alpha+\beta-1< 1/2$.  The  expansions from Lemma \ref{ewker} now imply, as $n\to\infty$
\begin{align}\label{expmeanrannorm}
\mu_\mathrm{b}(n) = c_\mathrm{b} n + o(\sqrt{n}),\quad \mu_\mathrm{w}(n) = c_\mathrm{w} n + o(\sqrt{n})
\end{align}
with $c_\mathrm{b}$ and $c_\mathrm{w}$ given in (\ref{randcon}). As in the normal limit cases in the examples in section \ref{ex1sec} we first need  asymptotic expressions for the variances.  We denote the variances of
 $B_n^\mathrm{b}$ and $B_n^\mathrm{w}$ with  $\hat{\sigma}^2_\mathrm{b}(n)$ and $\hat{\sigma}^2_\mathrm{w}(n)$.
These can be obtained from a result of  Matthews and Rosenberger \cite{maro97} for the number of draws of each color as follows:
\begin{lem}\label{async22ranb}
We have, as $n\to\infty$,
\begin{align}\label{async22ran}
\hat{\sigma}^2_\mathrm{b}(n) = f'_\mathrm{b} n + o(n), \quad \hat{\sigma}^2_\mathrm{w}(n) = f'_\mathrm{w} n + o(n),
\end{align}
with
\begin{align*}
f'_\mathrm{b}=f'_\mathrm{w}=\frac{\left(1-\alpha\right)\left(1-\beta\right)}{\left(1-\lambda\right)^{2}}
  \left(\frac{1}{1-2\lambda}-2\lambda\left(1+\lambda\right)\right)>0.
\end{align*}
\end{lem}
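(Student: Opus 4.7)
The idea is to reduce the variance of $B_n^\mathrm{b}$ (and symmetrically of $B_n^\mathrm{w}$) to the mean and variance of the count of draws of each color, for which Matthews and Rosenberger provide asymptotics in \cite{maro97}. This gives an exact one-step formula that only requires substitution and routine arithmetic.

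Let $N_n^\mathrm{b}$ denote the number of black-ball draws in the first $n$ steps when the urn is started with one black ball, and set $N_n^\mathrm{w}:=n-N_n^\mathrm{b}$. Conditionally on the entire sequence of drawn colors, the Bernoulli coin tosses $F_\alpha$ performed at the black draws and the coin tosses $F_\beta$ performed at the white draws are mutually independent of each other and of the draw sequence. Writing $X_n$ for the total number of additional black balls produced at black draws and $Y_n$ for those produced at white draws,
\begin{align*}
B_n^\mathrm{b} = 1 + X_n + Y_n, \qquad X_n \mid N_n^\mathrm{b} \sim \mathrm{Bin}(N_n^\mathrm{b},\alpha),\quad Y_n \mid N_n^\mathrm{w} \sim \mathrm{Bin}(N_n^\mathrm{w},1-\beta),
\end{align*}
with $X_n$ and $Y_n$ conditionally independent given $N_n^\mathrm{b}$. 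Since $\E[X_n+Y_n \mid N_n^\mathrm{b}] = \alpha N_n^\mathrm{b} + (1-\beta)(n-N_n^\mathrm{b}) = (1-\beta)n + \lambda N_n^\mathrm{b}$, the law of total variance yields the exact identity
\begin{align*}
\hat{\sigma}^2_\mathrm{b}(n) = \alpha(1-\alpha)\,\E[N_n^\mathrm{b}] + \beta(1-\beta)\,\E[N_n^\mathrm{w}] + \lambda^2\,\Var(N_n^\mathrm{b}).
\end{align*}

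Next I would invoke Matthews and Rosenberger \cite{maro97}, who give the leading-order asymptotics $\E[N_n^\mathrm{b}] = \frac{1-\beta}{1-\lambda}n + o(n)$, $\E[N_n^\mathrm{w}] = \frac{1-\alpha}{1-\lambda}n + o(n)$, and $\Var(N_n^\mathrm{b}) = \frac{(1-\alpha)(1-\beta)(3+2\lambda)}{(1-\lambda)^2(1-2\lambda)}\,n + o(n)$, all valid in the regime $\lambda<1/2$; the first two identities can also be extracted from Lemma \ref{ewker} via the relation $\E[B_n^\mathrm{b}] = 1 + (1-\beta)n + \lambda \E[N_n^\mathrm{b}]$ derived above. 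Substituting and using $\alpha+\beta = 1+\lambda$, a direct calculation collects the three contributions into
\begin{align*}
\frac{(1-\alpha)(1-\beta)}{(1-\lambda)^2} \cdot \frac{1-2\lambda+2\lambda^2+4\lambda^3}{1-2\lambda} = \frac{(1-\alpha)(1-\beta)}{(1-\lambda)^2}\left(\frac{1}{1-2\lambda} - 2\lambda(1+\lambda)\right),
\end{align*}
which is the claimed $f'_\mathrm{b}$. Running the identical argument with the urn started from one white ball gives the analogous decomposition $B_n^\mathrm{w} = \tilde{X}_n + \tilde{Y}_n$ and hence $\hat{\sigma}^2_\mathrm{w}(n)$; since the leading coefficients in MR's asymptotics for the draw counts do not depend on the initial color (that dependence enters only in lower-order terms), one obtains $f'_\mathrm{w}=f'_\mathrm{b}$. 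Positivity of $f'_\mathrm{b}$ on $\lambda\in(-1,1/2)$ reduces to the elementary inequality $2\lambda(1+\lambda)(1-2\lambda)<1$, which is immediate from elementary calculus. The main obstacle is purely computational: quoting the exact form of MR's variance asymptotic and carrying through the algebraic simplification — nothing probabilistically subtle remains.
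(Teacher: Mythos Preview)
Your proposal is correct and follows essentially the same route as the paper's proof: both decompose $B_n^\mathrm{b}$ as $1$ plus the Bernoulli coin outcomes at black and white draws, apply the law of total variance conditioning on the number $N_n$ of black draws, and then plug in the Matthews--Rosenberger asymptotics for $\E[N_n]$ and $\Var(N_n)$. You are somewhat more explicit about the intermediate identity $\hat{\sigma}^2_\mathrm{b}(n)=\alpha(1-\alpha)\E[N_n^\mathrm{b}]+\beta(1-\beta)\E[N_n^\mathrm{w}]+\lambda^2\Var(N_n^\mathrm{b})$ and the algebraic simplification, but there is no substantive difference.
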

\begin{proof}
In \cite{maro97}, for the present urn model, the number $N_{n}$ of draws within the first $n$ draws in which a black
  ball is drawn is studied. Starting with one black ball it is established in \cite{maro97}, as
  $n\to\infty$, that
  \begin{align*}
    \E\left[N_{n}\right]&=\frac{1-\beta}{1-\lambda}\, n+o(n),\\
    \Var(N_{n}),
    &=\frac{\left(1-\alpha\right)\left(1-\beta\right)\left(3+2\lambda\right)}%
                  {\left(1-\lambda\right)^{2}\left(1-2\lambda\right)}\, n+o(n).
  \end{align*}
  As each black ball in the urn is either the first ball, or has been
  added after drawing a black ball and having success in tossing the
   corresponding coin, or after drawing a white ball and having no success in
  tossing the coin, we can directly link
  $N_{n}$ to $B^\mathrm{b}_n$:   Denoting the coin flips
  after drawing black balls by $(F^\mathrm{b}_j)_{1\le j\le N_{n}}$, the
  coin flips after drawing white balls by $(F^\mathrm{w}_j)_{1\le j\le
    (n-N_{n})}$ we have
  \begin{align*}
    B_n^{\mathrm{b}}
    &= 1
    +\sum_{j=1}^{N_{n}}F^\mathrm{b}_j
    +\sum_{j=1}^{n-N_{n}}\left(1-F^\mathrm{w}_j\right).
  \end{align*}
Using that all coin flips are independent we obtain from the  law of total variance by
  conditioning on $N_{n}$ that
  \begin{align*}
    \hat{\sigma}^{2}_{b}(n)
    &=\E\left[\Var\!\left(B_n^{\mathrm{b}}\bigm|N_{n}\right)\right]
      +\Var\!\left(\E\left[B_n^{\mathrm{b}}\bigm|N_{n}\right]\right)
    \\
    &=\frac{\left(1-\alpha\right)\left(1-\beta\right)}{\left(1-\lambda\right)^{2}}
      \left(\frac1{1-2\lambda}-2\lambda\left(1+\lambda\right)\right) n
      +o(n).
  \end{align*}
  When starting with one white ball, a similar argument gives the corresponding result.
\end{proof}
We use the normalizations $X_0:=Y_0:=0$ and, cf.~\eqref{scalb},
\begin{align}\label{normcase11b2}
X_n:=\frac{B_n^\mathrm{b}-\mu_\mathrm{b}(n)}{\hat{\sigma}_\mathrm{b}(n)},\quad
Y_n:=\frac{B_n^\mathrm{w}-\mu_\mathrm{w}(n)}{\hat{\sigma}_\mathrm{w}(n)}, \quad n\ge 1.
\end{align}
From the system (\ref{rder1})--(\ref{rder2})  we obtain for the scaled quantities $X_n$, $Y_n$ the system, for $n\ge 1$,
\begin{align*}
 X_n &\stackrel{d}{=} \frac{\hat{\sigma}_\mathrm{b}(I_n)}{\hat{\sigma}_\mathrm{b}(n)}X^{(1)}_{I_n}
+ F_{\alpha}\frac{\hat{\sigma}_\mathrm{b}(J_n)}{\hat{\sigma}_\mathrm{b}(n)} X^{(2)}_{J_n}
+ \left(1-F_{\alpha}\right) \frac{\hat{\sigma}_\mathrm{w}(J_n)}{\hat{\sigma}_\mathrm{b}(n)}  Y_{J_n} + e'_\mathrm{b}(n), \\
  Y_n &\stackrel{d}{=}   \frac{\hat{\sigma}_\mathrm{w}(I_n)}{\hat{\sigma}_\mathrm{w}(n)}  Y^{(1)}_{I_n} +F_{\beta}\frac{\hat{\sigma}_\mathrm{w}(J_n)}{\hat{\sigma}_\mathrm{w}(n)}   Y^{(2)}_{J_n} + \left(1-F_{\beta}\right)\frac{\hat{\sigma}_\mathrm{b}(J_n)}{\hat{\sigma}_\mathrm{w}(n)} X_{J_n}+ e'_\mathrm{w}(n),
\end{align*}
with conditions on independence and identical distributions analogously to (\ref{rder1})--(\ref{rder2}). We have  $\|e'_\mathrm{b}(n)\|_\infty, \|e'_\mathrm{w}(n)\|_\infty\to 0$ since the leading linear terms in the expansions (\ref{expmeanrannorm}) cancel out and the error terms $o(\sqrt{n})$ are asymptotically eliminated by the scaling of order $1/\sqrt{n}$.
 In view of  (\ref{coefcon}) this suggests for limits $X$ and $Y$ of $X_n$ and $Y_n$ respectively
\begin{align}\label{limsysran}
 X &\stackrel{d}{=} \sqrt{U}X^{(1)}
+ F_{\alpha}\sqrt{1-U} X^{(2)}
+ \left(1-F_{\alpha}\right) \sqrt{1-U} Y^{(1)}, \\
  Y &\stackrel{d}{=}   \sqrt{U}  Y^{(1)} +F_{\beta} \sqrt{1-U}  Y^{(2)} + \left(1-F_{\beta}\right)\sqrt{1-U} X^{(1)}, \label{limsysran1}
\end{align}
where $X^{(1)}$, $X^{(2)}$, $Y^{(1)}$, $Y^{(2)}$ and $U$ are independent and $X^{(1)}$, $X^{(2)}$ are distributed as $X$ and $Y^{(1)}$, $Y^{(2)}$ are distributed as $Y$.
To the map associated to the system (\ref{limsysran})--(\ref{limsysran1}) we can apply Theorem \ref{fp2}. The conditions (\ref{normcoe}) and (\ref{techcon}) are trivially satisfied. Hence $({\cal N}(0,1),{\cal N}(0,1))$ is the unique fixed-point of the associated map  in the space ${\cal M}^\R_3(0,1)\times {\cal M}^\R_3(0,1)$.
\begin{thm}\label{lim22nor2}
Consider the P\'olya urn with random replacement matrix (\ref{reprand2x2}) with $\alpha,\beta\in(0,1)$ and $\alpha+\beta < 3/2$ and the normalized numbers $X_n$ and $Y_n$  of black balls as in (\ref{normcase11b2}).
Then, as $n\to \infty$,
\begin{align*}
X_n \stackrel{d}{\longrightarrow} {\cal N}(0,1), \quad  Y_n \stackrel{d}{\longrightarrow} {\cal N}(0,1).
\end{align*}
\end{thm}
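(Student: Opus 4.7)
The plan is to proceed in complete analogy with the proof of Theorem \ref{lim22nor}, adapting the argument to the random-coefficient setting induced by the coin flips $F_\alpha, F_\beta$. First I would define accompanying sequences $\tilde{Q}^\mathrm{b}_n, \tilde{Q}^\mathrm{w}_n$ by replacing in the scaled recursion for $X_n,Y_n$ the copies $X^{(\cdot)}_{I_n}, X^{(\cdot)}_{J_n}, Y^{(\cdot)}_{I_n}, Y^{(\cdot)}_{J_n}$ with i.i.d.\ standard normals $N_r$, each multiplied by indicators ${\bf 1}_{\{I_n\ge 2\}}$ or ${\bf 1}_{\{J_n\ge 2\}}$ to keep the variance ratios well-defined and the relevant $\zeta_3$-distances finite. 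As in (\ref{limperi2}), these accompanying variables are centered and have variance one (up to negligible corrections absorbed in $e'_\mathrm{b}(n),e'_\mathrm{w}(n)$), so they lie in ${\cal M}^\R_3(0,1)$.

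Next I would split, for each coordinate,
\[
\zeta_3(X_n,{\cal N}(0,1)) \le \zeta_3(X_n,\tilde{Q}_n^\mathrm{b}) + \zeta_3(\tilde{Q}_n^\mathrm{b},{\cal N}(0,1)),
\]
and the analogous inequality for $Y_n$. The second summand tends to $0$ by bounding $\zeta_3$ by $\ell_2$ via (\ref{zetaellest}) and invoking the $L_2$-convergence $(I_n/n, J_n/n) \to (U, 1-U)$ from (\ref{coefcon}) together with Lemma \ref{async22ranb}; the identity $f'_\mathrm{b}=f'_\mathrm{w}$ ensures that all random coefficients of the accompanying sums converge in $L_2$ to the coefficients of the limit system (\ref{limsysran})--(\ref{limsysran1}). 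For the first summand, I would condition on the pair $(I_n,F_\alpha)$ (respectively $(I_n,F_\beta)$ for $Y_n$) and use the $(3,+)$-ideality of $\zeta_3$ together with (\ref{sumabsch}).

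Combining both components and writing $\tilde{\Delta}(n):=\zeta_3^\vee((X_n,Y_n),({\cal N}(0,1),{\cal N}(0,1)))$, this produces an inequality of the form
\[
\tilde{\Delta}(n) \le \E\!\left[\left(\frac{\hat\sigma_\mathrm{b}(I_n)}{\hat\sigma_\mathrm{b}(n)}\right)^{\!\!3} \tilde{\Delta}(I_n)\right] + \E\!\left[\left(\frac{\hat\sigma_\mathrm{b}(J_n)}{\hat\sigma_\mathrm{b}(n)}\right)^{\!\!3} \tilde{\Delta}(J_n)\right] + o(1),
\]
where again the equality $f'_\mathrm{b}=f'_\mathrm{w}$ lets the bound arising from the $Y$-recursion be absorbed into the same right-hand side. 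By Lemma \ref{async22ranb} and (\ref{coefcon}) the sum of the expected cubed coefficients converges to $\E[U^{3/2}]+\E[(1-U)^{3/2}] = 4/5 < 1$, which plays the role of $\vartheta$ in (\ref{rec_fia}). The standard boundedness-then-$\limsup$ argument at the end of the proof of Theorem \ref{22nnc} then yields $\tilde{\Delta}(n)\to 0$, and convergence in $\zeta_3$ implies the asserted weak convergence.

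The main obstacle, as in Theorem \ref{lim22nor}, is the careful bookkeeping ensuring that all $\zeta_3$-distances stay finite (hence the indicators ${\bf 1}_{\{\cdot\ge 2\}}$) and that the Bernoulli variables $F_\alpha, F_\beta$ inside the recursion do not spoil ideality. Since $F_\alpha,1-F_\alpha\in\{0,1\}$, after conditioning on $(I_n,F_\alpha)$ the coefficient multiplying each standard normal becomes deterministic and $(3,+)$-ideality applies directly; the effect of the coin flip survives only through the averaged weight $\E[F_\alpha (J_n/n)^{3/2}]+\E[(1-F_\alpha)(J_n/n)^{3/2}]=\E[(J_n/n)^{3/2}]$, which is precisely what feeds the contraction estimate above and keeps it below one under the assumption $\alpha+\beta<3/2$.
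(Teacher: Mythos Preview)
Your proposal is correct and follows essentially the same route as the paper, which simply states ``Analogously to the proof of Theorem \ref{lim22nor}''; you have in fact filled in the adaptation (conditioning on $(I_n,F_\alpha)$ resp.\ $(I_n,F_\beta)$ so that the Bernoulli coefficients become deterministic and $(3,+)$-ideality applies) more explicitly than the paper does. One small remark: the contraction constant $\E[U^{3/2}]+\E[(1-U)^{3/2}]=4/5$ is $<1$ independently of $\alpha,\beta$; the hypothesis $\alpha+\beta<3/2$ enters not through the contraction bound but through the expansions (\ref{expmeanrannorm}) and Lemma \ref{async22ranb}, which are what force $\|e'_\mathrm{b}(n)\|_\infty,\|e'_\mathrm{w}(n)\|_\infty\to 0$ and make the variance ratios converge.
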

\begin{proof} Analogously to the proof of Theorem \ref{lim22nor}. \end{proof}
\noindent
{\bf Remark.} The case $\alpha+\beta=3/2$ differs in the error terms in
(\ref{expmeanrannorm}) which then become $\bo(\sqrt{n})$. Since the variances in (\ref{async22ran}) get additional logarithmic factors we still obtain the system (\ref{limsysran})--(\ref{limsysran1}) and our proof technique does still apply.

\subsection{Cyclic urns} \label{excyc}
We fix an integer $m\ge 2$ and consider an urn with balls of types
$1,\ldots,m$. After drawing a ball of type $j$ it is placed back to the urn together with a ball of type $j+1$ if $1\le j\le m-1$ and together with a ball of type $1$ if $j=m$. These urn models are called {\em cyclic urns}. Thus, the replacement matrix of a cyclic urn has the form
\begin{align}\label{repcyc}
R=\left[
  \begin{array}{ccccc}
    0 & 1 & &  & 0\\
       & 0  & 1  & & \\
       & & 0 & & \\
       &  &  & \ddots  &1 \\
     1 & & & &0
  \end{array}\right].
\end{align}
We denote by $R_n^{[j]}$ the number of type $1$ balls after $n$ draws when initially  one ball of type $j$ is  contained in the urn. Our recursive approach described above yields the system of recursive distributional equations
\begin{align}\label{cycscs}
R_n^{[1]} &\stackrel{d}{=} R_{I_n}^{[1]} + R_{J_n}^{[2]}, \\
R_n^{[2]} &\stackrel{d}{=} R_{I_n}^{[2]} + R_{J_n}^{[3]}, \nonumber\\
&\;\;\vdots  \nonumber \\
R_n^{[m]}&\stackrel{d}{=} R_{I_n}^{[m]} + R_{J_n}^{[1]},\nonumber
\end{align}
where, on the right hand sides, $I_n$ and $R_k^{[j]}$ for $j=1,\ldots,m$, $k=0,\ldots,n-1$ are independent, $I_n$ uniformly distributed on $\{0,\ldots,n-1\}$ and $J_n=n-1-I_n$.

We denote the imaginary unit by $\mathrm{i}$ and use the  primitive roots of unity
\begin{align}\label{rootofunity}
  \omega:= \omega_m:=
  \exp\left(\frac{2\pi \mathrm{i}}{m}\right)=: \lambda + \mathrm{i}\mu
\end{align}
with $\lambda,\mu\in\R$. Note that for $2\le m\le 6$ we have $\lambda\le 1/2$, while for $m\ge 7$ we have $\lambda>1/2$. Asymptotic expressions for the mean of the $R_{n}^{[j]}$ can be found (together with further analysis) in \cite{Ja83, Ja04, Pou05}. To keep this section self-contained we give an exact formula for later use:
\begin{lem}\label{lemewcyc}
  Let $R_{n}^{[j]}$ be the number of balls of color 1 after $n$ draws in a cyclic urn with $m\ge 2$  colors, starting with one ball of color $j$. Then, with $\omega=\omega_m$ as in (\ref{rootofunity}) we have
  \begin{align}\label{cycew}
    \E\!\left[R_{n}^{[j]}\right]=\frac{n+1}{m}+\frac1m\sum_{k\in \{1,\ldots,m-1\}\setminus \{m/2\}}
    \frac{\Gamma(n+1+\omega^{k})}{\Gamma(n+1)\,\Gamma(\omega^{k}+1)}\omega^{k(j-1)}.
  \end{align}
In particular, we have $\E[R_{n}^{[j]}]=\frac{1}{m}n + \bo(1)$ for $m=2,3,4$ and, for   $m>4$, as $n\to\infty$,
  \begin{align}\label{cycew2}
    \E\left[R_n^{[j]}\right]=\frac{1}{m}n+\Re(\kappa_j n^{\mathrm{i}\mu}) n^\lambda + o(n^\lambda),
    \qquad \kappa_j:=\frac{2\omega^{j-1}}{m\Gamma(\omega+1)}.
  \end{align}
\end{lem}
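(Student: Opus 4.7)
The plan is to reduce the mean vector to an iteration of the linear operator $I+\tfrac{1}{n}R$ and then diagonalize $R$ in the discrete Fourier basis. Set $\mu_n^{[j]}:=\E[R_n^{[j]}]$ and $\boldsymbol{\mu}_n:=(\mu_n^{[1]},\ldots,\mu_n^{[m]})^T$. Taking expectations in (\ref{cycscs}), and using that $I_n$ is uniform on $\{0,\ldots,n-1\}$ and $J_n\stackrel{d}{=}I_n$, gives $n\mu_n^{[j]}=\sum_{k=0}^{n-1}(\mu_k^{[j]}+\mu_k^{[j+1]})$ with indices taken modulo $m$. Subtracting the same identity at $n-1$ yields the one-step matrix recursion $\boldsymbol{\mu}_n=(I+\tfrac{1}{n}R)\boldsymbol{\mu}_{n-1}$, and since $\boldsymbol{\mu}_0=e_1$, iteration produces $\boldsymbol{\mu}_n=\prod_{l=1}^n(I+\tfrac{1}{l}R)\,e_1$.

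The eigenvalues of the cyclic shift $R$ are $\omega^k$, $k=0,\ldots,m-1$, with eigenvectors $v_k=(\omega^{k(j-1)})_{j=1}^m$, and orthogonality of the Fourier basis yields $e_1=\tfrac{1}{m}\sum_{k=0}^{m-1}v_k$. Applying the iteration in each eigenspace, reading off the $j$-th coordinate, and using the elementary identity $\prod_{l=1}^n(1+\omega^k/l)=\Gamma(n+1+\omega^k)/[\Gamma(n+1)\Gamma(1+\omega^k)]$ then produces
\[
\mu_n^{[j]} \;=\; \frac{1}{m}\sum_{k=0}^{m-1} \frac{\Gamma(n+1+\omega^k)}{\Gamma(n+1)\,\Gamma(\omega^k+1)}\,\omega^{k(j-1)}.
\]
The $k=0$ summand equals $(n+1)/m$ and, if $m$ is even, the $k=m/2$ summand vanishes since $\omega^{m/2}=-1$ makes $\Gamma(\omega^k+1)$ a pole (consistent with the telescoping $\prod_{l=1}^n(1-1/l)=0$ for $n\ge1$); collecting these observations gives (\ref{cycew}).

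For the asymptotic expansion I would invoke the standard Stirling estimate $\Gamma(n+1+\omega^k)/\Gamma(n+1)=n^{\omega^k}(1+\bo(1/n))$, so the $k$-th summand contributes at order $n^{\Re(\omega^k)}$. When $m>4$, the real part $\cos(2\pi k/m)$ is strictly maximized on $\{1,\ldots,m-1\}\setminus\{m/2\}$ at $k=1$ and $k=m-1$; the two corresponding contributions are complex conjugates of each other (since $\omega^{m-1}=\bar\omega$ and $\Gamma$ preserves conjugation), so their sum equals $\tfrac{2}{m}\Re\bigl[\omega^{j-1}n^\omega/\Gamma(1+\omega)\bigr]+o(n^\lambda)=\Re(\kappa_j n^{\mathrm{i}\mu})n^\lambda+o(n^\lambda)$ with $\kappa_j$ as claimed, while the remaining summands are $o(n^\lambda)$. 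The cases $m=2,3,4$ are immediate: for $m=2$ the sum in (\ref{cycew}) is empty and the formula is exact; for $m=3,4$ every surviving $\omega^k$ satisfies $\Re(\omega^k)\le 0$, so the correction is $\bo(1)$. I do not anticipate a genuine obstacle; the only care points are the vanishing of the $\omega^k=-1$ contribution and the conjugate pairing of $k=1$ and $k=m-1$ that produces the factor $2/m$ in $\kappa_j$.
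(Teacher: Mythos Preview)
Your argument is correct and follows essentially the same route as the paper: derive the one-step recursion $\boldsymbol{\mu}_n=(I+\tfrac{1}{n}R)\boldsymbol{\mu}_{n-1}$ from (\ref{cycscs}), iterate, diagonalize $R$ via the discrete Fourier basis, and then use Stirling together with conjugate pairing for the asymptotics. The only cosmetic difference is that the paper phrases the diagonalization via the eigenspace projections $\pi_{v_k}$ applied to $\E[R_0]$, whereas you expand $e_1$ directly in the eigenbasis; these are the same computation.
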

\begin{proof}
  Using the system (\ref{cycscs}), we obtain by conditioning on $I_{n}$, for any $1\le j\le m$,
  \begin{align*}
    \E\left[R_n^{[j]}\right]
    &=\frac1n\sum_{i=0}^{n-1}\E\left[R_i^{[j]}\right]+\frac1n\sum_{i=0}^{n-1}\E\left[R_i^{[j+1]}\right]
    \\
    &=\frac1n\left(\E\left[R_{n-1}^{[j]}\right]+\E\left[R_{n-1}^{[j+1]}\right]\right)
      +\frac{n-1}n \E\left[R_{n-1}^{[j]}\right]
    \\
    &=\E\left[R_{n-1}^{[j]}\right]+\frac1n\E\left[R_{n-1}^{[j+1]}\right],
  \end{align*}
  where we set $R_{i}^{[m+1]}:=R_{i}^{[1]}$ for any $1\le i\le n$.
  With the column vector $R_{n}:=(R_{n}^{[1]},\ldots,R_{n}^{[m]})$,
  the replacement matrix $R$ in (\ref{repcyc}) and the identity matrix
  $\mathrm{Id}_{m}$ this is rewritten as
  \begin{align*}
    \E\left[R_{n}\right]
    &=\left(\mathrm{Id}_{m}+\frac1n R\right)\E\left[R_{n-1}\right]
    =\prod_{k=1}^{n}\left(\mathrm{Id}_{m}+\frac1k R\right)\,\E\!\left[R_{0}\right].
  \end{align*}
  The eigenvalues of the replacement matrix are all $m$-th roots of
  unity $
  \omega^{k}$, $k=1,\dots,m$, and a possible eigenbasis is
  $v_{k}\coleq\frac1m(\omega^{0},\omega^{k},\dots,\omega^{\left(m-1\right)k})^{t}$,
  $k=1,\dots,m$. Decomposing the mapping induced by $R$ into the
  projections $\pi_{v_{k}}$ onto the respective eigenspaces
 we obtain
  \begin{align*}
    \prod_{\ell=1}^{n}\left(\mathrm{Id}_{m}+\frac1\ell R\right)
    &=
    \sum_{k=1}^{m}\prod_{\ell=1}^{n}\left(1+\frac1\ell\omega^{k}\right)\pi_{v_{k}}
    \\
    &=\left(n+1\right)\, \pi_{v_{m}}
     +\!\!\sum_{k\in\{1,\dots,m-1\}\setminus\{m/2\}}
       \frac{\Gamma\!\left(n+1+\omega^{k}\right)}%
       {\Gamma\!\left(\omega^{k}+1\right)\Gamma\!\left(n+1\right)}
       \,\pi_{v_{k}}.
  \end{align*}
  Moreover, $\pi_{v_{k}}(\E[R_{0}])=v_{k}$ and
  $v_{m}=\frac1m(1,\dots,1)$, hence the $j$-th component of the latter
  display implies (\ref{cycew}). The asymptotic expansion in
  (\ref{cycew2}) is now directly read off: Note that the roots of
  unity come in conjugate pairs $\omega^{m-k}=\overline{\omega}^{k}$.
  If $m$ is even, $\omega^{m/2}=\overline{\omega}^{m/2}=-1$, otherwise
  only $\omega^{m}=1$ is real. Combining pairs of summands for such
  conjugate pairs and using $\Gamma(\overline{z})=\overline{\Gamma(z)}$, we
  obtain the terms
  \begin{align*}
    \frac{\Gamma\!\left(n+1+\omega^{k}\right)\,\omega^{\left(j-1\right)k}}%
    {\Gamma\!\left(n+1\right)\,\Gamma\!\left(\omega^{k}+1\right)}
    +\frac{\Gamma\!\left(n+1+\overline{\omega}^{k}\right)
      \overline{\omega}^{\left(j-1\right)k}}%
    {\Gamma\!\left(n+1\right)\,\Gamma\!\left(\overline{\omega}^{k}+1\right)}
    =2\,\Re\!\left(
       \frac{\omega^{\left(j-1\right)k}\, \Gamma(n+1+\omega^{k})}%
         {\Gamma\!\left(\omega^{k}+1\right) \,\Gamma\!\left(n+1\right)}
       \right).
  \end{align*}
  By Stirling approximation the asymptotic growth order of the latter
  term is $\Re(n^{\omega^{k}})$, hence the dominant asymptotic term is
  for the conjugate pair with largest real part, $\omega$ and
  $\omega^{m-1}$. This implies (\ref{cycew2}) for $m>4$. For $m=3,4$
  the periodic term is $o(1)$ respectively $\bo(1)$, for $m=2$ there
  is no periodic fluctuation.
\end{proof}
We do not discuss limit laws for the cases $2\le m\le 6$ in detail.
They lead to asymptotic normality as has been shown with different
proofs in Janson \cite{Ja83} and Janson \cite[Example 7.9]{Ja04}.
These cases can be covered by our approach similarly to the normal
cases in sections \ref{ex1sec} and \ref{kersting}. For $2\le m\le 6$,
the system of limit equations is
\begin{align*}
X^{[1]} &\stackrel{d}{=} \sqrt{U} X^{[1]} +  \sqrt{1-U} X^{[2]}, \\
X^{[2]} &\stackrel{d}{=} \sqrt{U} X^{[2]} + \sqrt{1-U}  X^{[3]}, \\
&\;\;\vdots \\
X^{[m]}&\stackrel{d}{=} \sqrt{U} X^{[m]} + \sqrt{1-U} X^{[1]},
\end{align*}
and Theorem \ref{fp2} applies.

We now assume $m\ge 7$.
In particular, we have the  asymptotic expansion (\ref{cycew2}) of the mean of the $R_n^{[j]}$ with $\lambda>1/2$. We define the normalizations
\begin{align}\label{defXn2}
X_n^{[j]}:= \frac{R_n^{[j]}-\frac{1}{m}n}{n^\lambda}.
\end{align}
Hence, we obtain for the $X_n^{[j]}$ the system
\begin{align*}
X_n^{[1]} &\stackrel{d}{=} \left(\frac{I_n}{n}\right)^\lambda X_{I_n}^{[1]} +  \left(\frac{J_n}{n}\right)^\lambda X_{J_n}^{[2]}-\frac{1}{mn^\lambda}, \\
X_n^{[2]} &\stackrel{d}{=} \left(\frac{I_n}{n}\right)^\lambda X_{I_n}^{[2]} + \left(\frac{J_n}{n}\right)^\lambda X_{J_n}^{[3]}-\frac{1}{mn^\lambda}, \\
&\;\;\vdots \\
X_n^{[m]}&\stackrel{d}{=} \left(\frac{I_n}{n}\right)^\lambda X_{I_n}^{[m]} + \left(\frac{J_n}{n}\right)^\lambda X_{J_n}^{[1]}-\frac{1}{mn^\lambda},
\end{align*}
where, on the right hand sides, $I_n$ and $X_k^{[j]}$ for $j=1,\ldots,m$, $k=0,\ldots,n-1$ are independent.
To describe the asymptotic periodic behavior of the distributions of the $X_n^{[j]}$, we use the following related  system of limit equations:
\begin{align*}
X^{[1]} &\stackrel{d}{=} U^\omega X^{[1]} +  (1-U)^\omega X^{[2]}, \\
X^{[2]} &\stackrel{d}{=} U^\omega X^{[2]} + (1-U)^\omega  X^{[3]}, \\
&\;\;\vdots \\
X^{[m]}&\stackrel{d}{=} U^\omega X^{[m]} + (1-U)^\omega X^{[1]}.
\end{align*}
Since $\omega$ is complex nonreal this now has to be considered as a system to solve for distributions ${\cal L}(X^{[1]}),\ldots, {\cal L}(X^{[m]})$ on the complex plane $\C$.
The corresponding map $\bar{T}$ is a special case of $T'$ in (\ref{defT'}):
\begin{align}
\bar{T}: {\cal M}^{\C,\times m}
&\to {\cal M}^{\C,\times m} \nonumber\\
(\mu_1,\ldots,\mu_{m}) &\mapsto (\bar{T}_1(\mu_1,\ldots,\mu_{m}),\ldots, \bar{T}_{m}(\mu_1,\ldots,\mu_{m})) \nonumber\\
\bar{T}_j(\mu_1,\ldots,\mu_{m}) &:= {\cal L}\left(U^\omega V^{[j]} + (1-U)^\omega  V^{[j+1]}\right)
\label{defT}
\end{align}
for $j=1,\ldots,m$ with $U, V^{[1]}, \ldots,V^{[m+1]}$ independent, $U$ uniformly distributed on $[0,1]$ and ${\cal L}(V^{[j]})=\mu_j$ for $j=1,\ldots,m$ and ${\cal L}(V^{[ m+1]})=\mu_1$.
\begin{lem}\label{lem_fixed}
Let $m\ge 7$.
The restriction of   $\bar{T}$  to ${\cal M}^\C_2(\kappa_1)\times \cdots \times {\cal M}^\C_2(\kappa_{m})$   has a unique fixed-point.
\end{lem}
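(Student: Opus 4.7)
The plan is to verify that $\bar{T}$ fits the framework of Theorem \ref{fp3} with $d=d'=m$, $A_{j,1}=U^\omega$, $A_{j,2}=(1-U)^\omega$, $b_j=0$, $\pi(j,1)=j$, $\pi(j,2)=j+1$ (cyclically, so $\pi(m,2)=1$), and $\gamma_j=\kappa_j$. Three things need to be checked: square integrability, the centering condition (\ref{cccent}), and the contraction condition (\ref{cccont}).

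For square integrability, note that for $U\in(0,1)$ we have $|U^\omega|=|U^{\lambda+\mathrm{i}\mu}|=U^\lambda\cdot|e^{\mathrm{i}\mu\log U}|=U^\lambda$, and analogously $|(1-U)^\omega|=(1-U)^\lambda$. Hence
\begin{align*}
\E\!\left[|U^\omega|^2\right]=\E\!\left[|(1-U)^\omega|^2\right]=\int_0^1 u^{2\lambda}\,du=\frac{1}{2\lambda+1}<\infty.
\end{align*}

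For the contraction condition, since $m\ge 7$ we have $\lambda=\Re(\omega)=\cos(2\pi/m)>1/2$, so
\begin{align*}
\sum_{r=1}^{2}\E\!\left[|A_{j,r}|^2\right]=\frac{2}{2\lambda+1}<1,
\end{align*}
uniformly in $j$, giving (\ref{cccont}).

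For the centering condition, a direct calculation with the complex power gives $\E[U^\omega]=\int_0^1 u^\omega du=1/(\omega+1)$ and likewise $\E[(1-U)^\omega]=1/(\omega+1)$. So (\ref{cccent}) with $b_j=0$ reads
\begin{align*}
\frac{\kappa_j+\kappa_{j+1}}{\omega+1}=\kappa_j,\qquad j=1,\ldots,m,
\end{align*}
with the convention $\kappa_{m+1}=\kappa_1$. This is equivalent to $\kappa_{j+1}=\omega\kappa_j$, which holds from the explicit form $\kappa_j=\tfrac{2\omega^{j-1}}{m\Gamma(\omega+1)}$ in (\ref{cycew2}); in particular $\kappa_{m+1}=\kappa_1$ because $\omega^m=1$. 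Thus all hypotheses of Theorem \ref{fp3} are satisfied, and the lemma follows immediately from that theorem.

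The only potential obstacle is a careful treatment of complex powers of $U$ and $1-U$ and making sure the modulus computation $|U^\omega|=U^\lambda$ is correctly invoked so that the expectations are unambiguously defined; once this is set up the verification of the three hypotheses is routine arithmetic and the result is an immediate consequence of Theorem \ref{fp3}.
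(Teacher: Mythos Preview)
Your proof is correct and follows essentially the same route as the paper's own proof: both verify the hypotheses of Theorem~\ref{fp3} by computing $\E[U^\omega]=\E[(1-U)^\omega]=(1+\omega)^{-1}$ for the centering condition and $\E[|U^\omega|^2]=\E[|(1-U)^\omega|^2]=(1+2\lambda)^{-1}$ for the contraction condition, using $\lambda>1/2$ for $m\ge 7$. One small slip: you write $d'=m$ but then only define and sum two coefficients $A_{j,1},A_{j,2}$; the correct choice is $d'=2$ (or else set $A_{j,r}=0$ for $r\ge 3$), though this does not affect the argument.
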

\begin{proof}
We verify the conditions of Theorem \ref{fp3}: First note that condition (\ref{cccent}) for our $\bar{T}$ in (\ref{defT}) is
\begin{align}\label{conappl}
\E\left[U^\omega\right] \kappa_j + \E\left[(1-U)^\omega\right]  \kappa_{j+1}=\kappa_j, \quad j=1,\ldots,m,
\end{align}
with $\kappa_{m+1}:=\kappa_1$. Since $\E[U^\omega] =\E[(1-U)^\omega]=(1+\omega)^{-1}$ and
 $\kappa_{j+1}= \omega \kappa_j$ we find that (\ref{conappl}) is satisfied. Condition (\ref{cccont}) for our $\bar{T}$  is
 \begin{align*}
 \E\left[ \left|U^{2\omega}\right|\right]+  \E\left[ \left|(1-U)^{2\omega}\right|\right]<1.
\end{align*}
Since $m\ge 7$, we have $\lambda > 1/2$, thus $\E[ |U^{2\omega}|]+  \E[ |(1-U)^{2\omega}|] =2/(1+2\lambda)<1$.
Hence Theorem \ref{fp3} applies and implies the assertion.
\end{proof}
The fixed-point  in Lemma \ref{lem_fixed} has a particularly simple structure as follows. Note that a  description related to (\ref{fpesimpli}) was given in Remark 2.3 in Janson \cite{Ja06}.
\begin{lem}\label{lem_fixed_3}
Let $m\ge 7$ and $({\cal L}(\Lambda^{[1]}),\ldots,{\cal L}(\Lambda^{[m]}))$ be the unique fixed-point  in Lemma \ref{lem_fixed}. Furthermore let ${\cal L}(\Lambda)$ be the (unique) fixed-point of
\begin{align}\label{fpesimpli}
X\stackrel{d}{=}U^\omega X + \omega (1-U)^\omega X' \quad \mbox{ in } \quad {\cal M}^\C_2\left(\frac{2}{m\Gamma(\omega+1)}\right),
\end{align}
where $X$, $X'$ and $U$ are independent, $U$ is uniformly distributed on $[0,1]$  and $X$ and $X'$ have identical distributions. Then we have
\begin{align*}
 \Lambda^{[j]} \stackrel{d}{=} \omega^{j-1} \Lambda, \quad j=1,\ldots,m.
\end{align*}
\end{lem}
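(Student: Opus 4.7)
The plan is to guess the explicit solution $\Lambda^{[j]}\stackrel{d}{=}\omega^{j-1}\Lambda$ claimed by the lemma, verify that this tuple is a fixed-point of $\bar{T}$ on the product space ${\cal M}^\C_2(\kappa_1)\times\cdots\times{\cal M}^\C_2(\kappa_m)$, and then invoke the uniqueness part of Lemma~\ref{lem_fixed} to conclude.

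First I would establish that (\ref{fpesimpli}) itself has a unique solution in ${\cal M}^\C_2(2/(m\Gamma(\omega+1)))$ by applying Theorem~\ref{fp3} with $d=1$, $d'=2$, coefficients $(A_{11},A_{12})=(U^\omega,\omega(1-U)^\omega)$, $b_1=0$ and $\gamma_1=2/(m\Gamma(\omega+1))$. The centering condition (\ref{cccent}) reduces, using $\E[U^\omega]=\E[(1-U)^\omega]=(1+\omega)^{-1}$, to the identity $\gamma_1(1+\omega)/(1+\omega)=\gamma_1$. The contraction condition (\ref{cccont}) becomes $\E[|U^\omega|^2]+\E[|\omega(1-U)^\omega|^2]=\E[U^{2\lambda}]+\E[(1-U)^{2\lambda}]=2/(2\lambda+1)<1$, which holds since $m\ge 7$ forces $\lambda>1/2$.

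Next I would set $\tilde{\Lambda}^{[j]}:=\omega^{j-1}\Lambda$ for $j=1,\ldots,m$ and verify that $(\tilde{\Lambda}^{[1]},\ldots,\tilde{\Lambda}^{[m]})$ lies in the work space and is a fixed-point of $\bar{T}$. Membership is immediate: $|\omega^{j-1}|=1$ gives $\tilde{\Lambda}^{[j]}\in{\cal M}^\C_2$, and $\E[\tilde{\Lambda}^{[j]}]=\omega^{j-1}\cdot 2/(m\Gamma(\omega+1))=\kappa_j$ by the very definition of $\kappa_j$ in (\ref{cycew2}). For the fixed-point property I take independent copies $\Lambda,\Lambda'$ of $\Lambda$ and an independent $U$ uniform on $[0,1]$; then the definition (\ref{defT}) of $\bar{T}_j$ together with $\omega^j=\omega\cdot\omega^{j-1}$ yields
\[
\bar{T}_j(\tilde{\Lambda}^{[1]},\ldots,\tilde{\Lambda}^{[m]})
={\cal L}\bigl(U^\omega\,\omega^{j-1}\Lambda+(1-U)^\omega\,\omega^{j}\Lambda'\bigr)
={\cal L}\bigl(\omega^{j-1}\bigl[U^\omega\Lambda+\omega(1-U)^\omega\Lambda'\bigr]\bigr),
\]
and by (\ref{fpesimpli}) the bracketed expression is distributed as $\Lambda$, so the right-hand side equals ${\cal L}(\omega^{j-1}\Lambda)=\tilde{\Lambda}^{[j]}$. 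The cyclic case $j=m$ is consistent because $\omega^m=1$, so $\tilde{\Lambda}^{[m+1]}=\omega^m\Lambda=\Lambda=\tilde{\Lambda}^{[1]}$.

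By the uniqueness statement in Lemma~\ref{lem_fixed} the tuple $(\tilde{\Lambda}^{[j]})_{1\le j\le m}$ coincides in distribution with $(\Lambda^{[j]})_{1\le j\le m}$, which is the claim. I do not anticipate a real obstacle here; the only point requiring brief care is the bookkeeping of independent copies, namely that the two independent random variables $V^{[j]}$ and $V^{[j+1]}$ appearing in the definition of $\bar{T}_j$ supply precisely the two independent copies of $\Lambda$ that are needed on the right-hand side of (\ref{fpesimpli}) after the common factor $\omega^{j-1}$ has been pulled out.
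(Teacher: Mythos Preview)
Your proposal is correct and follows essentially the same route as the paper: first establish existence and uniqueness of ${\cal L}(\Lambda)$ for (\ref{fpesimpli}) via the contraction argument of Theorem~\ref{fp3}, then check that $(\omega^{j-1}\Lambda)_{1\le j\le m}$ lies in ${\cal M}^\C_2(\kappa_1)\times\cdots\times{\cal M}^\C_2(\kappa_m)$ and is a fixed-point of $\bar T$, and finally appeal to the uniqueness in Lemma~\ref{lem_fixed}. Your write-up is in fact slightly more explicit than the paper's, which simply refers back to the proof of Theorem~\ref{fp3} rather than spelling out the $d=1$ instance.
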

\begin{proof}We abbreviate $\gamma:=2/(m\Gamma(\omega+1))$.
For $X$, $X'$ and $U$ independent, $U$ uniformly on $[0,1]$ distributed, $X$ and $X'$ identically distributed with $\E X=\gamma$ we have
\begin{align*}
 \E\left[U^\omega X + \omega (1-U)^\omega X'\right]=\frac{1}{1+\omega}(\gamma +\omega\gamma)=\gamma,
\end{align*}
hence the map of probability measures on $\C$ associated to (\ref{fpesimpli}) maps ${\cal M}^\C_2(\gamma)$ into itself. The argument of the proof of Theorem \ref{fp3} implies that
this map is a contraction on $({\cal M}^\C_2(\gamma),\ell_2)$. Hence it has a unique fixed point ${\cal L}(\Lambda)$. We have
\begin{align*}
 ({\cal L}(\Lambda), {\cal L}(\omega\Lambda),\ldots,{\cal L}(\omega^{m-1}\Lambda))\in {\cal M}^\C_2(\kappa_1)\times \cdots \times {\cal M}^\C_2(\kappa_{m})
\end{align*}
and, by plugging into (\ref{defT}), we find that this vector is a fixed-point of  $\bar{T}$. Since, by Lemma \ref{lem_fixed}, there is only one fixed-point of $\bar{T}$ in ${\cal M}^\C_2(\kappa_1)\times \cdots \times {\cal M}^\C_2(\kappa_{m})$ the assertion follows.
\end{proof}
The asymptotic periodic behavior in the following theorem has already been shown almost surely by martingale methods in \cite[Section 4.2]{Pou05}, see also \cite[Theorem 3.24]{Ja04}. Our contraction approach adds the characterization of ${\cal L}(\Lambda)$ as the fixed-point in (\ref{fpesimpli}). The proof is based on the complex version of the $\ell_2$ metric and resembles ideas from Fill and Kapur \cite{FiKa04}, see also \cite[Theorem 5.3]{JaNe08}.
\begin{thm}
Let $m\ge 7$ and $X_n^{[j]}$ as in (\ref{defXn2})
 and  ${\cal L}(\Lambda)$ the unique fixed-point in Lemma \ref{lem_fixed_3}.
Then, for all $j=1,\ldots,m$,  we have
\begin{align} \label{limperic}
\ell_2\left( X_n^{[j]}, \Re\left(e^{\mathrm{i}\left(\mu \ln(n) + 2\pi \frac{j-1}{m}\right)}\Lambda\right) \right) \to 0 \quad (n\to \infty).
\end{align}
\end{thm}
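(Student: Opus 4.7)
The plan is to lift the real $X_n^{[j]}$ to the complex plane via $Y_n^{[j]} := (R_n^{[j]} - n/m)/n^\omega$, run a contraction argument in the complex $\ell_2^\vee$ metric, and project back to the real line at the end. Since $X_n^{[j]}$ is real one has $n^{\mathrm{i}\mu}Y_n^{[j]} = X_n^{[j]}$, and using $\bar\omega = \omega - 2\mathrm{i}\mu$ a short computation verifies that the $n$-dependent complex random variable $\tilde\Lambda_n^{[j]} := \tfrac{1}{2}(\Lambda^{[j]} + n^{-2\mathrm{i}\mu}\bar\Lambda^{[j]})$ satisfies $\Re(n^{\mathrm{i}\mu}\tilde\Lambda_n^{[j]}) = \Re(n^{\mathrm{i}\mu}\Lambda^{[j]})$. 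Combining the $1$-Lipschitz property of $\Re(\cdot)$ with the invariance of $\ell_2$ under multiplication by unimodular scalars yields the projection inequality $\ell_2(X_n^{[j]}, \Re(n^{\mathrm{i}\mu}\Lambda^{[j]})) \le \ell_2(Y_n^{[j]}, \tilde\Lambda_n^{[j]})$, reducing the problem to showing that the right-hand side vanishes in the complex $\ell_2$ metric.

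From (\ref{cycscs}) and $I_n + J_n = n - 1$ one directly obtains $Y_n^{[j]} \eqd (I_n/n)^\omega Y_{I_n}^{[j]} + (J_n/n)^\omega Y_{J_n}^{[j+1]} - 1/(mn^\omega)$, and a direct calculation with the fixed-point equation of Lemma \ref{lem_fixed} for $\Lambda^{[j]}$ and its conjugate, invoking $\omega - 2\mathrm{i}\mu = \bar\omega$, verifies the exact identity $\tilde\Lambda_r^{[j]} \eqd U^\omega \tilde\Lambda_{rU}^{[j]} + (1-U)^\omega \tilde\Lambda_{r(1-U)}^{[j+1]}$ for every $r>0$. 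Hence $(Y_n^{[j]})$ and $(\tilde\Lambda_n^{[j]})$ satisfy recursions of the same functional form, so the accompanying-sequence technique from the proof of Theorem \ref{22nnc} applies.

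Introduce the accompanying sequence $Q_n^{[j]} := (I_n/n)^\omega \tilde\Lambda_{I_n}^{[j],(1)} + (J_n/n)^\omega \tilde\Lambda_{J_n}^{[j+1],(2)} - (mn^\omega)^{-1}$ with independent copies $\tilde\Lambda_k^{[j],(\ell)}$. Coupling $I_n = \lfloor nU \rfloor$ with $U$ uniform on $[0,1]$ (cf.\ Lemma \ref{asysubsize}), the explicit identity $\tilde\Lambda_{I_n}^{[j]} - \tilde\Lambda_{nU}^{[j]} = \tfrac{1}{2}(I_n^{-2\mathrm{i}\mu} - (nU)^{-2\mathrm{i}\mu})\bar\Lambda^{[j]}$ together with $|I_n/(nU)| \to 1$ almost surely gives $\ell_2(Q_n^{[j]}, \tilde\Lambda_n^{[j]}) \to 0$. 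For the contraction step one conditions on $(I_n, J_n)$, chooses optimal couplings within each subtree, and uses $|z^\omega|^2 = |z|^{2\lambda}$; the cross terms are $o(1)$ because an explicit check using Lemma \ref{lemewcyc} shows $\E[Y_k^{[j]}] - \E[\tilde\Lambda_k^{[j]}] = o(1)$ (both equal $\kappa_j/2 + \bar\kappa_j k^{-2\mathrm{i}\mu}/2$ up to lower-order terms).

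Setting $\Delta(n) := \max_j \ell_2(Y_n^{[j]}, \tilde\Lambda_n^{[j]})$, the triangle inequality combined with the two previous estimates yields $\Delta(n)^2 \le (\E[(I_n/n)^{2\lambda}] + \E[(J_n/n)^{2\lambda}]) \max_{0\le k<n}\Delta(k)^2 + o(1)$. The prefactor converges to $2/(2\lambda+1)<1$ since $\lambda = \cos(2\pi/m) > 1/2$ for $m \ge 7$, so the standard $\limsup$ argument from the end of the proof of Theorem \ref{22nnc} forces $\Delta(n) \to 0$. The main obstacle is conceptual: $Y_n^{[j]}$ is confined to the rotating real line $\R \cdot n^{-\mathrm{i}\mu}$ and has no ordinary complex weak limit, so one must introduce the $n$-dependent target $\tilde\Lambda_n^{[j]}$ (whose oscillating factor $n^{-2\mathrm{i}\mu}$ exactly mirrors the oscillation in $\E[Y_n^{[j]}]$ visible in Lemma \ref{lemewcyc}) and verify that it satisfies a fixed-point identity of precisely the same shape as the recursion for $Y$; this hinges on the algebraic coincidence $\omega - 2\mathrm{i}\mu = \bar\omega$.
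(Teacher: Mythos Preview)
Your proof is correct and, after unpacking, is the same $\ell_2$ contraction argument the paper runs, but organized in complex rather than real coordinates. The paper stays on the real line throughout: it couples $X_n^{[j]}$ directly with $\Re(n^{\mathrm{i}\mu}\Lambda^{[j]})$, uses the fixed-point identity of $\Lambda^{[j]}$ to write $\Re(n^{\mathrm{i}\mu}\Lambda^{[j]}) \stackrel{d}{=} \Re(n^{\mathrm{i}\mu}U^\omega\Lambda^{[j]}) + \Re(n^{\mathrm{i}\mu}(1-U)^\omega\Lambda^{[j+1]})$, and then splits the $\ell_2$ distance via the triangle inequality into a main piece (your contraction step) plus error terms $S_2,S_3$ coming from replacing $(I_n/n)^\omega$ by $U^\omega$ (your $\ell_2(Q_n^{[j]},\tilde\Lambda_n^{[j]})\to 0$). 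Your lift $Y_n^{[j]} = n^{-\mathrm{i}\mu}X_n^{[j]}$ and target $\tilde\Lambda_n^{[j]} = n^{-\mathrm{i}\mu}\Re(n^{\mathrm{i}\mu}\Lambda^{[j]})$ are a unimodular rotation of the paper's objects, so $\ell_2(Y_n^{[j]},\tilde\Lambda_n^{[j]}) = \ell_2(X_n^{[j]},\Re(n^{\mathrm{i}\mu}\Lambda^{[j]}))$ and every estimate matches line by line; in particular your cross-term control via $\E[Y_k^{[j]}]-\E[\tilde\Lambda_k^{[j]}]=o(1)$ is exactly the paper's control via $\E[X_k^{[j]}]=\Re(\kappa_j k^{\mathrm{i}\mu})+o(1)$ together with $\E[\Lambda^{[j]}]=\kappa_j$. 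What your packaging buys is the clean statement that $\tilde\Lambda_r^{[j]}$ satisfies an exact recursion of the same shape as $Y_n^{[j]}$ (via $\omega-2\mathrm{i}\mu=\bar\omega$), which makes the accompanying-sequence template from Theorem~\ref{22nnc} transparently applicable; the paper instead manipulates real parts inside a single long estimate, which avoids introducing the auxiliary $\tilde\Lambda_n^{[j]}$ but is somewhat less modular.
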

\begin{proof}
Let $\Lambda^{[1]},\ldots,\Lambda^{[m]}$ be independent random variables such that $({\cal L}(\Lambda^{[1]}),\ldots,{\cal L}(\Lambda^{[m]}))$ is the unique fixed-point as in Lemma \ref{lem_fixed}. Set $\Lambda^{[m+1]}:=\Lambda^{[1]}$. Note that for the random variable within the real part in (\ref{limperic}) with Lemma \ref{lem_fixed_3} we have
\begin{align*}
e^{\mathrm{i}\left(\mu \ln(n) + 2\pi \frac{j-1}{m}\right)}\Lambda = n^{\mathrm{\mathrm{i}}\mu} \omega^{j-1} \Lambda \stackrel{d}{=}n^{\mathrm{i}\mu}\Lambda^{[j]}.
\end{align*}
The fixed-point property of the $\Lambda^{[j]}$  implies
\begin{align*}
\Re\left(n^{\mathrm{i}\mu} \Lambda^{[j]}\right) \stackrel{d}{=} \Re\left(n^{\mathrm{i}\mu} U^\omega \Lambda^{[j]}\right) + \Re\left(n^{\mathrm{i}\mu} (1-U)^\omega \Lambda^{[j+1]}\right)
\end{align*}
for all $j=1,\ldots,m$ and $n\ge 0$. We denote
\begin{align*}
\Delta_j(n):= \ell_2\left(X_n^{[j]}, \Re\left(n^{\mathrm{i}\mu} \Lambda^{[j]}\right)\right)
\end{align*}
and set $\Delta_{m+1}(n):=\Delta_1(n)$.
Now, we assume that the $X_n^{[j]}$, $\Lambda^{[j]}$, $n\ge 1$, $1\le j\le m$, $I_n$, $U$ appearing in (\ref{defXn2}) and (\ref{defT}) are defined on one probability space such that
$(X_n^{[j]}, \Re(n^{\mathrm{i}\mu} \Lambda^{[j]}))$ are optimal $\ell_2$-couplings for all $n\ge 0$ and all  $1\le j\le m$ and such that $I_n=\lfloor nU \rfloor$. Then we have
\begin{align} \label{basic_est}
\lefteqn{\Delta_j(n)} \nonumber\\
&= \ell_2\left( \left(\frac{I_n}{n}\right)^\lambda X_{I_n}^{[j]} + \left(\frac{J_n}{n}\right)^\lambda X_{J_n}^{[j+1]}-\frac{1}{mn^\lambda},  \Re\left(n^{\mathrm{i}\mu} U^\omega \Lambda^{[j]}\right) + \Re\left(n^{\mathrm{i}\mu} (1-U)^\omega \Lambda^{[j+1]}\right) \right)\nonumber\\
&\le \left\|\left\{ \left(\frac{I_n}{n}\right)^\lambda X_{I_n}^{[j]} - \Re\left(\frac{I_n^\omega}{n^\lambda} \Lambda^{[j]}\right)\right\} +\left\{ \left(\frac{J_n}{n}\right)^\lambda X_{J_n}^{[j+1]} - \Re\left(\frac{J_n^\omega}{n^\lambda} \Lambda^{[j+1]}\right)\right\} \right\|_2 \nonumber\\
& \quad~ + \left\|\Re\left(\frac{I_n^\omega}{n^\lambda} \Lambda^{[j]}\right) - \Re\left(n^{\mathrm{i}\mu} U^\omega \Lambda^{[j]}\right) \right\|_2 +  \left\|\Re\left(\frac{J_n^\omega}{n^\lambda} \Lambda^{[j+1]}\right) - \Re\left(n^{\mathrm{i}\mu} U^\omega \Lambda^{[j+1]}\right) \right\|_2 + \frac{1}{mn^\lambda}\nonumber \\
&=: S_1+S_2+S_3+ \frac{1}{mn^\lambda}.
\end{align}
First note that the summands $S_2$ and $S_3$ tend to zero: We have $(I_n/n)^\omega \to U^\omega$ almost surely by $I_n=\lfloor nU \rfloor$. Since $\Lambda^{[j]}$ and $\Lambda^{[j+1]}$ have finite second moments we can apply dominated convergence to obtain
$S_2,S_3 \to 0$ as $n\to\infty$.

For the estimate of the first summand $S_1$ we abbreviate
\begin{align*}
W_{n}^{[j]}:= \left(\frac{I_n}{n}\right)^\lambda X_{I_n}^{[j]} - \Re\left(\frac{I_n^\omega}{n^\lambda} \Lambda^{[j]}\right),\qquad
W_{n}^{[j+1]}:= \left(\frac{J_n}{n}\right)^\lambda X_{J_n}^{[j+1]} - \Re\left(\frac{J_n^\omega}{n^\lambda} \Lambda^{[j+1]}\right).
\end{align*}
Then we have
 \begin{align} \label{binom_form}
S_1^2= \E\left[(W_{n}^{[j]})^2\right] + \E\left[(W_{n}^{[j+1]})^2\right] + 2\E\left[W_{n}^{[j]}W_{n}^{[j+1]}\right]
\end{align}
Conditioning on $I_n$ and using that $(X_k^{[j]}, \Re(k^{\mathrm{i}\mu} \Lambda^{[j]}))$ are optimal $\ell_2$-couplings we obtain
\begin{align*}
\E\left[(W_{n}^{[j]})^2\right]&=\sum_{k=0}^{n-1} \frac{1}{n} \E\left[\left\{ \left(\frac{k}{n}\right)^\lambda X_{k}^{[j]} - \Re\left(\frac{k^\lambda k^{\mathrm{i}\mu}}{n^\lambda} \Lambda^{[j]}\right) \right\}^2 \right]\\
&=\sum_{k=0}^{n-1} \frac{1}{n}\left(\frac{k}{n}\right)^{2\lambda}\E\left[\left\{ X_{k}^{[j]} - \Re\left( k^{\mathrm{i}\mu} \Lambda^{[j]}\right) \right\}^2 \right]\\
&=\sum_{k=0}^{n-1} \frac{1}{n}\left(\frac{k}{n}\right)^{2\lambda} \Delta_j^2(k)\\
&= \E\left[ \left(\frac{I_n}{n}\right)^{2\lambda}\Delta_j^2(I_n)    \right].
\end{align*}
Analogously, we have
\begin{align*}
\E[(W_{n}^{[j+1]})^2]
= \E\left[ \left(\frac{J_n}{n}\right)^{2\lambda}\Delta_{j+1}^2(J_n)  \right].
\end{align*}
To bound the mixed term in (\ref{binom_form}) note that by the expansion (\ref{cycew2}) and the normalization (\ref{defXn2}) we have  $\E[X_n^{[j]}]=\Re(\kappa_j n^{\mathrm{i}\mu}) + r_j(n)$ with $r_j(n)\to 0$ as $n\to \infty$ for all $j=1,\ldots,m$.   In particular, we have $\|r_j\|_\infty<\infty$. Together with  $\E[\Lambda^{[j]}]=\kappa_j$ this implies $\E[W_{n}^{[j]}]=\E[(I_n/n)^\lambda r_j(I_n)]$ and
\begin{align} \label{mixed_term}
\E\left[W_{n}^{[j]}W_{n}^{[j+1]}\right]= \E\left[ \left(\frac{I_n}{n}\frac{J_n}{n}\right)^\lambda r_j(I_n) r_{j+1}(J_n)  \right].
\end{align}
To show that the latter term tends to zero let $\varepsilon>0$. Then there exists $k_0\in\Nat$ such that $r_j(k)<\varepsilon$, $r_{j+1}(k)<\varepsilon$ for all $k\ge k_0$. For all $n> 2k_0$ we obtain, by considering the event $\{k_0\le I_n\le n-1-k_0\}$ and its complement,
\begin{align*}
\E\left[W_{n}^{[j]}W_{n}^{[j+1]}\right] \le \frac{2k_0}{n} \|r_j\|_\infty\|r_{j+1}\|_\infty + \varepsilon^2.
\end{align*}
Hence, we obtain that the mixed term (\ref{mixed_term}) tends to zero.

Altogether, we obtain from (\ref{basic_est}) as $n\to\infty$ that
\begin{align*}
\Delta_j(n) &\le \left\{\E\left[ \left(\frac{I_n}{n}\right)^{2\lambda}\Delta_j^2(I_n) \right] +  \E\left[ \left(\frac{J_n}{n}\right)^{2\lambda}\Delta_{j+1}^2(J_n)  \right]+o(1)\right\}^{1/2} + o(1)\\
&\le  \left\{2\E\left[ \left(\frac{I_n}{n}\right)^{2\lambda}\Delta^2(I_n) \right]+o(1)\right\}^{1/2} + o(1),
\end{align*}
for all $j=1,\ldots,m$,
where
\begin{align*}
\Delta(n):=\max_{1\le j\le m} \Delta_j(n).
\end{align*}
Hence, we have
\begin{align} \label{rec_finish}
\Delta(n) \le  \left\{2\E\left[ \left(\frac{I_n}{n}\right)^{2\lambda}\Delta^2(I_n) \right]+o(1)\right\}^{1/2} + o(1).
\end{align}
Now, we obtain $\Delta(n)\to 0$ as in the proof of Theorem \ref{22nnc}: First from (\ref{rec_finish}) we obtain with $I_n/n\to U$ almost surely that
\begin{align*}
\Delta(n) &\le  \left\{2\E\left[ \left(\frac{I_n}{n}\right)^{2\lambda}\right]\max_{0\le k\le n-1} \Delta^2(k)+o(1)\right\}^{1/2} + o(1)\\
&\le  \left\{\left(\frac{2}{1+2\lambda}+o(1)\right)\max_{0\le k\le n-1} \Delta^2(k)+o(1)\right\}^{1/2} + o(1).
\end{align*}
Since $\lambda>1/2$ this implies that the sequence $(\Delta(n))_{n\ge 0}$ is bounded. We denote $\eta:=\sup_{n\ge 0} \Delta(n)$ and $\xi:=\limsup_{n\to\infty} \Delta(n)$. For any $\varepsilon> 0$ there exists an $n_0 \ge 0$ such that $\Delta(n) \le \xi+\varepsilon$  for all
$n \ge n_0$. Hence, from (\ref{rec_finish}) we obtain
\begin{align*}
\Delta(n) \le  \left\{2\E\left[{\bf 1}_{\{I_n<n_0\}} \left(\frac{I_n}{n}\right)^{2\lambda} \right]\eta^2+ 2\E\left[{\bf 1}_{\{I_n\ge n_0\}} \left(\frac{I_n}{n}\right)^{2\lambda} \right](\xi+\varepsilon)^2  +   o(1)\right\}^{1/2} + o(1).
\end{align*}
With $n\to\infty$ this implies
\begin{align*}
\xi \le  \sqrt{\frac{2}{1+2\lambda}}(\xi+\varepsilon).
\end{align*}
Since $\sqrt{2/(1+2\lambda)}<1$ and $\varepsilon>0$ is arbitrary this implies $\xi=0$.
\end{proof}

\section{Remarks on the use of the contraction method}\label{sec7}
 A novel technical aspect of this paper is that we extend the use of the contraction method to systems of recursive distributional equations. Alternatively, one may be tempted to couple the random variables
 $B^\mathrm{b}_n$ and $B^\mathrm{w}_n$ in (\ref{rde1}) and (\ref{rde2}) on one probability space, setup  a recurrence for their vector $(B^\mathrm{b}_n,B^\mathrm{w}_n)$ and try to apply general transfer theorems from the contraction method for multivariate recurrences, such as Theorem 4.1 in Neininger \cite{Ne01} or Theorem 4.1 in Neininger and R\"uschendorf \cite{NeRu04}. For some particular instances (replacement schemes) of the P\'olya urn this is in fact possible. However, when attempting to come up with a limit theory of the generality of the present paper such a multivariate approach hits two nags that seem difficult to overcome. In this section we highlight these problems at one of the examples discussed above and explain why we consider such a multivariate approach disadvantageous  in the context of P\'olya urns.

 We consider the example from section \ref{kersting} with the random replacement matrix in (\ref{reprand2x2}) and denote the bivariate random variable by $B_n:=(B^\mathrm{b}_n,B^\mathrm{w}_n)$ with $B^\mathrm{b}_n$ and $B^\mathrm{w}_n$ as in (\ref{rder1}) and (\ref{rder2}) respectively.
  Note that in the  discussion of section \ref{kersting} the random variables $B^\mathrm{b}_n$ and $B^\mathrm{w}_n$ did
not need to be defined on a common probability space. Hence, first of
all, only the marginals of $B_n$ are determined by
the urn process and we have the choice of a joint distribution for
$B_n$ respecting these marginals. We could keep the
components independent or choose appropriate couplings. We choose  a form that implies a recurrence of the form typically considered in  general limit theorems from the contraction method: The coupling is defined recursively by
$B_0=(1,0)$ and, for $n\geq 1$,
\begin{align}\label{bivdef}
B_n:\stackrel{d}{=} B_{I_n} +
\left[\!\!
\begin{array}{cc} F_\alpha & 1- F_\alpha \\
1-F_\beta & F_\beta \end{array}\!\!\right]
B'_{J_n},
\end{align}
where $(B_n)_{0\leq k<n}$, $(B'_n)_{0\leq k<n}$, $(F_\alpha,F_\beta)$, and $I_n$ are
independent and $B_k$ and $B'_k$ identically distributed  for all $0\le k<n$. As in section \ref{kersting}, $I_n$ is uniformly
distributed on $\{0,\dots,n-1\}$ and $J_n:=n-1-I_n$, while $F_\alpha$ and $F_\beta$ are Bernoulli random
variables being 1 with probabilities $\alpha$ and $\beta$
respectively, otherwise 0. Note that for any joint distribution of
$(F_\alpha,F_\beta)$, definition (\ref{bivdef}) leads to
a sequence $(B_n)_{n\geq1}$ with correct marginals of
$B^\mathrm{b}_n$ and $B^\mathrm{w}_n$.  A beneficial joint distribution of
$(F_\alpha,F_\beta)$
 will be chosen below.

We consider the cases where $\alpha+\beta-1<1/2$. Since these lead to normal limits one may try to apply Theorem 4.1 in \cite{NeRu04} where $2<s\le 3$ there is the index of the Zolotarev metric $\zeta_s$ on which that Theorem is based. The best possible contraction condition, cf.~equation (25) in \cite{NeRu04} is obtained with $s=3$ which we fix subsequently. Now, for the application of Theorem 4.1 in \cite{NeRu04} we need an asymptotic expansion of the covariance matrix of $B_n$. In view of Lemma \ref{async22ranb} we assume that  for all $i,j=1,2$ we have
\begin{align}\label{napp2}
(\Cov(B_n))_{ij}= f_{ij} n + o(n), \quad (n\to \infty)
\end{align}
such that $(f_{ij})_{ij}$ is a symmetric, positive definite $2\times 2$ matrix. Hence there exists an $n_1\ge 1$ such that $\Cov(B_n)$ is positive definite for all $n\ge n_1$. For the normalized random sequence
\begin{align*}
X_n:= (\Cov(B_n))^{-1/2}(B_n-\E[B_n]), \quad n\ge n_1,
\end{align*}
we obtain the limit equation
\begin{align*}
X\stackrel{d}{=} \sqrt{U}X+ \sqrt{1-U}\left[\!\!
\begin{array}{cc} F_\alpha & 1- F_\alpha \\
1-F_\beta & F_\beta \end{array}\!\!\right] X',
\end{align*}
where $X,X',U,(F_\alpha,F_\beta)$ are independent, $X$ and $X'$ are identically distributed and $U$ is uniformly distributed on $[0,1]$. Now the application of Theorem 4.1 in \cite{NeRu04} requires condition (25) there to be satisfied which in our example writes
\begin{align}\label{napp1}
\E\left[U^{3/2}\right]+\E\left[(1-U)^{3/2}\right]\E\left[ \left\| \left[\!\!
\begin{array}{cc} F_\alpha & 1- F_\alpha \\
1-F_\beta & F_\beta \end{array}\!\!\right] \right\|_{\mathrm{op}}^{3}\right]<1,
\end{align}
where $\|\,\cdot\,\|_\mathrm{op}$ denotes the operator norm of the matrix.
Here, the joint distribution of $(F_\alpha,F_\beta)$ can be chosen to minimize the left hand side of the latter inequality as follows: For $V$ uniformly distributed and independent of $U$ we set $F_\alpha = {\bf 1}_{\{V\le \alpha\}}$ and $F_\beta= {\bf 1}_{\{V\le \beta\}}$. With this choice of the joint distribution of $(F_\alpha,F_\beta)$ condition (\ref{napp1}) turns into
\begin{align*}
\frac{2}{5}\left( 2+ |\alpha-\beta|(2^{3/2}-1)\right)<1.
\end{align*}
We see that this condition is not satisfied in the whole range $\alpha+\beta-1<1/2$. Hence, in the best possible setup that we could find Theorem 4.1 in \cite{NeRu04} does not  yield results of the strength of Theorem \ref{lim22nor2}.

A second drawback of the use of multivariate recurrences is that we needed the assumption of the expansion (\ref{napp2}), which technically   is required in order to verify condition (24) in \cite{NeRu04}. Hence, after coupling $B^\mathrm{b}_n$ and $B^\mathrm{w}_n$ on one probability space such that we may satisfy (\ref{napp1}) we have to derive  asymptotic expressions for the covariance $\Cov(B^\mathrm{b}_n,B^\mathrm{w}_n)$ and to identify the leading constant in these asymptotics. Note, that this covariance is meaningless for the P\'olya urn and only emerges by artificially coupling $B^\mathrm{b}_n$ and $B^\mathrm{w}_n$. This covariance does not appear in the approach we propose in section \ref{conexp}, which makes its application much simpler compared to a multivariate formulation.

A reason why our approach of analysing systems of recurrences is  more powerful than the use of multivariate recurrences is found when comparing the spaces of probability measures where one tries to apply contraction arguments on: In section \ref{secspaces} we introduce the space
$({\cal M}^\R_s)^{\times d}$ in (\ref{defspace}) and work on subspaces where first or first and second moments of the
  probability measures are fixed. The corresponding space
  in a multivariate formulation and in Theorem 4.1
  in \cite{NeRu04} is the space ${\cal M}_s(\R^d)$ of
  all probability measures on $\R^d$ with finite
  absolute $s$-th moment. Clearly $({\cal M}^\R_s)^{\times d}$ is much smaller than ${\cal M}_s(\R^d)$, e.g.~the
  first space can be embedded into the second by
  forming product measures. This makes it plausible that
  it is much easier to find contracting maps as developed
  in section \ref{assofix} on $({\cal M}^\R_s)^{\times d}$
  than on ${\cal M}_s(\R^d)$ and we feel that this causes the problems mentioned above with a multivariate formulation.

  In the dissertation Knape  \cite[Chapter 5]{kn13} more details on our use of the contraction method and an alternative multivariate formulation are given. There, also  improved versions of Theorem 4.1 in \cite{NeRu04} are derived by a change of the underlying probability metric which lead to better conditions compared to (\ref{napp1}). However, the necessity to derive artificial covariances in a multivariate approach as discussed above could not be surmounted in \cite{kn13}. Similar
  advantages of the use of systems of recurrences over multivariate formulations were  noted in Leckey et.~al.~\cite[Section 7]{lenesz13}.



\begin{thebibliography}{20}

\bibitem{Ath69}
Athreya, K.~B. (1969) On a characteristic property of {P}\'olya's
urn.{\em Studia Sci. Math. Hungar.} {\bf 4}, 31--35.

\bibitem{atka68}
Athreya, K.B. and Karlin, S. (1968) Embedding of urn schemes into continuous time Markov branching processes
and related limit theorems. {\em Ann. Math. Statist.} {\bf 39}, 1801--1817.

\bibitem{BagPal85}
Bagchi, A. and Pal, A.K. (1985) Asymptotic Normality in the Generalized Polya–Eggenberger Urn Model, with an Application to Computer Data Structures. {\em SIAM J.
Algebraic Discrete Methods} {\bf 6}, 394--405.

\bibitem{bahu99}
Bai, Z.D. and  Hu, F. (1999). Asymptotic theorems for urn models with nonhomogeneous generating matrices.
{\em Stochastic Process. Appl.} {\bf 80}, 87--101.

\bibitem{bahuzh02}
Bai, Z.D., Hu, F. and Zhang, L.-X. (2002). Gaussian approximation theorems for urn models and their applications. {\em Ann. Appl. Probab.} {\bf 12}, 1149--1173.


\bibitem{bifr}
Bickel, P.~J. and Freedman, P.~A.  (1981)
Some asymptotic theory for the bootstrap. {\em Ann. Statist.} {\bf 9}, 1196--1217.


\bibitem{chlipo12b}
Chauvin, B., Liu, Q. and Pouyanne, N. (2012)
Support and density of the limit $m$-ary search tree distribution. 23rd Intern.~Meeting on Probabilistic, Combinatorial, and Asymptotic Methods for the Analysis of Algorithms (AofA'12)
{\em DMTCS proc {\bf AQ} 2012}, 191--200.


\bibitem{chlipo12}
Chauvin, B., Liu, Q. and Pouyanne, N. (2012)
Limit distributions for multitype branching processes of $m$-ary search trees. {\em Ann. IHP}, to appear.

\bibitem{chmapo13}
Chauvin, B., Mailler, C. and Pouyanne, N. (2013)
Smoothing equations for large P\'olya urns. Preprint available
via {\tt http://arxiv.org/abs/1302.1412}

\bibitem{chposa11}
Chauvin, B., Pouyanne, N. and Sahnoun, R. (2011) Limit distributions for large P\'olya urns. {\em Ann. Appl. Probab.} {\bf 21}, 1--32.

\bibitem{DrJaNe08}
Drmota, M., Janson, S. and Neininger, R. (2008)
 A functional limit theorem for the profile of search trees.
{\em Ann. Appl. Probab.} {\bf 18}, 288--333.

\bibitem{FiKa04}
Fill, J.A. and Kapur, N. (2004)
The Space Requirement of $m$-ary Search Trees: Distributional Asymptotics for $m \ge 27$.  Invited paper, Proceedings of the 7th Iranian Statistical Conference, 2004.
Available via \url{http://www.ams.jhu.edu/~fill/papers/periodic.pdf}

\bibitem{flgape05}
Flajolet, P., Gabarr\'o, J. and Pekari, H. (2005) Analytic urns. {\em Ann. Probab.} {\bf 33}, 1200--1233.


\bibitem{hwne02}
Hwang, H.-K. and R. Neininger (2002)
Phase change of limit laws in the quicksort recurrence under varying toll functions.
{\em SIAM J. Computing} {\bf 31}, 1687--1722.


\bibitem{Ja83}
Janson, S. (1983) Limit theorems for certain branching random walks on compact groups and homogeneous spaces. {\em Ann. Probab.} {\bf 11}, 909--930.

\bibitem{Ja04}
Janson, S. (2004) Functional limit theorem for multitype branching processes and generalized P\'olya urns.
{\em Stochastic Process. Appl.} {\bf 110}, 177--245.

\bibitem{Ja06T}
Janson, S. (2005) Limit theorems for triangular urn schemes. {\em
  Probab. Theory Related Fields} {\bf 134}, 417--452.

\bibitem{Ja06}
Janson, S. (2006) Congruence properties of depths in some random
trees. {\em Alea} {\bf 1}, 347--366.

\bibitem{Ja10}
Janson, S. (2010)
Moments of gamma type and the Brownian supremum process area.
{\em Probab. Surv.} {\bf 7}, 1--52.

\bibitem{KaJa12}
Janson, S. and Kaijser, S. (2012)
Higher moments of Banach space valued random variables.
{\em Memoirs Amer. Math. Soc.}, to appear.

\bibitem{JaNe08}
Janson, S. and Neininger, R. (2008) The size of random fragmentation trees. {\em Probab. Theory Related Fields} {\bf 142}, 399--442.

\bibitem{joko77}
Johnson, N.L. and Kotz, S. (1977) Urn models and their application. An approach to modern discrete probability theory. Wiley Series in Probability and Mathematical Statistics. {\em John Wiley \& Sons, New York-London-Sydney}.



\bibitem{kn13}
Knape, M. (2013)
P\'olya urns via the contraction method.
PhD dissertation. Submitted at the J.W.~Goethe University, Frankfurt am Main, April 2013.


\bibitem{komaro}
Kotz, S., Mahmoud, H.M. and Robert, P. (2000)
On generalized P\'olya urn models.
{\em Statist. Probab. Lett.} {\bf 49}, 163--173.

\bibitem{lenesz13}
Leckey, K., Neininger, R. and Szpankowski, W. (2013)
Towards More Realistic Probabilistic Models for Data Structures: The External Path Length in Tries under the Markov Model.
{\em Proceedings of ACM-SIAM Symposium on Discrete Algorithms (SODA)}, 877--886.

\bibitem{ma09}
Mahmoud, H.M. (2009) P\'olya urn models. Texts in Statistical Science Series. {\em CRC Press, Boca Raton, FL}.

\bibitem{maro97}
Matthews, P.C. and Rosenberger, W.F. (1997)
Variance in randomized play-the-winner clinical trials.
{\em Statistics \& Probability Letters} {\bf 35}, 233--240.

\bibitem{Ne01}
Neininger, R. (2001)
On a multivariate contraction method for random recursive structures with applications to quicksort.
{\em Random Structures  Algorithms} {\bf 19},  498--524.


\bibitem{NeRu04}
Neininger, R. and R\"uschendorf, L. (2004)
A general limit theorem for recursive algorithms and combinatorial
    structures.
{\em Ann. Appl. Probab.} {\bf 14}, 378--418.


\bibitem{NeSu12}
Neininger, R. and Sulzbach, H. (2012)
On a functional contraction method.
Preprint available via \url{http://arxiv.org/abs/1202.1370}

\bibitem{Pou05}
Pouyanne, N. (2005)
Classification of large P\'olya-Eggenberger urns with regard to their asymptotics. {\em 2005 International Conference on Analysis of Algorithms}, 275--285 (electronic),
Discrete Math. Theor. Comput. Sci. Proc., AD, {\em Assoc. Discrete Math. Theor. Comput. Sci., Nancy}.

\bibitem{Pou08}
Pouyanne, N. (2008) An algebraic approach to P\'olya processes. {\em Ann. Inst. Henri Poincar\' Probab. Stat.} {\bf 44}, 293--323.

\bibitem{RaRu95}
Rachev, S.T. and R\"uschendorf, L. (1995)
Probability metrics and recursive algorithms.
{\em Adv. in Appl. Probab.} {\bf 27}, 770--799.

\bibitem{Ro91}
R\"osler, U. (1991)
A limit theorem for ``Quicksort''. {\em RAIRO Inform. Th\'eor. Appl.} {\bf 25}, 85--100.

\bibitem{sm96}
Smythe, R.T. (1996). Central limit theorems for urn models. {\em Stochastic Process. Appl.} {\bf 65}, 115--137.

\bibitem{smro95}
Smythe, R.T. and Rosenberger, W.F. (1995). Play-the-winner designs, generalized P\'plya urns, and Markov
branching processes. In: Flournoy, N., Rosenberger, W.F. (Eds.), Adaptive Designs, IMS Lecture Notes Monograph Ser., Vol. 25. Inst. Math. Statist., Hayward, CA, pp. 13–22.




\bibitem{wedu78}
Wei, L.J. and Durham, S. (1978). The randomized play-the-winner rule in medical trials. {\em J. Amer. Statist. Assoc.}
 {\bf 73}, 840--843.

\bibitem{wesmkiya90}
Wei, L.J., Smythe, R.T., Lin, D.Y. and Park, T.S. (1990). Statistical inference with data-dependent treatment
allocation rules. {\em J. Amer. Statist. Assoc.} {\bf 85}, 156--162.

\bibitem{zo76}
Zolotarev, V.~M.  (1976)
Approximation of the distributions of sums of independent
random variables with values in infinite-dimensional
spaces. (Russian.)
{\em Teor. Veroyatnost. i Primenen.} {\bf 21}, 741--758.
Erratum {\em ibid} {\bf 22} (1977),  901.
English transl.
{\em Theory Probab. Appl.} {\bf 21}, 721--737;
{\em ibid.}
 {\bf 22}, 881.

\bibitem{zo77}
Zolotarev, V.~M.  (1977)
Ideal metrics in the problem of
approximating the distributions of sums of independent random
variables. (Russian.)
{\em Teor. Veroyatnost. i Primenen.}
{\bf 22},  449--465.
English transl.
{\em Theory Probab. Appl.}
{\bf 22},   433--449.

\end{thebibliography}

\end{document}